\renewcommand{\div}{\textrm{div}\,}
\newcommand{\BFG}{$\text{(BF)}$}
\newtheorem{theorem}{Theorem}
\newtheorem{corollary}[theorem]{Corollary}
\newtheorem{proposition}[theorem]{Proposition}
\newtheorem{lemma}[theorem]{Lemma}
\newtheorem{definition}[theorem]{Definition}
\newtheorem{rem}{Remark}
\begin{document}

\title{Existence of global strong  solutions to a beam-fluid interaction system}

\author{C. Grandmont$^*$}

\thanks{\hskip -0.5cm \noindent$^*$ Email: celine.grandmont@inria.fr\\[2pt]
INRIA Paris-Rocquencourt, Project-team REO,\\ 
Building 16, BP 105,\\ 
78153 Le Chesnay cedex, France\\[6pt]}
\author{M. Hillairet$^\dag$}\thanks{\noindent$^\dag$ Email: matthieu.hillairet@univ-montp2.fr\\[2pt]
Universit\'e Montpellier II,
France}

\date{\today}

\maketitle

\begin{abstract}
We study an unsteady non linear fluid-structure interaction problem which is a simplified model to describe blood flow through viscoleastic arteries. We consider a Newtonian incompressible two-dimensional flow described by the Navier-Stokes equations set in an unknown domain depending on the displacement of a structure, which itself satisfies a linear  viscoelastic beam equation. The fluid and the structure are fully coupled via  interface conditions prescribing the continuity of the velocities at the fluid-structure interface and the action-reaction principle. We prove that strong solutions to this problem are global-in-time. We obtain
in particular that contact between the viscoleastic wall and the bottom of the fluid cavity does not occur in finite time. 
To our knowledge, this is the first occurrence of a no-contact result, but also of existence of strong solutions globally in time, in the frame of interactions between a viscous
fluid and a deformable structure. 
\end{abstract}

\bigskip
\bigskip

\section{Introduction}

In this paper, we focus on the interactions between a viscous incompressible Newtonian fluid and a moving viscoelastic structure located on one part of the fluid domain boundary. This work is motivated by the study of blood flow in arteries and the fluid-structure interaction (FSI) model we consider here can be viewed as a simplified version of a standard model/benchmark  for FSI problems/solvers in hemodynamics \cite{QTV00}, \cite{FGNQ01}. Here we are interested in global existence result and the possibility of collapse of the arterial wall. Consequenlty, we investigate whether or not collision occurs between the moving boundary and the bottom of the fluid cavity together with the existence of global-in-time strong solutions. 

\medskip

A vast majority of studies on the existence of solution for fluid-structure interaction problems concerns the motion of a rigid solid in a viscous incompressible Newtonian fluid, whose behavior is then described by the Navier--Stokes equations (see \cite{Se87,Yu,Ho-St99,De-Es99,CSMT00,De-Es00,GM00,HOS1,SM-St-T02,Feireisl03,Ta03,Ta-Tu04}). A challenging point is the possibility of body-body or body-boundary collisions. In particular, these existence results are valid up to contact, except  those of \cite{SM-St-T02} and \cite{Feireisl03} where special weak solutions after possible collisions are built  in the 2D case and in the 3D case respectively.  The contact issue is further investigated in \cite{HES} and \cite{Hil07} where a no-collision result is proven in a bounded two-dimensional cavity.  The three-dimensional situation is also explored in \cite{HT09} and in \cite {HT10}. 
We mention that, in \cite{Sta07,SM-St-T02}, collisions, if any, are proved to occur with zero relative {\it translational} velocity as soon as the boundaries of
the rigid bodies are smooth enough and the gradient of the surrounding fluid velocity field is square integrable.  A complementary study of 
the influence of the smoothness of boundaries on the existence of collisions has been recently tackled in \cite{GVH10} and \cite {GVH12}. 
In  \cite{GVH10} it is proved that below a critical regularity of a 2D  rigid body boundary, namely $C^{1, 1/2}$, collisions may occur ; 
in \cite{GVH} and \cite{GVHW} slip boundary conditions  at the fluid-structure interface are introduced and existence of weak solutions up to collision/existence of finite time collisions are proven respectively. 

Fewer studies consider the case of an elastic structure evolving in a viscous incompressible Newtonian  flow. 
We refer the reader  to \cite{DEGLT01} and \cite{BST12} where the structure is described by a finite number of eigenmodes or to \cite{Bou07} for an artificially damped elastic structure while, for the case of a three-dimensional elastic structure interacting with a three-dimensional fluid, we mention \cite{G02,Ga-K09} in the steady state case and \cite{Cou-Shk05,Cou-Shk06,Kuk-Tu12,RV14} for the full unsteady case. In the latter, the authors consider the existence of strong solutions for small enough data locally in time.  In this field, the question of selfcontact and/or body-boundary collisions remains entirely open to our knowledge.

\medskip

 Concerning the fluid-beam, or more generally fluid-shell coupled systems, that we consider herein, the 2D/1D steady state case is considered in \cite{G98} for homogeneous Dirichlet boundary conditions on the fluid  boundaries (that are not the fluid-structure interface). Existence of a unique  strong enough solution is obtained for small enough applied forces.  In  the unsteady framework  we refer to \cite{CDEG05} where a 3D/2D fluid-plate coupled system is studied and where the structure is a damped plate in flexion. The case of an undamped plate is studied in \cite{G08}. The previous results deal with the existence of weak solutions, {\em i.e.} in the energy  spaces, and rely on the only tranversal motion of the elastic beam that enables to circumvent the lack of regularity of the fluid domain boundary (that is not even Lipschitz). These results also apply to a 2D/1D fluid-shell coupled problem which is recently considered in  \cite{MC13}. In this reference, the authors give an alternative proof of existence of  weak solutions based on ideas coming from numerical schemes \cite{GGCC09}. The existence of strong solutions for 3D/2D, or 2D/1D coupled problem involving a damped elastic structure is studied in \cite{Vei04,Lequeurre11,Lequeurre13}. The proofs  of  \cite{Lequeurre11,Lequeurre13} are based on a splitting strategy for the Stokes system and on an implicit treatment of the {\it so called} fluid added mass effect. Finally, the coupling of a 3D Newtonian fluid and a linearly elastic Koiter shell is recently studied in \cite{Len-Ru}. In this study, the mid-surface of the structure is not flat anymore and existence of weak solutions is obtained.

\medskip

In all the results we mentioned up to now, the existence of strong solutions is obtained locally in time while existence of weak solutions is obtained up to contact between the elastic structure and the fluid cavity. Consequently, if one wants to prove existence of solutions globally in time, either one should give a sense to solutions in case of collision or one should prove that no collision occurs. In this paper we investigate these issues on a  2D/1D fluid-beam coupled problem in which the beam is viscoelastic and moves only in the tranversal direction relatively to its flat mid-surface.  One already knows that a unique strong solution exists locally in time \cite{Lequeurre11}, whereas weak solutions exist as long as the beam does not touch the bottom of the fluid cavity (see \cite{CDEG05} or \cite{G08} and  \cite{MC13} in the undamped beam case).  Note that, in this model, the fluid domain is a subgraph whose regularity depends on the structure displacement. In connexion with the rigid-body case, we mention that energy estimates ensure that the beam displacement belongs to $L_t^\infty(H_x^2)$ that embeds in $L_t^\infty( C^{1, \alpha}_x)$, for $\alpha =1/2$. This corresponds precisely to the threshold exhibited in \cite{GVH10}.  In this paper, the strategy we develop is first to  prove that no contact occurs and, next to propagate  the solution regularity. In the second step, the cornerstone is an elliptic regularity result  for the inhomogeneous Stokes system valid for nonstandard regularity of the domain boundary.

\medskip

\section{General setting, main result and formal argument}
We consider a 2D container whose boundary is made of a 1D viscoelastic beam, which is a simplified linear viscolelastic Koiter shell model \cite{MC13}.  Due to the complexity of the underlying fluid-structure interaction problem we assume that only the upper part of the fluid cavity is moving.
The fluid domain denoted $\mathcal {F}(t)\subset \mathbb{R}^2$ depends then on time since it depends on the structure displacement $\eta$. It reads
$$\mathcal F(t) := \{(x,y)\in {\mathbb R}^2\,, \: x \in \mathbb (0,L)\,, \: y \in (0,h(x, t))\}\,.
$$
where $(x, t) \mapsto h(x, t)=1+\eta (x, t)$ stands for the ``deformation" of the beam. 
We assume that the fluid is two dimensional, homogeneous, viscous, incompressible and Newtonian. Its velocity-field $u$ and internal pressure $p$
satisfy the incompressible Navier--Stokes equations in $\mathcal F(t)$:
\begin{eqnarray}\label{eq_NS}
\rho_f( \partial_t u + u \cdot \nabla u )- \div \sigma(u,p) &=&0\,,  \\ 
	\label{eq_incomp}					\div u &=& 0\,.
\end{eqnarray}
The fluid stress tensor $\sigma(u,p)$ is given by the Newton law:
$$
\sigma(u,p) = \mu (\nabla u + \nabla u^{\top}) - p \text{I}_2\,. 
$$
Here $\mu$ denotes the viscosity of the fluid and $\rho_f$ its density.
The  structure motion  is given by a linear damped beam equation in flexion:
\begin{eqnarray} \label{eq_elasticite}
\rho_s\partial_{tt}h  - \beta \partial_{xx}h  + \alpha \, \partial_{xxxx}h - \gamma \, \partial_{xxt} h &=& \phi(u,p,h), \quad \mbox{on } (0,L)\,,
\end{eqnarray} 
where $\alpha,\beta,\gamma$ are non-negative given constants and $\rho_s>0$ denotes the structure density. 

\medskip

We emphasize that the beam equation is  set in  a reference configuration whereas the fluid equations
are written in Eulerian coordinates and consequently in an unknown domain. Note moreover that we choose to write the beam equation on  $h$  and not on the beam displacement $\eta=h - 1$ as is standard, since this is equivalent in the case one considers here and since it simplifies the presentation.
The fluid and beam equations are coupled through the source term $\phi(u,p,h)$ in \eqref{eq_elasticite}, which corresponds to the trace of the second component of 
$\sigma(u,p)n \text{d$l$}$ transported in the beam reference configuration.  The coupling term writes:
\begin{equation} \label{eq_phi}
\phi(u,p,h)(x, t) = - e_2 \cdot \sigma(u,p)( x, h(x, t), t) (-\partial_x h(x, t) \, e_1 + e_2) \,,  \quad (x, t)\in (0, L)\times (0, T), 
\end{equation}
where $(e_1, e_2)$ denotes the canonical basis of $\mathbb{R}^2$.
The fluid and the beam are coupled also through the kinematic condition
\begin{equation}
u( x,h(x,t), t) = \partial_t h(x,t) e_2, \, \quad (x, t)\in (0, L)\times (0, T). \label{eq_bc2}
\end{equation}
We complement our system with the following conditions on the remaining boundaries of the container:
\begin{itemize}
\item $L$-periodicity w.r.t. $x$ for the fluid and the beam;
\item no-slip boundary conditions on the bottom of the fluid container:
\begin{equation}
u(x,0,t)  =   0\,.  \label{eq_bc1}\\
\end{equation}
\end{itemize}
Note that since the question of contact is mostly a local one we assume periodic boundary conditions at the oulet for the fluid and replace the standard assumptions of ``clamped" arterial wall also by periodic boundary conditions for the beam.
\medskip

An important remark on this coupled system  is that the incompressibility condition together with  boundary conditions imply:
\begin{equation} \label{eq_contrainte_dth}
\int_{0}^{L} \partial_t h(x,t) {\rm d}x = 0\,, \quad \forall \, t >0\,.
\end{equation}
Consequently, for any classical solution $(u,h,p)$ to this system, the right-hand side of \eqref{eq_elasticite}  must have  zero mean:
$$
\int_0^L\phi(u,p,h){\rm d}x =0.
 $$
It can be achieved thanks to a good choice of the  constant normalizing the pressure which is consequently uniquely defined.  The  pressure can be then decomposed,  for instance, as 
\begin{equation}
\label{decomp:pression}
p=p_0+c,
\end{equation} where, one chooses  to impose
\begin{equation} \label{eq_conditionp0}
\int_{{\mathcal F}(t)} p_0(x, y) {\rm d}x {\rm d}y =0,
\end{equation}
and where $c$ satisfies
\begin{equation}
\label{const:pression}
c(t)=\frac{1}{L}\int_0^L e_2\cdot  (\sigma(u, p_0))(x, h(x,t), t) (-\partial_x h(x, t) \, e_1 + e_2){\rm d}x. 
\end{equation}
This constant $c$ is the Lagrange multiplier associated with the constraint \eqref{eq_contrainte_dth}.
Another mathematical way to compel the solution with this compatibility condition, without defining the physical average of the fluid pressure $c$, is to introduce the  $L^2$-projection operator on the set of $L$-periodic functions whose averages are equal to zero on (0, L), denoted  $M_s$,  and rewrite \eqref{eq_elasticite} as
\begin{equation} \label{eq_elasticite'}
\rho_s\partial_{tt} h - \beta \partial_{xx} h + \alpha  \partial_{xxxx}h - \gamma \partial_{txx} h = M_s \phi(u,p,h)\,.
\end{equation}
This is the choice made in \cite{Lequeurre11}.

\subsection{Main result}

In what follows, we call \BFG\,  the ``beam-fluid" system \eqref{eq_NS}-\eqref{eq_incomp}-\eqref{eq_elasticite}-\eqref{eq_phi}-\eqref{eq_bc2}-\eqref{eq_bc1}-\eqref{decomp:pression}-\eqref{eq_conditionp0}-\eqref{const:pression}. We study herein the \BFG\,   system, completed 
with initial conditions:
\begin{eqnarray}
h(x,0) &=& h^0(x) \,,   \qquad x \in (0, L), \,  \label{eq_ic1}\\
\partial_th(x, 0) &=& \dot{h}^0(x) \,, \qquad x \in (0, L) \,,\label{eq_ic2} \\
u(x,y,0) &=& u^0(x,y) \,, \; \;(x,y) \in \{ x \in (0, L)\,,  y \in (0,h_0(x))  \}  =: \mathcal{F}^0\,.\label{eq_ic3} 
\end{eqnarray}
Our aim is to study this Cauchy problem and, specifically, to prove the existence of  a unique global-in-time strong solution. 

\medskip

We first give some notations and definitions and make precise the functional framework.  For any given  non-negative function ${b} \in C_{\sharp}(0,L)$, {\em i.e.} the set of  continuous and $L$-periodic functions on $\mathbb R$, we define  
$$
\Omega_{b} := \{(x,y) \in \mathbb R^2 \,,\text{such that }  x\in (0, L),  y \in (0, {b}(x))\}\,.
$$ 
With this definition $ \mathcal{F}(t)=\Omega_{h(t,\cdot)}$.
We denote by ${C}^\infty_\sharp(\Omega_{b})$  the restriction on $\Omega_b$  of  $L$-periodic functions in $x$ indefinitely differentiable on 
$$
{\mathcal{ O}}_{b} =\{(x,y) \in \mathbb R^2 \,,\text{ s.t. },  y \in (0, {b}(x))\}
$$
Note that ${\mathcal {O}}_{b} +L e_1\subset {\mathcal{ O}}_{b}$ and ${\mathcal {O}}_{b}= \cup_{k\in {\mathbb Z}}\Omega_{b} +Lk e_1$. We introduce the classical spaces  $L^p_{\sharp}(\Omega_{b}) $ and $H^m_{\sharp}(\Omega_{b}) $ respectively  as the closures  of  ${C}^\infty_\sharp(\Omega_{b})$ in $L^p(\Omega_b)$ or $H^m(\Omega_b)$.  We define in the same way  $C^s_{\sharp}(0, L),$ $L^p_\sharp(0,L)$ and $H^m_\sharp(0,L)$.
 More generally, the subscript $\sharp$ stands for the periodic version in the first variable of a function space.
 We emphasize that contrary to the usual convention, we consider that time is the last variable of a function. This enables to write a unified definition for periodic functions whether they depend on one space variable only (such as the height function $h$) or two space variables (such as the velocity-field $u$). 
 Finally, we set:
$$
L^{2}_{\sharp,0} (0, L):= \left\{ f \in L^2_{\sharp}(0, L) \text{ s.t.} \int_{0}^{L} f(x){\rm d}x=0\right\}\,,
$$
and, in the same way,
$$
L^{2}_{\sharp,0} (\Omega_{b}):= \left\{ f \in L^2_{\sharp}(\Omega_{b}) \text{ s.t.} \int_{\Omega_{b}} f(x){\rm d}x=0\right\}\,,
$$
Then the projection operator  $M_s$, that is applied to equation \eqref{eq_elasticite} leading to equation \eqref{eq_elasticite'},
is the orthogonal-projector from $L^2_{\sharp}(0, L)$ onto $L^2_{\sharp,0}(0, L).$

 \medskip

We state our main result as follows
\begin{theorem} \label{thm_main}
Let us consider $\alpha>0$,  $\beta\geq 0$ and  $\gamma>0$.
Assume that the initial data $(h^0,\dot{h}^0,u^0)$  satisfy:
\begin{itemize}
\item $(h^0,\dot{h}^0) \in H_\sharp^3(0, L) \times H_\sharp^1(0, L)$, 
\item $u^0 \in H_\sharp^1(\mathcal F^0)\,,$
\item no-slip condition is fulfilled initially: 
\begin{equation}
u^0(x,0) = 0\,, \quad u^0(x,h^0(x)) = \dot{h}^0(x) \,, \quad \forall \, x \in (0, L)\,,
\end{equation}
\item no-contact and incompressibility compatibility conditions are fulfilled initially:
\begin{eqnarray} \label{eq_comp}
&& \min_{x \in \mathbb [0, L]} h^0(x) >0 \,, \quad \int_{0}^{L} \dot{h}^0(x) {\rm d}x = 0\,, \\
&& {\rm div} u^0 = 0 \quad \text{ on $\mathcal F^0$}\,.
\end{eqnarray}
\end{itemize}
Then \BFG\  has a unique global-in-time strong solution.
\end{theorem}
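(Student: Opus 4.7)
The plan is to start from the local-in-time strong solution provided by Lequeurre (cited as \cite{Lequeurre11}), which yields a unique maximal solution on some interval $[0,T^*)$ with $T^* > 0$. I would argue by contradiction: assume $T^* < \infty$. A standard continuation argument reduces the task to establishing two facts on $[0,T^*)$: (i) the minimum $\min_{x} h(x,t)$ stays bounded away from $0$ (no contact), and (ii) the strong norms of $(h,\partial_t h, u)$ in $H^3_\sharp \times H^1_\sharp \times H^1_\sharp$ remain bounded.

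First I would derive the energy identity by testing Navier--Stokes with $u$ and the beam equation with $\partial_t h$; the kinematic condition $u|_{y=h} = \partial_t h\, e_2$ makes the interface contributions cancel against $\phi(u,p,h)$. Since $\alpha > 0$ and $\gamma > 0$, this yields uniform $L^\infty_t L^2_x$ control of $u$, $L^2_t H^1_x$ control of $\nabla u$, $L^\infty_t H^2_\sharp$ control of $h$, and $L^2_t H^2_\sharp$ control of $\partial_t h$. The embedding $H^2_\sharp(0,L) \hookrightarrow C^{1,1/2}_\sharp(0,L)$ then places the fluid domain boundary precisely at the Hölder threshold identified in \cite{GVH10}, which is what makes the no-contact analysis delicate but still tractable.

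The main obstacle, in my view, is the no-contact result. Adapting the rigid-body strategy of \cite{HES,Hil07} to an elastic wall, I would look for a test velocity field $v$ whose trace on $\{y=h(x,t)\}$ coincides with $\partial_t h\, e_2$ and which is weighted by a function of the local gap $h$, chosen so that testing the momentum equation with $v$ and the beam equation with the corresponding vertical trace produces a Lyapunov inequality controlling $-\log \min_x h$. The technical heart is the construction of such a test field with sharp Sobolev estimates in the thin subdomain near a would-be contact point: a change of variables flattening $y = h(x,t)$, combined with a Bogovskii-type lifting to absorb the divergence, should give the needed bounds. Integrating this inequality together with the energy control then shows $\min_x h$ cannot reach $0$ at time $T^*$.

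Once no contact is secured, $\mathcal{F}(t)$ remains a Lipschitz subgraph uniformly on $[0,T^*)$ and one can propagate higher regularity. Here the key ingredient is the elliptic regularity result announced in the introduction for the inhomogeneous Stokes problem on domains whose upper boundary is only $C^{1,1/2}$. Writing the fluid problem as a Stokes system with right-hand side $\rho_f(\partial_t u + u\cdot\nabla u)$ and Dirichlet data $\partial_t h\, e_2$ on the top, and exploiting the parabolic character of the beam equation (thanks to $\gamma > 0$) to gain regularity on $\partial_t h$, I would set up a bootstrap in Sobolev spaces as in \cite{Lequeurre11} to get uniform $H^3 \times H^1 \times H^1$ bounds on $(h,\partial_t h, u)$ up to $T^*$. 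This contradicts the maximality of $T^*$, so $T^* = +\infty$; uniqueness is inherited directly from the local theory.
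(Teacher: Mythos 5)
Your overall skeleton (local existence with a blow-up alternative, energy estimate, no-contact, then propagation of regularity) matches the paper's scheme, but the step that carries all the difficulty --- the no-contact estimate --- is not actually supplied, and the mechanism you sketch for it would not work as described. You propose a fluid test field whose trace on $\{y=h(x,t)\}$ is $\partial_t h\,e_2$ and a beam multiplier equal to ``the corresponding vertical trace'', i.e.\ $\partial_t h$; but that is exactly the energy multiplier pair, and weighting the extension by a function of the gap only adds commutator terms to the energy identity without producing any coercive quantity that is singular as $h\to 0$. There is no indication of how a control of $-\log\min_x h$ would emerge, and indeed no one knows how to bound that quantity directly here. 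The paper's argument (Section \ref{sec_distance}, Proposition \ref{prop_distest}) uses a genuinely different multiplier pair, modeled on the Reynolds toy problem: the beam equation is tested with $-\partial_{xx}h$ and the fluid equation with the divergence-free lifting $w=\nabla^{\bot}\bigl(\partial_x h\,\chi_0(y/h)\bigr)$, whose trace is $\partial_{xx}h\,e_2$; the viscous term is then rewritten with the auxiliary pressure $q_s=6\bigl(|h(0,t)|^{-2}-|h(x,t)|^{-2}\bigr)$, which is what makes $\frac{{\rm d}}{{\rm d}t}\int_0^L \mu/h$ appear, the remainders being absorbed using $\alpha>0$, $\gamma>0$ and the stream-function estimates of Appendix \ref{app_psi}. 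Moreover the output of this estimate is only $\|h^{-1}\|_{L^\infty_t L^1_x}$ together with $h\in L^\infty_t H^2_\sharp\cap L^2_t H^3_\sharp$; an $L^1$ bound on $h^{-1}$ by itself does not exclude contact, and the pointwise lower bound requires the interpolation-type inequality of Proposition \ref{prop_controlparH2}. Both of these ideas are absent from your proposal.

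The regularity step is also under-specified at its crucial point. Feeding the Stokes elliptic estimate (Lemma \ref{lem_ellest}) with right-hand side $\rho_f(\partial_t u+u\cdot\nabla u)$ presupposes an $L^2_tL^2_x$ bound on $\partial_t u+u\cdot\nabla u$, which the energy estimate does not give; obtaining it is the content of Step 1 of Proposition \ref{prop_estreg}, where one cannot simply multiply by $\partial_t u$ (moving domain) nor by $\partial_t u+u\cdot\nabla u$ (not divergence free, so the pressure reappears), and the paper instead uses the divergence-free multiplier $\partial_t u+\hat\Lambda\cdot\nabla u-u\cdot\nabla\hat\Lambda$ in the spirit of \cite{Cum-Tak07}, paired with $\partial_{tt}h$ for the beam, plus a second estimate with multiplier $-\partial_{txx}h$ to control $\partial_t h$ in $H^{3/2}_\sharp$ before closing with Gronwall. ``A bootstrap as in \cite{Lequeurre11}'' is a reasonable slogan, but as written it hides precisely the construction that makes the argument close; so the proposal, while pointing in the right general direction, has a genuine gap at both of the paper's two main estimates.
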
 

A precise definition of what is a ``strong solution" is given in 
{Section \ref{sec_loc}}.  Our proof for {Theorem \ref{thm_main}} follows a classical scheme: 
local-in-time existence and uniqueness of solutions, blow-up alternative and {\it a priori} estimates.  
Local-in-time existence and uniqueness  of strong solutions has already been tackled in \cite{Lequeurre11} for clamped boundary conditions instead of periodic boundary conditions. In {Section \ref{sec_loc}}, we explain shortly how this result can be adapted to our functional framework with periodic lateral boundary conditions. 
 This construction leads to  the existence of a unique maximal solution for any given initial data, that blows up in finite time if and only if
the quantity 
\begin{multline} \label{eq_REG}
\mathcal C(t) := \displaystyle{\sup_{x \in [0, L]}} \dfrac{1}{h(x,t)} +  \int_{0}^{L} \left( \alpha |\partial_{xxx} h (x,t)|^2 + \gamma |\partial_{tx} h(x,t)|^2 \right) {\rm d}x  \\ +  \int_{0}^{L}\int_0^{h(x,t)} \mu|\nabla u(x,y,t)|^2 {\rm d}x{\rm d}y 
 \end{multline}
 diverges in finite time (see Corollary \ref{cor_blowup}). Note that existence of weak solutions as long as the beam does not touch the bottom of the fluid cavity can be obtained also by adapting \cite{CDEG05,G08} or \cite{MC13} to our setting.

\medskip

In this paper, the main novelty  is  the computation, for any local-in-time strong solutions to \BFG, of a new {\em a priori} estimate on $\mathcal C,$  defined by \eqref{eq_REG}.  This estimate enables us to derive a regularity estimate valid on any given time interval $(0, T)$.
We emphasize that, in order to obtain these estimates, we have to assume that $\alpha >0$ and  $\gamma >0$.  This ensures first that the elastic boundary remains regular, second that the beam dissipates energy. Whether or not these  assumptions can be dropped remains a completly open question (notice that existence of weak solutions before contact is still valid for $\alpha=\gamma=0$ and $\beta>0$).

\subsection{Formal argument}
Before considering the full Navier--Stokes/beam system, and in order to illustrate the different steps of the proof,  let us first consider a reduced model for which we derive similar bounds (at a formal level for conciseness). This coupled system writes
\begin{align}
\rho_s\partial_{tt}{b}  - \beta \partial_{xx}{b}  + \alpha \, \partial_{xxxx}{b}  - \gamma \, \partial_{xxt}  {b} =& \; q ,& \mbox{on } (0, L),  \label{eq_rho1}\\
\partial_{t} {b} =& \; \partial_x [{b}^3 \partial_{x} q], &\mbox{on } (0, L) \,, \label{eq_rho2}
\end{align}
where ${b}$ stands for the deformation of the beam (analogue of $h$) and $q$ denotes the fluid pressure.
To complete these equations, we require that  $q$ satisfies 
\begin{equation} \label{eq_rho3}
\int_{0}^{L} q(x,t) {\rm d}x = 0 \quad \forall \, t >0\,,
\end{equation}
and we impose periodic boundary conditions in $x$. We note that this system is related to \BFG\, as it can be obtained formally  by setting $h = \varepsilon {b} $ and letting $\varepsilon$ go to zero. The second equation \eqref{eq_rho2},
\emph{i.e.} the Reynolds equation, is a reduced model for the Navier--Stokes equations (completed with no-slip boundary conditions) 
in the thin film (or lubrication) regime. We refer  to \cite[Section 5.B]{Lealbook} for a detailed derivation of the Reynolds  equation.
 
\medskip

Let $({b},q)$ be a (classical) $L$-periodic solution to \eqref{eq_rho1}--\eqref{eq_rho3} on $(0,T)$ with $T>0.$ First, multiplying \eqref{eq_rho1} by $\partial_t \rho$ and combining with \eqref{eq_rho2} multiplied by $q$ yields:
$$
\dfrac{1}{2} \dfrac{\textrm{d}}{\textrm{d}t} \left[ \int_{0}^{L}  \left(\rho_s|\partial_t{b}|^2  +\beta |\partial_{x}{b}|^2 + \alpha |\partial_{xx} {b}|^2\right) \right] + \int_{0}^{L} \left(\gamma |\partial_{tx}{b}|^2 + {b}^3  |\partial_x q|^2 \right)= 0\,.
$$
We obtain that there exists a constant $C_0$ depending only on initial data for which:
\begin{multline} \label{reg_rho1}
\sup_{t  \in (0,T)} \left(\rho_s\| \partial_t {b} \, ; \, L_\sharp^2(0,L)\|^2 + \alpha\|{b}\, ; \, H_\sharp^2(0, L)\|^2 + \beta\|{b}\, ; \, H_\sharp^1(0, L)\|^2 \right)  \\
+ \int_0^T \left(\gamma\|\partial_{t}{b} \, ; \, H_\sharp^1(0, L) \|^2  +  \| {b}^{\frac 32}\partial_x q  \, ; \, L_\sharp^2(0, L)\|^2 \right)  \leq C_0\,.
\end{multline}
In what follows, $C_0$ denotes a constant depending only on the initial data but which may  vary between lines.
To obtain a lower bound on ${b}$, we multiply \eqref{eq_rho1} by $-\partial_{xx}{b}$ and integrate over $(0,L)$.  We then integrate by parts  in space.  By taking into account the periodic boundary conditions and 
\eqref{eq_rho2}, we obtain:
\begin{eqnarray*}
&\displaystyle\dfrac{{\rm d}}{{\rm d}t} \left[ \int_{0}^{L} \left( \frac{\gamma}{2} |\partial_{xx} {b}|^2 - \rho_s \partial_{t}{b} \, \partial_{xx} {b} \right) \right] 
+ \int_{0}^{L} \left( \beta |\partial_{xx}{b}|^2 + \alpha |\partial_{xxx}{b}|^2 - \rho_s  |\partial_{tx}{b}|^2\right) \\
 & = - \displaystyle\int_{0}^{L} q \partial_{xx}{b} = \int_{0}^{L} \partial_x q \partial_{x} {b} = \int_{0}^{L}{b}^3 \partial_x q \  (\frac{1}{{b}^3}\partial_{x}{b})  \\
&  = - \displaystyle\dfrac{1}{2}\int_{0}^{L} {b}^3 \partial_x q \   \partial_{x}\left[ \dfrac{1}{{b}^2}\right] 
				= \dfrac{1}{2} \int_{0}^{L} \dfrac{\partial_t{b} }{{b}^2}\,.
\end{eqnarray*} 
Finally, we deduce:
\begin{equation} \label{est_dist}
\dfrac{{\rm d}}{{\rm d}t} \left[ \int_{0}^{L} \frac{1}{2} \left( \gamma|\partial_{xx}{b}|^2 +  \frac{1}{{b}}\right)  - \int_{0}^{L}  \rho_s\partial_{t} {b} \, \partial_{xx}{b}  \right]  
+ \int_{0}^{L} \left( \beta |\partial_{xx}{b}|^2 + \alpha |\partial_{xxx} {b}|^2 \right) = \int_{0}^{L} \rho_s|\partial_{tx}{b}|^2\,.
\end{equation}
Combining with the previous bound \eqref{reg_rho1}, we get that there exists  a constant $C_0$ such that:
\begin{equation} \label{reg_rho2}
\sup_{t  \in (0,T)} \left(\gamma\|{b} \, ; \, H_\sharp^2(0,L)\|^2 + \|{b}^{-1} \, ; \, L_\sharp^1(0, L)\| \right) + \int_0^T \alpha\|{b} \, ; \, H_\sharp^3(0, L) \|^2 +\beta\|{b} \, ; \, H_\sharp^2(0, L) \|^2 \leq C_0\,.
\end{equation}
Note that, if $\rho_s\neq 0$, to obtain the previous estimate one should compute an upper bound for
$$
\int_0^T\int_{0}^{L} \rho_s|\partial_{tx} {b}|^2\,, \quad \sup_{t\in (0,T)}\int_{0}^{L}  \rho_s\partial_{t} {b} \, \partial_{xx}{b}.
$$
From \eqref{reg_rho1} these terms are bounded if $\gamma >0$ and if $\gamma$ or $\alpha$ is strictly positive respectively.   
At this point, we call a real-analysis lemma which states that there exists a continuous function $D_{min}$ for which:
$$
 \|{b}^{-1} \, ; \, L_\sharp^{\infty}(0, L)\| \leq D_{min} (\|{b} \, ; \, H_\sharp^2(0, L)\|, \|{b}^{-1} \, ; \, L_\sharp^1(0, L)\|)
$$
(see Appendix \ref{app_inequalities} for a proof). The first consequence of this inequality is that \eqref{reg_rho2} implies that ${b}$ remains away from zero
uniformly in time.  Combining this information with the dissipation estimate \eqref{reg_rho1} we obtain that there exists a constant $C_0$ for which: 
$$
\sup_{t \in (0,T)} \left( \|{b}^{-1}(\cdot,t) ; L_\sharp^{\infty}(0, L)\|\right) + \int_0^T \|q ; H_\sharp^1(0, L) \|^2 \leq C_0\,.
$$

Consequently the pressure is bounded in $L^2(0, T; H_\sharp^1(0, L))$.  Next we derive a regularity estimate  for the deformation ${b}$ by multiplying \eqref{eq_rho1} by $-\partial_{txx}{b}.$ This yields after integration by parts in space:
$$
\dfrac{1}{2}\dfrac{\rm d}{{\rm d} t} \left[ \int_{0}^{L} \rho_s |\partial_{tx}{b}|^2 + \alpha |\partial_{xxx} {b}|^2 + \beta |\partial_{xx}{b}|^2 \right]
+ \gamma \int_{0}^{L} |\partial_{txx}{b}|^2 = \int_{0}^{L} q \partial_{txx} {b}.
$$
Thanks to the $L^2(0, T; H_\sharp^1(0, L))$-bound on $q$, we reach the required estimate that  enables us to extend solutions globally in time:
\begin{multline} \label{reg_rho3}
\sup_{t \in (0,T)} \left(\alpha\|{b} ; H_\sharp^3(0, L)\|^2 +\beta\|{b} ; H_\sharp^2(0, L)\|^2 + \rho_s\|\partial_t {b} ; H_\sharp^1(0, L)\|^2  \right)  \\
+ \gamma\int_0^T  \|\partial_{t} {b} ; H_\sharp^2(0, L)\|^2 \leq C_0\,.
\end{multline}
This ends the formal proof of a no collision result, on the one hand, and of a global-in-time existence of strong solutions, on the other hand.

\medskip

In {Section \ref{sec_glob}}, we prove that comparable estimates hold true for the complete coupled system \BFG. First, considering the full beam/Navier--Stokes system, the analogue of \eqref{reg_rho1}  corresponds to the (already-known) classical decay of kinetic energy. To obtain a similar estimate to \eqref{reg_rho2}, we multiply \eqref{eq_elasticite'} by  $-\partial_{xx} h$ and   multiply \eqref{eq_NS} by a suitable extension of $-\partial_{xx} h$.  The choice of this extension is a key point of the proof (see Section \ref{sec_distance}). We  then obtain an identity similar to \eqref{est_dist} with additional remainder terms that we bound thanks to the energy estimate and for which a control of $h$  in $L^2(0, T; H_\sharp^3(0, L))\cap L^\infty(0, T ; H_\sharp^2(0, L))$, resp. a control of $\partial_th$ in $L^2(0, T; H_\sharp^1(0, L))$  is needed (hence $\alpha >0$, resp. $\gamma>0$). 
The extension of the last estimate \eqref{reg_rho3} is more involved. Indeed, when dealing with the full Navier--Stokes/beam system, we also have to control the fluid velocity-field  $u$ in $L^\infty(0, T; H_\sharp^1(\mathcal F(t)))$ (and not only the pressure field as for the toy model).
When working in cylindrical domains, such an estimate  is obtained  by multiplying \eqref{eq_NS} with $\partial_t u$ and  by applying elliptic estimates for the Stokes system  in order to  bound  the convective terms.
However,  these elliptic estimates are classically proven in $C^{1,1}$-domains or $W^{2,\infty}$-domains \cite{Bello,Galdi}. Here as $h$ is merely  $L^\infty(0, T; H_\sharp^2(0, L))$, we cannot directly apply  these standard regularity properties. So, we need to extend the elliptic results  for the Stokes system to domains which are only subgraphs of $H^2$-functions and analyze precisely the dependency of the associated  elliptic estimates with respect to the norms of $h$ (see Lemma \ref{lem_ellest}). This proof is an adaptation to a periodic framework of a lemma that can be found in \cite{Grandmontpp}. Moreover, as the fluid domain is moving with time, instead of   $\partial_t u$, we need to consider  a multiplier that takes into account  this motion. The most natural choice is $\partial_{t} u + u\cdot \nabla u.$
But, this  function is not divergence-free and consequently pressure terms appear that cannot be handled easily. To avoid this difficulty, we mimic the method used in \cite{Cum-Tak07}  in the framework of fluid/solid interactions. We introduce a divergence-free multiplier avoiding the introduction of the pressure in the regularity estimate. Moreorever this multiplier is chosen so that the associated multiplier for the structure equation is $\partial_{tt} h$. Nevertheless,  a special attention needs to be paid since  the structure motion is, once again,  less regular than when considering fluid/solid interactions.   

\medskip

The outline of this paper is as follows. In next section, we focus on the change of variables turning the beam/fluid system into a quasilinear system in a fixed geometry. 
We recall the construction of local-in-time strong solutions of  \cite{Lequeurre11}  and adapt this result to our  periodic boundary conditions framework. 
We end the section by a technical proposition dealing with elliptic estimates for the inhomogeneous Stokes system in $H^2$- subgraph domains. 
The third and last sections are devoted to the proof of {Theorem \ref{thm_main}}. They are divided into three subsections corresponding respectively to the extension of the three estimates
\eqref{reg_rho1}, \eqref{reg_rho2} and \eqref{reg_rho3} to solutions  of the coupled problem \BFG.

\section{Local-in-time strong solutions and technical lemmas.} \label{sec_loc}

In this section,  we first adapt the construction of local-in-time strong solutions of  \cite{Lequeurre11} to our periodic setting.  To this end, we will apply the following change of variables:
\begin{equation} \label{def_hat}
\hat{f}(x,z) = f(x,h(x) z )\,, \quad \forall \, (x,z) \in \Omega_1\,.
\end{equation}
To measure the regularity of such a change of variable, the following technical proposition is required:
\begin{proposition} \label{prop_cdv}
Let us consider $h \in H^2_{\sharp}(0, L)$ satisfying $\min_{x \in [0, L]} h(x) >0.$ Then for any given $m \leq 2,$
\begin{itemize}
\item the mapping  $f \mapsto \hat{f}$ defined by \eqref{def_hat} realizes a linear homeomorphism from $H^m_{\sharp}(\Omega_{h})$ onto  $H^m_{\sharp}(\Omega_1)\,,$  

\item there exists a non decreasing function $K^e_m : [0,+\infty[ \rightarrow (0,\infty)$ such that, if we assume moreover that $\|h ; H^2_{\sharp}(0, L)\| + \|h^{-1};L^{\infty}_{\sharp}(0, L)\| \leq R_0$ then there holds:
$$
\| \hat{f} ; H^m_{\sharp}(\Omega_1)\| \leq K^e_m(R_0) \|f ; H^m_{\sharp}(\Omega_{h})\| \,, \quad 
\| {f} ; H^m_{\sharp}(\Omega_h)\| \leq K^e_m(R_0) \|\hat{f} ; H^m_{\sharp}(\Omega_{1})\|\,. 
$$

\end{itemize}
 \end{proposition}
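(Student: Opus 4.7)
The plan is to prove both norm bounds by the chain rule, treating $m=0,1,2$ in increasing order of difficulty, after first reducing to smooth functions by density. Observe that $\Phi_h:(x,z)\mapsto (x,h(x)z)$ is a bijection from $\Omega_1$ onto $\Omega_h$ with inverse $\Phi_h^{-1}(x,y)=(x,y/h(x))$ and Jacobian $h(x)$, and that the 1D Sobolev embedding $H^2_\sharp(0,L)\hookrightarrow C^1_\sharp([0,L])$ together with the assumption $\|h;H^2_\sharp\|+\|h^{-1};L^\infty_\sharp\|\le R_0$ controls $\|h\|_{L^\infty}+\|h'\|_{L^\infty}+\|h^{-1}\|_{L^\infty}$ by a continuous function of $R_0$.

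For $m=0$ the change of variables $y=h(x)z$ immediately yields $\|\hat f;L^2(\Omega_1)\|^2=\int\int |f|^2 h^{-1}\,dy\,dx$, from which both estimates follow. For $m=1$ the chain rule gives
\begin{equation*}
\partial_x\hat f=(\partial_x f)\circ\Phi_h+h'(x)z\,(\partial_y f)\circ\Phi_h,\qquad \partial_z\hat f=h(x)(\partial_y f)\circ\Phi_h,
\end{equation*}
and symmetrically $\partial_y f=h^{-1}(\partial_z\hat f)\circ\Phi_h^{-1}$, $\partial_x f=(\partial_x\hat f)\circ\Phi_h^{-1}-yh'h^{-2}(\partial_z\hat f)\circ\Phi_h^{-1}$. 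Every coefficient in front of a derivative of $f$ (or $\hat f$) lies in $L^\infty$ with norm bounded by a continuous function of $R_0$ (note in particular $y h^{-2}\le \|h^{-1}\|_{L^\infty}$ on $\Omega_h$), and the claim then follows from the already-established $m=0$ estimate applied to each partial derivative.

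The main obstacle is $m=2$: differentiating again produces
\begin{equation*}
\partial_{xx}\hat f=(\partial_{xx}f)\circ\Phi_h+2h'(x)z(\partial_{xy}f)\circ\Phi_h+h'(x)^2 z^2(\partial_{yy}f)\circ\Phi_h+h''(x)z(\partial_y f)\circ\Phi_h,
\end{equation*}
together with similar expressions for $\partial_{xz}\hat f$ and $\partial_{zz}\hat f$. All terms but the last have $L^\infty$ coefficients and are controlled via the $m=0$ bound, but $h''$ only lies in $L^2_\sharp(0,L)$ and cannot be pulled out in $L^\infty$. The trick is to apply the already-proven $m=1$ bound to $u:=\partial_y f\in H^1_\sharp(\Omega_h)$, giving $\|\hat u;H^1_\sharp(\Omega_1)\|\le K^e_1(R_0)\,\|f;H^2_\sharp(\Omega_h)\|$, and then to invoke the elementary cylindrical embedding
\begin{equation*}
\sup_{x\in[0,L]}\int_0^1 |v(x,z)|^2\,dz\le C\,\|v;H^1_\sharp(\Omega_1)\|^2\qquad\forall\, v\in H^1_\sharp(\Omega_1),
\end{equation*}
which follows from the fact that $\psi(x)=\int_0^1 |v(x,z)|^2 dz$ lies in $W^{1,1}_\sharp(0,L)\hookrightarrow L^\infty_\sharp(0,L)$ with norm controlled by $\|v\|_{H^1}^2$. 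The offending term is then estimated by
\begin{equation*}
\int_{\Omega_1}|h''(x)|^2 z^2 |\hat u(x,z)|^2\,dz\,dx\le \|h'';L^2_\sharp(0,L)\|^2\sup_{x}\|\hat u(x,\cdot);L^2(0,1)\|^2\le C R_0^2\,K^e_1(R_0)^2\,\|f;H^2_\sharp(\Omega_h)\|^2.
\end{equation*}

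The reverse direction for $m=2$ is entirely analogous: the chain rule for $f=\hat f\circ\Phi_h^{-1}$ produces a single dangerous term of the form $(y h''(x)/h(x)^2)(\partial_z\hat f)\circ\Phi_h^{-1}$, which is treated identically by applying the $m=1$ reverse estimate to $\partial_z\hat f$ and invoking the same cylindrical $L^\infty_x L^2_z$ lemma. Collecting all the multiplicative constants, each a continuous and non-decreasing function of $R_0$, one assembles the function $K^e_m(R_0)$ claimed in the statement, and the density of $C^\infty_\sharp(\Omega_h)$ in $H^m_\sharp(\Omega_h)$ upgrades the estimates to the closed subspaces. The homeomorphism property is then immediate from the two-sided bound.
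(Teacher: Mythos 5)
Your proof is correct and follows essentially the same route as the paper: the only delicate point is the term $h''(x)\,z\,\widehat{\partial_y f}$ in $\partial_{xx}\hat f$, which the paper also handles by combining $h''\in L^2_\sharp(0,L)$ with the embedding $\widehat{\partial_y f}\in H^1_\sharp(\Omega_1)\hookrightarrow L^\infty_\sharp(0,L;L^2(0,1))$ coming from the tensorized structure of $H^1_\sharp(\Omega_1)$, exactly your cylindrical lemma. Your treatment of the inverse map and of the lower-order cases is the same standard chain-rule argument the paper leaves implicit.
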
 
 \begin{proof}
 The proof is standard. For $m \in \{0,1\}$ the result easily derives from the fact that $h \in W^{1,\infty}(0,L)$ is bounded from below by a strictly positive constant. 
 For $m=2,$ the key point is that the motion of the upper boundary is tranverse only so that we combine the regularity of $h$ with the following tensorization of the space $H_{\sharp}^1(\Omega_1):$
$$
H_{\sharp}^1(\Omega_1) = H_{\sharp}^1((0, L)\times(0,1))=L_{\sharp}^2(0, L; H^1(0, 1))\cap H_{\sharp}^1(0, L; L^2(0, 1)).$$
The most delicate point enters the computation of  $\|\partial_{xx} \hat{f} ; L^2_{\sharp}(\Omega_1)\|.$ We have: 
$$
\partial_{xx} \hat{f} = \widehat{\partial_{xx} f }+ 2h'z \widehat{\partial_{xy} f }+ h^{''} z \widehat{\partial_{y} f }+ (h'z)^2 \widehat{\partial_{yy}f}\,,
$$
in which the worst term is  $h'' z \widehat{\partial_{y} f }.$  It is bounded in $L_{\sharp}^2(\Omega_1)$ since $h''\in L_{\sharp}^2(0, L)$ and
$$
\widehat{\partial_{y} f } \in H_{\sharp}^1(\Omega_1)\hookrightarrow L_{\sharp}^\infty(0, L; L^2(0, 1))\cap L_{\sharp}^2(0, L; L^\infty(0, 1)).
$$ 
\end{proof}

\subsection{Construction of local-in-time solutions}
As explained previously, local-in-time existence and uniqueness of strong solutions to \BFG\, are tackled in \cite{Lequeurre11} with no normalizing condition for the pressure (\eqref{decomp:pression}-\eqref{eq_conditionp0}-\eqref{const:pression} is replaced with \eqref{eq_elasticite'}),  with homogeneous Dirichlet boundary conditions for the fluid velocity on the part of the boundary that is not elastic and with ``clamped" boundary conditions for the structure. Namely, instead of periodic boundary conditions, the displacement $\eta=h-1$ satisfies 
$$
\eta(0,t) = \eta(L,t) = \partial_x\eta(0,t) = \partial_x \eta(L,t)=0\,, \quad \forall \, t \in (0,T)\,.
$$
The proof of existence of solutions follows a classical method, also introduced  in  \cite{GM00,Ta03} when
dealing with fluid/solid interactions. To look for solutions on a time-interval $(0,T),$  new unkowns $(\hat{u},\hat{p})$ are first introduced applying the transformation \eqref{def_hat}:
\begin{equation} \label{def_up}
u(x,y,t) = \hat{u} \left(x, \dfrac{y}{h(x,t)},t\right), \quad p(x,y,t) = \hat{p}\left(x,\dfrac{y}{h(x,t)},t\right),\, \quad (x, y)\in {\mathcal F}(t).
\end{equation}
These new velocity-field and pressure $(\hat{u}, \hat{p})$ are defined in the cylindrical domain $\Omega_1 \times (0,T)$ and $(u,p,h)$ is
a solution to  \eqref{eq_NS}-\eqref{eq_incomp}-\eqref{eq_elasticite}-\eqref{eq_phi}-\eqref{eq_bc2}-\eqref{eq_bc1}-\eqref{eq_elasticite'} if and only if the triplet $(\hat{u},\hat{p},h )$ is solution to a coupled  system of quasilinear pdes that we  choose not to write here for  the sake of conciseness. The core of the existence and uniqueness result is the study of this nonlinear system. First, the author analyzes, {\em via} a semi-group approach, the resolution of the linear system obtained by linearizing around $\eta=0$ (or $h=1$), $\hat{u} = 0,\hat{p}=0.$  This study is based on an accurate treatment of the added mass effect of the fluid on the structure through an appropriate splitting of the fluid load. Then, the nonlinear terms are estimated and the author proves that they remain small for a small time. The local-in-time existence and uniqueness of a solution to the system of nonlinear pdes is finally obtained by a standard fixed point argument. 

\medskip

In our periodic framework, computation of nonlinearities might be reproduced without change while the semi-group approach might be adapted in the spirit of \cite{Lequeurre13}. 
Consequently,  for any initial data such that
\begin{equation} \label{eq_regid}
h^0 \in H^3_{\sharp}(0, L), \quad \dot{h}^0 \in H^1_{\sharp}(0, L), \quad u^0 \in H^1_{\sharp}(\mathcal F^0)\,,
\end{equation}
and satisfying  the compatibility conditions:
\begin{eqnarray} \label{eq_compic1}
&&   \min_{x \in  [0, L]} h^0(x) >0\,, \qquad \int_{0}^{L} \dot{h}^0(x) {\rm d}x = 0\,,\\
&& {\rm div}\, u^0 =0\,,   \quad \text{ on $\mathcal F^0\,,$}\label{eq_compic11}
 \end{eqnarray}
\begin{eqnarray}  \label{eq_compic2}
u^0(x,h^0(x)) &=& \dot{h}^0(x) e_2, \quad  x \in (0, L)\,,\\
u^0(x,0) &=& 0, \quad  \phantom{12443,} x \in (0, L)\,, \label{eq_compic3}
\end{eqnarray}
we obtain local-in-time existence and uniqueness of a strong solution $(\hat{u},\hat{p},h)$ to the Cauchy problem associated with the translation of
 \eqref{eq_NS}-\eqref{eq_incomp}-\eqref{eq_elasticite}-\eqref{eq_phi}-\eqref{eq_bc2}-\eqref{eq_bc1}-\eqref{eq_elasticite'} in  a fixed geometry, completed with periodic boundary conditions. 
 The solution verifies:
\begin{eqnarray} \label{reg_hatu}
&& \hat{u} \in H^1(0,T; L^2_{\sharp}(\Omega_1))  \cap C([0,T] ; H^1_{\sharp}(\Omega_1)) \cap L^2(0,T ; H^2_{\sharp}(\Omega_1))\,,
\\\label{reg_hatp}
&&
\hat{p} \in L^2(0,T ; H^1_{\sharp}(\Omega_1)),   
\\ \label{reg_h}
&& h \in H^2(0,T ; L^2_{\sharp}(0, L)) \cap L^2(0,T; H^4_{\sharp}(0, L))\,, \quad h^{-1} \in L^{\infty}((0,T) \times (0,L))
\end{eqnarray}
We emphasize that,  following the proof of \cite{Lequeurre11}, the pressure $\hat{p}$ is defined up to a constant for now. 
The regularity statement \eqref{reg_h} ensures that the function $h$ is Lipschitz on  $ [ 0,T ] \times (0,L)$. Moreover, since $h$ satisfies also 
\begin{equation} \label{hyp_h}
\dfrac{1}{h} \in W^{1, \infty}((0,T) \times (0,L)),
\end{equation}
we obtain that the domain $\mathcal{F}(t)$ and the non cylindrical domain defined by:
\begin{eqnarray*}
\mathcal{Q}_t &:= &\{ (x,y,s), \quad x \in (0, L), \quad t \in (0,t)\,,  \quad y \in (0,h(s,x)) \}\,, \quad \forall t\leq T,
\end{eqnarray*}
are both Lipschitz open subsets of $\mathbb R^2$ and $\mathbb R^3$ respectively.

\medskip

Going back to the moving domain by inverting the transformation \eqref{def_up}, we define strong  solutions  of   \BFG\  as follows
\begin{definition} \label{def_strongsolution}
Let the initial data $(h^0,\dot{h}^0,u^0) \in H^3_{\sharp}(0, L) \times H^1_{\sharp}(0, L)  \times H^1_{\sharp}(\mathcal{F}^0)$ satisfy  \eqref{eq_compic1}-\eqref{eq_compic11}-\eqref{eq_compic2}-\eqref{eq_compic3} and let $T>0.$
A strong solution to \BFG\ on $(0,T),$ associated with  the initial data $(h^0,\dot{h}^0,u^0),$   is a quadruplet $(h,u,p,c)$ satisfying:
\begin{itemize}
\item $h,$ $u,$ $p$ and $c$ have the following regularity:
\begin{equation}\label{reg_h2}
h \in H^2(0,T ; L^2_{\sharp}(0, L)) \cap L^2(0,T; H^4_{\sharp}(0, L)),\,h^{-1} \in L^{\infty}(0,T) \times (0,L)),
\end{equation}
\begin{equation}\label{reg_up}
u \in H^1_{\sharp}(\mathcal Q_T)\,,  \quad \nabla^2 u \in L^2_{\sharp}(\mathcal Q_T)\,, \quad
c \in L^2(0,T)\,, \quad \nabla p \in L^2_{\sharp}(\mathcal Q_T)\,,
\end{equation}
\item equations \eqref{eq_NS}-\eqref{eq_incomp} are satisfied a.e. in $\mathcal Q_T,$
\item equations \eqref{decomp:pression}-\eqref{eq_conditionp0}-\eqref{const:pression} are satisfied a.e. in $(0, T),$
\item equations \eqref{eq_elasticite}-\eqref{eq_bc2}-\eqref{eq_bc1} are satisfied a.e. in $(0, T)\times (0, L),$
\item equations \eqref{eq_ic1}-\eqref{eq_ic2}-\eqref{eq_ic3} are satisfied a.e. in $(0, L)$ and $\mathcal F^0\,.$
\end{itemize}
\end{definition}

\medskip

We emphasize that the pressure $p$ in our definition is completely fixed and that $u$ is a time-space function. Hence, the condition $u \in H^1_{\sharp}(\mathcal Q_T)$ involves both time and space derivatives of $u$, whereas $\nabla p$ involves
space derivatives only.  The construction that we describe above, adapted from \cite{Lequeurre11},  yields the following existence and uniqueness theorem: 
\begin{theorem}  \label{thm_loc}
Let us consider $\alpha >0$,  $\beta\geq 0$  and $\gamma>0$. Assume that the initial data $(h^0,\dot{h}^0,u^0) $ belong to $ H^3_{\sharp}(0, L) \times H^1_{\sharp}(0, L) \times H^1_{\sharp}(\mathcal{F}^0)$  
and  satisfy  the compatibility conditions \eqref{eq_compic1}-\eqref{eq_compic11}-\eqref{eq_compic2}-\eqref{eq_compic3}.
There exists $T_0>0$ such that for any $0<T<T_0,$ there exists a unique strong solution to \BFG\ on $(0,T).$
\end{theorem}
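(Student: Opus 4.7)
The plan is to adapt the construction of \cite{Lequeurre11} to the periodic setting, essentially replicating Lequeurre's scheme step by step but on the periodic cylinder. First, I would apply the change of variables \eqref{def_hat} to recast the problem on the fixed reference geometry $\Omega_1$: the unknowns become $(\hat u,\hat p, h)$ with $\hat u$ and $\hat p$ defined on $\Omega_1$ through \eqref{def_up}, and the Navier--Stokes equations \eqref{eq_NS}--\eqref{eq_incomp} turn into a system of quasilinear PDEs with variable coefficients depending on $h$, $\partial_x h$, $\partial_{xx} h$ and $1/h$. Proposition \ref{prop_cdv} ensures that this change of variables is well controlled in $H^m_\sharp$, $m\le 2$, as long as we have the a priori bound $\|h;H^2_\sharp(0,L)\|+\|h^{-1};L^\infty_\sharp(0,L)\|\le R_0$.

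Next, I would study the linearization around $(\hat u,\hat p,\eta)=(0,0,0)$, which couples a Stokes system on $\Omega_1$ (with no-slip at $z=0$, kinematic continuity at $z=1$, and $L$-periodicity in $x$) to the linear damped beam equation \eqref{eq_elasticite'} via the transported fluid load. The core technical point, as in \cite{Lequeurre11}, is to handle the added mass effect: the Dirichlet-to-Neumann-type operator mapping $\partial_t\eta$ to the normal component of the fluid stress acts at the same order as the beam inertia $\rho_s\partial_{tt}\eta$. I would perform the standard splitting of $\hat p$ into an ``added mass'' part, solving a Neumann problem with inhomogeneity prescribed by $\partial_{tt}\eta$, and a regular remainder; absorbing the added mass contribution into the effective structure operator yields a dissipative generator on the energy space, to which a semigroup approach applies. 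For the periodic adaptation, this generator is defined exactly as in \cite{Lequeurre13}, and the projector $M_s$ enters to enforce the compatibility \eqref{eq_contrainte_dth} and to identify the Lagrange multiplier $c$ via \eqref{const:pression}. Solvability of the linear inhomogeneous problem in the energy class \eqref{reg_hatu}--\eqref{reg_h} with periodic boundary conditions follows from standard parabolic regularity for the Stokes part together with the higher regularity of the beam equation driven by $\gamma>0$ and $\alpha>0$.

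With the linear theory in hand, I would close the argument by a Banach fixed-point on a short time interval $(0,T_0)$. All nonlinear terms produced by the change of variables are at least quadratic in $(\hat u - \hat u^0,\eta)$ or multiplied by factors involving $(\partial_x h,\partial_{xx} h,h^{-1})$ that are controlled by Proposition \ref{prop_cdv}; classical interpolation between \eqref{reg_hatu} and \eqref{reg_h} shows that their norms on $(0,T)$ vanish as $T\to 0$, so the affine map sending a candidate triple to the solution of the linearized system with frozen nonlinear source is a contraction on a small ball, provided $T_0$ is chosen in terms of $\|h^0;H^3_\sharp\|$, $\|\dot h^0;H^1_\sharp\|$, $\|u^0;H^1_\sharp(\mathcal F^0)\|$ and $\min h^0>0$. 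Uniqueness follows from the same estimates applied to the difference of two solutions, combined with Gr\"onwall's inequality.

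The main obstacle I expect is the rigorous construction of the semigroup for the linearized coupled problem under periodic lateral boundary conditions, namely checking that the functional setting used in \cite{Lequeurre11,Lequeurre13} transfers verbatim once one works with $H^m_\sharp$ spaces and the projector $M_s$. A secondary but technical point is ensuring that the compatibility conditions \eqref{eq_compic1}--\eqref{eq_compic3}, in particular the periodicity of the initial data and the zero-mean condition on $\dot h^0$, are propagated by the fixed-point iteration so that the Lagrange multiplier $c$ is well defined at each step and the decomposition \eqref{decomp:pression} genuinely characterizes the pressure.
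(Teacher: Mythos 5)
Your proposal is correct and follows essentially the same route as the paper: adapt the construction of \cite{Lequeurre11} (semigroup treatment of the linearized system with the added-mass splitting, in the spirit of \cite{Lequeurre13} for the periodic setting, then a fixed point for the nonlinearities), transfer between the fixed and moving geometries via Proposition \ref{prop_cdv}, and fix the pressure constant $c$ through \eqref{const:pression}. The only nuance is that the paper delegates the semigroup/fixed-point machinery entirely to the cited references and devotes its written proof to the two points you flag at the end, namely the equivalence of the regularity classes in the two geometries (using \cite[Theorem 3.1]{LM72} to recover $\hat u \in C([0,T];H^1_{\sharp}(\Omega_1))$) and the identification of the Lagrange multiplier $c$, which you also address.
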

\begin{proof}
In the whole proof initial data $(h^0,\dot{h}^0,u^0)$ are fixed. The only points that we want to make clear here are
\begin{itemize}
\item the link between the regularity \eqref{reg_hatu}-\eqref{reg_hatp}-\eqref{reg_h} of the solution $(\hat{u},\hat{p},h)$ to the nonlinear system in a fixed geometry and
the regularity statements of  {Definition \ref{def_strongsolution}}; 
\item the computation of the Lagrange multiplier $c$ that we introduce here and that does not appear in \cite{Lequeurre11,Lequeurre13}.
\end{itemize} 

\medskip

Let $(\hat{u},\hat{p},h)$  be the solution to  \eqref{eq_NS}-\eqref{eq_incomp}-\eqref{eq_elasticite}-\eqref{eq_phi}-\eqref{eq_bc2}-\eqref{eq_bc1}-\eqref{eq_elasticite'} written in a fixed geometry, that one constructs adapting the
arguments of \cite{Lequeurre11,Lequeurre13} as explained in introduction. The deformation $h(t,\cdot)$ and its inverse $1/h(t,\cdot)$ are then uniformly (w.r.t. $t \in [0,T]$) bounded in  $H^2_{\sharp}(0, L)$ and $L^{\infty}_{\sharp}(0, L)$.  So, we construct $(u,p)$
{\em via} \eqref{def_hat} and apply Proposition \ref{prop_cdv}  to obtain a fluid velocity and a fluid pressure
 that satisfy their contribution to \eqref{reg_up}.  Noting that, in the aforementioned construction, $p$ is defined up to a time-dependent constant and that $\phi(u,p+c,h) = \phi(u,p,h)+c,$ we fix $p$ by requiring further that:   
\begin{equation}
\int_{0}^{L} \phi(u,p,h) {\rm d}x = 0\,, \quad \forall \, t \in (0,T)\,.
\end{equation}
We then write $p= p_0+c$ with $p_0$ satisfying \eqref{eq_conditionp0} and with $c$ being fixed by \eqref{const:pression}. 
Due to the regularity of  $h,$ ${u}$ and $p_0$ we obtain finally that $c$ belongs to $L^2(0, T)$.

\medskip

Conversely,  for any given strong solution $(u,p,h,c)$  of \BFG\,  in the sense of Definition \ref{def_strongsolution}, we  construct $(\hat{u},\hat{p})$ 
by \eqref{def_up} and refer to  Propostion \ref{prop_cdv} again yielding that:
$$
\hat{u} \in H^1(0,T;L^2_{\sharp}(\Omega_1)) \cap L^2(0,T;H^2_{\sharp}(\Omega_1)),
\quad 
\hat{p} \in L^2(0,T ; H^1_{\sharp}(\Omega_1))\,.
$$
We apply then  \cite[Theorem 3.1]{LM72}  and deduce that $\hat{u} \in C([0,T] ; H^1_{\sharp}(\Omega_{1}))$ 
and get that, for $T$ small enough, $(\hat{u},\hat{p},h)$ is the unique solution to 
\eqref{eq_NS}-\eqref{eq_incomp}-\eqref{eq_elasticite}-\eqref{eq_phi}-\eqref{eq_bc2}-\eqref{eq_bc1}-\eqref{eq_elasticite'} written in a fixed geometry, as constructed by adapting the arguments of \cite{Lequeurre11,Lequeurre13}.
\end{proof}
\begin{rem}
\label{rem:reg_sup}
From the regularity we just derived for $\hat u$, we deduce that, for any strong solution $(u, p, h)$ the mapping $t\mapsto \int_{{\mathcal F}(t)} |\nabla u|^2(t)$ belongs to $C^0([0, T])$. 
\end{rem}
Finally, we obtain that \BFG\, is wellposed locally in time.
Following \cite{Lequeurre11}, it appears that we might choose the time $T_0$ in {Theorem \ref{thm_loc}}  to be fixed by   
$$
\|h^0 ;  {H^3_{\sharp}(0,L)} \| + \| \dot{h}^0 \: ; \:  H^1_{\sharp}(0, L) \| + \|{h}^{-1} \: ; \: L^{\infty}_{\sharp}(0, L)\|   + \|{u}^0 ; H^1_{\sharp}(\mathcal F^0)\|
$$
only  (see the computation of $T_0$ at item $(i)$, page 408). Then the following blow-up alternative can be classically stated:
\begin{corollary} \label{cor_blowup}
Let  $\alpha>0$,  $\beta\geq 0$ and  $\gamma>0$ be given. Assume that the initial data $(h^0,\dot{h}^0,u^0)$ belong to $H^3_{\sharp} (0, L)\times H^1_{\sharp}(0, L) \times H^1_{\sharp}(\mathcal{F}^0)$ and  satisfy the  compatibility conditions \eqref{eq_compic1}-\eqref{eq_compic11}-\eqref{eq_compic2}-\eqref{eq_compic3}.  Then \BFG \ completed with initial conditions \eqref{eq_ic1}-\eqref{eq_ic3} has a unique non-extendable strong solution $(T^*,(u,p,h,c)).$ Furthermore, we have the following alternative:\\[-8pt]
\begin{enumerate}
\item[$(i)$] either $T^* = +\infty$\\[-8pt]
\item[$(ii)$]  either $T^* < \infty$ and 
$$
\limsup_{t \to T^*}  \|h(\cdot,t) ;  {H^3_{\sharp}(0,L)} \| +  \| \partial_t{h}(\cdot,t) \: ; \:  H^1_{\sharp}(0, L) \| + \|{h^{-1}(\cdot,t)} \: ; \: L^{\infty}_{\sharp}(0, L)\| +  \|u(\cdot,t); H^1_{\sharp}(\mathcal F(t)) \| = +\infty\,.
$$
\end{enumerate}
\end{corollary}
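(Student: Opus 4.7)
The plan is the standard maximal-extension argument from semilinear/quasilinear PDE theory, relying crucially on the fact (pointed out just before the statement) that the local existence time $T_0$ provided by Theorem \ref{thm_loc} depends on the initial data only through the quantity
\[
\mathcal{N}(h_0,\dot{h}_0,u_0) := \|h_0\|_{H^3_\sharp(0,L)} + \|\dot h_0\|_{H^1_\sharp(0,L)} + \|h_0^{-1}\|_{L^\infty_\sharp(0,L)} + \|u_0\|_{H^1_\sharp(\mathcal F^0)}.
\]
First I would define $T^*$ as the supremum of all $T>0$ such that a strong solution exists on $(0,T)$ in the sense of Definition \ref{def_strongsolution}; by Theorem \ref{thm_loc} the set is non-empty. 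Uniqueness on every compatible sub-interval lets me paste these solutions into a single, well-defined solution $(u,p,h,c)$ on $[0,T^\ast)$.

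Suppose then that $T^\ast<\infty$ but that the quantity
\[
\mathcal{M}(t):=\|h(\cdot,t)\|_{H^3_\sharp} + \|\partial_t h(\cdot,t)\|_{H^1_\sharp} + \|h^{-1}(\cdot,t)\|_{L^\infty_\sharp} + \|u(\cdot,t)\|_{H^1_\sharp(\mathcal F(t))}
\]
satisfies $\limsup_{t\to T^\ast}\mathcal{M}(t) \le M < \infty$. To invoke Theorem \ref{thm_loc} at a late time $t_0<T^\ast$, I need the trace $(h(\cdot,t_0),\partial_t h(\cdot,t_0),u(\cdot,t_0))$ to (a) live in the right spaces and (b) satisfy the compatibility conditions \eqref{eq_compic1}--\eqref{eq_compic3}. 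For (a), the regularity in \eqref{reg_h2}--\eqref{reg_up}, transported through the change of variable \eqref{def_up} and Proposition \ref{prop_cdv}, gives $h\in C([0,T^\ast);H^3_\sharp)$, $\partial_t h\in C([0,T^\ast);H^1_\sharp)$ and, via $\hat u\in C([0,T^\ast);H^1_\sharp(\Omega_1))$ by Lions--Magenes, $u(\cdot,t)\in H^1_\sharp(\mathcal F(t))$ for every $t<T^\ast$. For (b): positivity of $h(\cdot,t_0)$ follows from the bound on $h^{-1}$; the mean-zero condition on $\partial_t h$ follows by integrating $\mathrm{div}\,u=0$ over $\mathcal F(t_0)$ together with the boundary conditions \eqref{eq_bc1}--\eqref{eq_bc2}; the divergence-free constraint and the two boundary conditions on $u$ are part of the equations, hence preserved in time.

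Applying Theorem \ref{thm_loc} with $(h(\cdot,t_0),\partial_t h(\cdot,t_0),u(\cdot,t_0))$ as initial datum produces a strong solution on a time interval of length at least $T_0(M)>0$, which depends only on the bound $M$. Choosing $t_0$ close enough to $T^\ast$ so that $T^\ast - t_0 < T_0(M)/2$, the new solution is defined on $[t_0,t_0+T_0(M))$ with $t_0+T_0(M)>T^\ast$. Uniqueness on $[t_0,T^\ast)$ lets me concatenate it with the maximal solution, yielding a strong solution on $[0,t_0+T_0(M))$ and contradicting the definition of $T^\ast$. Hence either $T^\ast=+\infty$, or $T^\ast<\infty$ and $\limsup_{t\to T^\ast}\mathcal{M}(t)=+\infty$.

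The main subtle point is the rigorous propagation-in-time of the compatibility conditions and of the $H^3_\sharp\times H^1_\sharp\times H^1_\sharp$ regularity at the ``restart time'' $t_0$. The regularity part rests on the Lions--Magenes-type trace-in-time results applied in the fixed geometry $\Omega_1$, which are then pulled back to $\mathcal F(t_0)$ via Proposition \ref{prop_cdv}; the compatibility part is mostly automatic because the relevant kinematic identities are exactly the equations satisfied in the strong sense. Everything else is a clean maximality / uniform-restart-time argument made possible by the explicit form of the dependence of $T_0$ on the initial norms.
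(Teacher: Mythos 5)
Your proposal is correct and follows exactly the route the paper has in mind: the paper states the alternative as ``classical'' precisely because, as it notes just before the corollary, the local existence time $T_0$ of Theorem \ref{thm_loc} depends only on $\|h^0;H^3_\sharp\|+\|\dot h^0;H^1_\sharp\|+\|(h^0)^{-1};L^\infty_\sharp\|+\|u^0;H^1_\sharp(\mathcal F^0)\|$, and your restart-and-concatenate argument (with the time-continuity of the norms via Lions--Magenes and Proposition \ref{prop_cdv}, and the propagation of the compatibility conditions) is the standard proof being invoked. You simply spell out the details the paper leaves implicit; there is no gap.
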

The aim of {Section \ref{sec_glob}} is to prove that the second alternative $(ii)$ never holds  and consequently that the solution is defined on any finite time inetrval $(0 ,T)$. But before going any further we focus on the elliptic regularity properties of the inhomogenous Stokes system in a subgraph domain.

\subsection{Elliptic estimate}
\label{subsection:elliptique}
In this subsection we derive elliptic estimates for the inhomogeneous Stokes problem in a domain $\Omega_h$ with $h \in H^2_{\sharp}(0, L)$ such that $h^{-1} \in L^{\infty}_{\sharp}(0, L)$.  With this regularity, the domain  is neither $C^{1, 1}$ nor $W^{2,\infty}$ and one cannot apply standard elliptic regularity results. Nevertheless, we take advantage here of the fact that $\Omega_h$ is a subgraph so that the change of variable transforming $\Omega_h$ into a flat domain (namely $\Omega_1$)
can be chosen to be smooth in the transverse variable (see $\chi_h$ below). This remark enables us to extend the classical method with  $h$ belonging merely to $H^2_{\sharp}(0, L).$ Such an estimate is a key argument in the derivation of the regularity estimates for the solution of the non linear system \BFG.

\medskip

For simplicity, we fix $\mu=1$ in this part.
Let us consider source terms  and  a boundary condition 
$$
(f, g) \in L^2_{\sharp}(\Omega_{h})\times H^1_\sharp(\Omega_h)), \quad \dot{\eta}\in H^{\frac 32}_{\sharp}(0, L).
$$
We aim at studying the regularity properties of $L$-periodic (w.r.t. $x$) solutions  to 
\begin{eqnarray}
-{\Delta} u + \nabla p_0 &=& f\,, \quad \text{in }\Omega_{h}, \label{eq_stokes1}\\
{\rm div}\, u &=& g\,, \quad \text{in } \Omega_{h},  \label{eq_stokes2}
\end{eqnarray}
completed with boundary conditions:
\begin{eqnarray}
u(x,h(x)) &=& \dot{\eta}(x) e_2 \,, \phantom{0 \,,} \quad \forall \, x \in (0, L)\,, \label{eq_stokesbc1}\\
u(x,0)&=& 0 \,, \phantom{\dot{h}(x)\,,} \quad \forall \, x \in (0, L)\,. \label{eq_stokesbc2}
\end{eqnarray}
Integrating ${\rm div} \, u = g$ over $\Omega_{h}$ implies that the  boundary velocity $\dot{\eta}$ has to satisfy
\begin{equation} \label{eq_stokescomp}
\int_{0}^{L} \dot{\eta}(x){\rm d}x = \int_{\Omega_h} g(x,y){\rm d}x{\rm d}y\,.
\end{equation}
The left-hand side of  \eqref{eq_stokescomp} does not involve the deformation $h$, because the deformation as well as the boundary velocity are vertical.
In what follows, we restrict to data $g \in L^2_{\sharp,0}(\Omega_h)$ and $\dot{\eta} \in L^2_{\sharp,0}(0,L)$ for which \eqref{eq_stokescomp} is clearly
satisfied.
\begin{rem}
Note that, for this inhomogeneous Stokes problem, with Dirichlet boundary conditions,  the pressure $p_0$ is defined up to a constant. Consequently, we enforce uniqueness of the pressure by imposing:
 $$
 \int_{\Omega_h} p_0(x, y) {\rm d}x{\rm d}y =0.
 $$
\end{rem}
The main result of this section writes
\begin{lemma} \label{lem_ellest}
For any $h \in H^2_{\sharp}(0, L)$ such that $h^{-1} \in L^{\infty}_{\sharp}(0, L),$  source terms and boundary condition 
$$
(f, g) \in L^2_{\sharp}(\Omega_{h})\times (H^1_\sharp(\Omega_h)\cap L^2_{ \sharp, 0}(\Omega_h)), \quad \dot{\eta}\in  H^{\frac 32}_{\sharp}(0,L) \cap L^2_{\sharp,0}(0,L),
$$ 
there exists a unique solution $(u,p_0) \in H^2_{\sharp}(\Omega_{h}) \times (H^1_{\sharp}(\Omega_h)\cap L^2_{ \sharp, 0}(\Omega_h))$ to the Stokes system \eqref{eq_stokes1}-\eqref{eq_stokes2}-\eqref{eq_stokesbc1}-\eqref{eq_stokesbc2}. 
 Moreover, there exists a non-decreasing function $K^s : [0,\infty) \rightarrow (0,\infty)$ such that, if we assume $\|h ; H^2_{\sharp}(0, L)\| + \|h^{-1} ; L^{\infty}_{\sharp}(0, L)\| \leq R_0$ then, this solution satisfies:
\begin{equation} \label{eq_ellest}
\|u ; H^2_{\sharp}(\Omega_h)\| + \| p_0 ; H^1_{\sharp}(\Omega_h)\| \leq K^s(R_0)\left( \|f ; L^2_{\sharp}(\Omega_h)\| +\|g ; H^1_{\sharp}(\Omega_h)\| + \|\dot{\eta} ; H^{\frac 32}_{\sharp}(0, L)\| \right)\,.
\end{equation}

\end{lemma}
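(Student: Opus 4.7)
The plan is to proceed in three stages: weak existence in $\Omega_h$, transport to the flat cylinder $\Omega_1$, and a bootstrap to $H^2 \times H^1$-regularity on $\Omega_1$, the last being the main technical work.

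First, since $h \in H^2_\sharp(0,L) \hookrightarrow C^1_\sharp(0,L)$ and $h^{-1}\in L^\infty_\sharp$, the subgraph $\Omega_h$ is $C^1$-Lipschitz with geometric constants depending only on $R_0$. A Bogovskii-type lift on $\Omega_h$, whose norm is controlled by $R_0$, absorbs both the boundary datum $\dot\eta e_2$ and the divergence source $g$, the compatibility condition \eqref{eq_stokescomp} being assumed. The reduced homogeneous Stokes problem is solved by Lax--Milgram on divergence-free fields, and the pressure recovered via De Rham together with the inf--sup condition on $\Omega_h$. This yields a weak solution $(u,p_0) \in H^1_\sharp(\Omega_h) \times L^2_{\sharp,0}(\Omega_h)$ with the expected $R_0$-dependent $H^1 \times L^2$-energy bound.

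Second, I would transport the weak formulation to $\Omega_1$ via the change of variables $\chi_h(x,z)=(x,h(x)z)$ of Proposition~\ref{prop_cdv} and set $\hat u = u \circ \chi_h$, $\hat p_0 = p_0 \circ \chi_h$. After multiplication by the Jacobian $h(x)$ the equations become
\begin{equation*}
a_h(\hat u, \hat v) - b_h(\hat v, \hat p_0) = \langle \hat F, \hat v\rangle_{\Omega_1}, \quad b_h(\hat u, \hat q) = \langle \hat G, \hat q\rangle_{\Omega_1},
\end{equation*}
where the bilinear forms $a_h, b_h$ involve only $h$, $h'$ and $1/h$ as coefficients: \emph{no second derivative of $h$ appears in the weak form}, since such a derivative would only come from an integration by parts that we do not perform. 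All coefficients are therefore in $L^\infty_\sharp$ with norms controlled by $R_0$ through $H^2_\sharp \hookrightarrow C^1_\sharp$, $a_h$ is uniformly coercive, and $(a_h,b_h)$ satisfies a uniform inf--sup condition.

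The third and crucial step is the $H^2 \times H^1$-regularity on $\Omega_1$. I would view the transformed strong system as a perturbation of the standard Stokes operator on the smooth domain $\Omega_1$: expansion under $\chi_h^{-1}$ gives
\begin{equation*}
-\Delta \hat u + \nabla \hat p_0 = \hat f_\ast + \mathcal R_1\cdot\nabla^2 \hat u + h''(x)\,z\,\mathcal R_2(\nabla \hat u) + \mathcal R_3\cdot \nabla \hat p_0,
\end{equation*}
together with an analogous expression for $\operatorname{div}\hat u - \hat g$, where $\mathcal R_1, \mathcal R_3$ are $L^\infty$-matrices with norms controlled by $R_0$ and $\mathcal R_2$ is at most first order in $\hat u$. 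The novelty compared to $W^{2,\infty}$-theory is the $h''$-source term, which I expect to be the main obstacle; it would be handled by
\begin{equation*}
\|h''(x)\,z\,\nabla\hat u\|_{L^2_\sharp(\Omega_1)} \leq \|h''\|_{L^2_\sharp(0,L)}\,\|\nabla\hat u\|_{L^\infty_x L^2_z} \leq C\|h''\|_{L^2}\,\|\hat u\|_{H^2_\sharp(\Omega_1)},
\end{equation*}
using the anisotropic embedding $H^1_\sharp(\Omega_1) \hookrightarrow L^\infty_x L^2_z$, which follows from the decomposition $H^1_\sharp(\Omega_1) = L^2_x H^1_z \cap H^1_x L^2_z$ and the one-dimensional inclusion $H^1_x \hookrightarrow L^\infty_x$. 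To render this prefactor, and the $\mathcal R_1, \mathcal R_3$ ones, small enough to be absorbed on the left of the classical $L^2\to H^2$ Stokes estimate on $\Omega_1$, I would localize in $x$ to periodic arcs of short length $\delta$: on such arcs the $C^0$-oscillation of $h, h', 1/h$ is $O(\delta^{1/2})$ by $H^2 \hookrightarrow C^{1,1/2}$, and $\|h''\|_{L^2(\text{arc})}$ is small by absolute continuity of the $L^2$-norm. Summing the localized estimates through a periodic partition of unity, and undoing the change of variables through Proposition~\ref{prop_cdv}, then yields \eqref{eq_ellest} with $K^s(R_0)$ a continuous, non-decreasing function.
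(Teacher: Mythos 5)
Your first two steps (weak solvability via a Bogovskii lift and transport to $\Omega_1$, with coefficients $A_h,B_h$ involving only $h,h',1/h$) are sound and consistent with the paper. The gap is in the third step, and it is twofold. First, after localization the coefficients $\mathcal R_1,\mathcal R_3$ are \emph{not} small: they measure the deviation of $A_h,B_h$ from the identity, which is of size $O(R_0)$ pointwise (take $h\equiv 2$, or any $h$ with $h'$ of order one), and shrinking the arc only shrinks their \emph{oscillation}, not their magnitude. So the term $\mathcal R_1\cdot\nabla^2\hat u$ cannot be absorbed against the classical flat Stokes estimate on $\Omega_1$, no matter how small $\delta$ is. To exploit oscillation you would have to compare with the coefficient operator frozen at a point $x_0$; but that operator (with matrix $A_h(x_0,z)$, which still depends on $z$ through $h'(x_0)z$) is not the standard Stokes operator and is not the pullback of it by any subgraph map (the required $\phi$ would need $\partial_z\phi=h(x_0)$ and $\partial_x\phi=h'(x_0)z$ simultaneously), so you would need an $H^2$ estimate for it with constant depending only on $R_0$ — which is essentially the whole difficulty again. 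The paper avoids this by not perturbing off the flat operator at all: it differentiates the transported system tangentially in $x$, applies the $H^1\times L^2$ estimate to $(\bar u_x,\hat p_x)$, and then recovers $\partial_{zz}\bar u$ and $\partial_z\hat p_0$ \emph{algebraically} from the two momentum equations and the differentiated divergence constraint, using $1+z^2(h')^2>0$.

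Second, even for the genuinely critical $h''$-term your smallness mechanism does not give the stated uniformity. Absolute continuity of $\int|h''|^2$ yields smallness of $\|h''\|_{L^2(\mathrm{arc})}$ only for the \emph{fixed} function $h$, with an arc length $\delta$ depending on $h$ itself (an $h''$ concentrated on a tiny interval keeps $L^2$-mass of order $R_0$ on arbitrarily short arcs). Hence the number of patches, and the constant produced by summing the localized estimates, depends on $h$ beyond $\|h\|_{H^2}+\|h^{-1}\|_{L^\infty}$, whereas the lemma — and its use along the flow in Section 4, where only $R_t$ is controlled — requires $K^s$ to be a function of $R_0$ alone. The correct device, which is the one the paper uses in \eqref{second:membre1}--\eqref{second:membre3}, is not smallness but interpolation: bound the commutator terms containing $h''$ by quantities of the form $\|\bar u;H^1_\sharp\|^{1/2}\|\bar u_x;H^1_\sharp\|^{1/2}$ (and similarly for the pressure), using exactly the anisotropic embedding $H^1_\sharp(\Omega_1)\hookrightarrow L^\infty_x L^2_z$ you invoke, and then absorb by Young's inequality, the lower-order factor being controlled by the $H^1\times L^2$ estimate. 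With that replacement (and dropping the perturbation-off-the-flat-operator viewpoint in favour of tangential differentiation plus algebraic recovery of the vertical derivatives), your outline becomes the paper's proof.
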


The remainder of this section is devoted to the proof of  Lemma \ref{lem_ellest}. This proof is an adaptation, 
to our periodic framework, of the computations on the Stokes problem that can be found in \cite{Grandmontpp}, which, itself, uses ideas of  \cite{tartar}. Compared to \cite{Grandmontpp}, we also carefully analyze the dependance of the constant 
$K$ on $h$ in inequality  \eqref{eq_ellest}.  To obtain the expected dependency, we assume throughout this section that:
$$
\|h ; H^2_{\sharp}(0, L)\| + \|h^{-1} ; L^{\infty}_{\sharp}(0, L)\| \leq R_0
$$
and show that $K$ depends only on $R_0.$

\medskip

\paragraph{\em First step:  Rewriting of the Stokes system in a given geometry}
As in \cite{Grandmontpp} we compute regularity estimates for solutions to \eqref{eq_stokes1}--\eqref{eq_stokesbc2} by studying the Stokes system transported in a geometry which does not depend on the deformation $h$. Namely, we
derive regularity estimates on $(\hat{u},\hat{p})$  defined by \eqref{def_up}. Indeed, thanks to Proposition \ref{prop_cdv}, we remark that  $(u,p) \in H^2_{\sharp}(\Omega_{h}) \times H^1_{\sharp}(\Omega_h)$ is a  solution 
to  \eqref{eq_stokes1}--\eqref{eq_stokesbc2} if and only if $(\hat{u},\hat{p}) \in H^2_{\sharp}(\Omega_{1}) \times H^1_{\sharp}(\Omega_1)$ is a solution 
to the following Stokes-like system 
\begin{eqnarray}
-{\rm div} [ (A_h \nabla) \hat{u} ] +(B_{h} \nabla) \hat{p}_0 &=& \tilde{f}\,, \quad\mbox{ in }\Omega_{1}\,, \label{eq_hstokes1}\\
{\rm{div }}(B_{h}^{\top}  \hat{u})&=& \tilde{g}\,, \quad \mbox{ in }\Omega_{1}\,,  \label{eq_hstokes2}
\end{eqnarray}
completed with boundary conditions:
\begin{eqnarray}
\hat{u}(x,1) &=& \dot{\eta}(x) e_2\,, \phantom{0 \,,} \quad \forall \, x \in (0, L)\,, \label{eq_hstokesbc1}\\
\hat{u}(x,0)&=& 0 \,, \phantom{\dot{\eta}(x)\,,} \quad \forall \, x \in (0, L)\,, \label{eq_hstokesbc2}
\end{eqnarray}
where $(A_h,B_h)$ and $(\tilde{f},\tilde{g})$ are explicit. 
Indeed, by introducing the mapping $\chi_h(x, z)=(x, h(x)z)$, for $(x, z)\in \Omega_1$, we obtain:
\begin{equation}
\label{matrice:h}
B_{h} :=\textrm{cof }\nabla \chi_h=
\begin{pmatrix}
h & -h' z \\
0 & 1
\end{pmatrix},
\quad 
A_h := \frac{1}{h} (\textrm{cof }\nabla \chi_h)^T\  \textrm{cof }\nabla \chi_h=
\begin{pmatrix}
h & -h'z \\
-h'z & \frac{1}{h} + \frac{(h'z)^2}{h}
\end{pmatrix},
\end{equation}
and the transported source terms: 
$$
\tilde{f} := h
\hat{f},\quad
\tilde{g} := h
\hat{g}.
$$

We note that ${\tilde f}\in L^2_\sharp (\Omega_1)$, $\tilde{g}\in L^{2}_{\sharp, 0}(\Omega_1)\cap  H^1_\sharp (\Omega_1)$. Thus, thanks to Proposition \ref{prop_cdv}, to prove Lemma \ref{lem_ellest} it is sufficient to derive similar estimates but on the transported unknowns $(\hat{u},\hat{p})$ solution of \eqref{eq_hstokes1}--\eqref{eq_hstokesbc2}. Namely, we obtain that there exists a unique $({\hat u}, {\hat p}_0)\in H^2_\sharp(\Omega_1)\times (H_\sharp^1(\Omega)\cap L^2_{\sharp, 0}(\Omega_1))$ solution of \eqref{eq_hstokes1}--\eqref{eq_hstokesbc2} that satisfies
\begin{equation}
\label{eq_ellest2}
\|{\hat u }; H^2_{\sharp}(\Omega_1)\| + \| {\hat p }_0; H^1_{\sharp}(\Omega_1)\| \leq { K }\left( \|{\tilde f }; L^2_{\sharp}(\Omega_1)\| +\|{\tilde g} ; H^1_{\sharp}(\Omega_1)\| + \|\dot{\eta} ; H^{\frac 32}_{\sharp}(0, L)\| \right)\,,
\end{equation}
where the constant $K$ depends only on  $R_0$.  Since the matrices $B_h$ and $A_h$ are in $H^1_\sharp((0,L);H^s(0, 1)),$ for any $s\geq 0$,
we have that $A_h$ and $B_h$ belong to a multiplier space of $H^1(\Omega_1)$. We refer the reader to \cite[Lemma 6]{Grandmontpp} for more details.
In particular, we obtain that for any $v\in H_\sharp^2(\Omega_1)$,  there holds ${\rm div} [ (A_h \nabla) v]\in L_\sharp^2(\Omega_1)$ and, for any $q\in H_\sharp^1(\Omega_1)$, there holds
$(B_h\nabla) q\in L_\sharp^2(\Omega_1)$.  Thanks to Piola identity, we also have:
\begin{equation} \label{eq_piola}
 {\rm div}(B_h^{\top} v )=B_{h}^{\top}  : \nabla  v ,
\end{equation}
so that,  for any $v\in H_\sharp^2(\Omega_1)$, there holds $ {\rm div}(B_h^{\top} v  )\in H_\sharp^1(\Omega_1)$.   Consequently the assumptions on the deformation $h$ are compatible with the expected regularity on $(\hat u, \hat p)$.

\begin{rem} Let us mention that we  get estimates for a pressure $\hat{p}_0$ such that 
$$
\int_{\Omega_1} \hat{p}_0(x,z){\rm d}x {\rm d}z = 0\,.
$$
Through the change of variables \eqref{def_hat}, this implies that the pressure $q$,  defined by $q(x, y)={\hat p}_0(x, y/h(x))$ and on which we deduce an estimate, verifies the following constraint
$$
\int_{\Omega_{h}} \dfrac{q(x,y)}{h(x)}{\rm d}x {\rm d}y = 0\,. 
$$
So, the pressure we  compute with this method does not match the one mentioned in  Lemma \ref{lem_ellest}.   Nevertheless,  
the effective pressure $p_0$ mentioned in Lemma \ref{lem_ellest}  reads $P_{h}q$ where $P_h$ stands for the $L^2$-orthogonal
projector on   $\L^2_{\sharp, 0}(\Omega_h)$:  
$$
p_0=P_h q := q - \dfrac{1}{|\Omega_{h}|} \int_{\Omega_h} q(x,y){\rm d}x{\rm d}y
$$
satisfying $\|p_0 ; H^1_{\sharp}(\Omega_h)\| \leq \|q ; H^1_{\sharp}(\Omega_h)\|.$  Hence, we prove \eqref{eq_ellest}
with $p_0$ replaced by $q$. 
\end{rem}

Let us now  study precisely the existence and uniqueness of $({\hat u}, {\hat p})$ in $H_\sharp^1(\Omega_1)\times L^2_{\sharp, 0}(\Omega_1)$ and derive elliptic estimates in $H_\sharp^2(\Omega_1)\times H_\sharp^1(\Omega_1)$.

\medskip

\paragraph{\em Second step: Lifting of the Dirichlet boundary conditions}
As $\dot{\eta} \in H^{\frac 32}_{\sharp}(0,L)$ there exists $u_{\dot \eta} \in H^2_\sharp (\Omega_1)$ such that ${u_{\dot \eta}}_{|_{z=1}}=\dot\eta e_2$, ${u_{\dot \eta}}_{|_{z=0}}=0$ and 
\begin{equation}
\label{est:doteta}
\|u_{\dot \eta};  H^2_\sharp (\Omega_1) \|\leq C\| \dot\eta ; H^{\frac 32}_\sharp(0, L)\|.
\end{equation}
with a constant $C$ depending only on the fixed geometry.
 We  set
$\bar u= \hat u -{u_{\dot \eta}}$. This new velocity satisfies:
\begin{eqnarray}
-{\rm div} [ (A_h \nabla) \bar{u} ] +(B_{h} \nabla) \hat{p}_0 &=&\bar f\,, \quad \mbox{ in } \Omega_{1}\,,\label{Stokes:newf}\\
{\rm{div }}(B_{h}^{\top}  \bar{u})&=& \bar g\,, \quad  \mbox{ in }\Omega_{1}\,,  \label{Stokes:newg}
\end{eqnarray}
with
$$
\bar f :=  \tilde{f}+{\rm div} [ (A_h \nabla) {u}_{\dot\eta} ]\,,  \qquad \bar g:=\tilde{g}- B_h^{\top} : \nabla {u}_{\dot\eta} \,,
$$
completed with boundary conditions:
\begin{eqnarray}
\bar{u}(x,1) &=& 0\,, \phantom{0 \,,} \quad \forall \, x \in (0, L)\, \label{Stokes:DH1}\,,\\
\bar{u}(x,0)&=& 0 \,, \phantom{\dot{\eta}(x)\,,} \quad \forall \, x \in (0, L)\,.\label{Stokes:DH2}
\end{eqnarray}
As underlined previously, thanks to the regularity of $A_h$ and $B_h$, the new sources terms $(\bar f, \bar g)$ belong to 
$L^2_\sharp(\Omega_1)\times H^1_\sharp(\Omega_1)$ and satisfy the following estimates:
\begin{equation}
\label{est:termesource}
\|{\bar f}; L^2_\sharp(\Omega_1)\|+ \|{\bar g}; H^1_\sharp(\Omega_1)\|\leq K(\|{\tilde f}; L^2_\sharp(\Omega_1)\|+ \|{\tilde g};H^1_\sharp(\Omega_1)\| + \| \dot\eta ; H^{\frac 32}_\sharp(0, L)\|),
\end{equation}
where $K$ depends only on $R_0$. Moreover the average of $\bar g$ on $\Omega_1$ is still equal to zero, since
$$
\int_{\Omega_1} B_h^{\top} : \nabla {u}_{\dot\eta} =\int_{\Omega_1} \mbox{div } (B_h^{\top} {u}_{\dot\eta} )= \int_0^L \dot \eta=0.
$$
Recalling that $u_{\dot \eta}$ satisfies \eqref{est:doteta}, we obtain now that the proof of Lemma \ref{lem_ellest} reduces to the study of the case $\dot{\eta} = 0$
({\em i.e.} solving system \eqref{Stokes:newf}-\eqref{Stokes:newg}-\eqref{Stokes:DH1}-\eqref{Stokes:DH2}).
 \medskip

\paragraph{\em Third step: $H^1\times L^2$ estimates} We first define $H^{-1}_\sharp(\Omega_1)$ as the dual space of the subset of  $H^{1}_\sharp(\Omega_1)$ of functions  with zero trace on $(0, L)\times \{0\}$ and $(0, L)\times \{1\}$.
The aim of this step is to prove that, for any $(\bar f, \bar g)\in H^{-1}_\sharp(\Omega_1)\times L^2_{\sharp,0}(\Omega_1)$,  there exists a unique $(\bar u, \hat p_0)\in H^1_\sharp(\Omega_1)\times L^2_{\sharp,0}(\Omega_1)$ solution of  \eqref{Stokes:newf}-\eqref{Stokes:DH2} and satisfying
\begin{equation}
\label{eq_stokes:weak}
\|{\bar u }; H^1_{\sharp}(\Omega_1)\| + \| {\hat p }_0; L^2_{\sharp}(\Omega_1)\| \leq { K }\left( \|{\bar f }; H^{-{1}}_{\sharp}(\Omega_1)\| +\|{\bar g} ; L^2_{\sharp}(\Omega_1)\| \right)\,,
\end{equation}
where $K$ depends only on $R_0$.
Since the arguments are quite standard (see, for instance, \cite{G98}, \cite{Grandmontpp} in similar contexts), we only sketch the main points of the proof.
First, we notice that:
\begin{itemize}
\item $A_h\in L^\infty(\Omega_1)$ and there exists  two non negative constants
 $\alpha_1$ and $\alpha_2$  controlled by above and from below by a function of $R_0$ for which 
 $$
\alpha_1  {\rm I} \leq A_{h} (x,z)\leq \alpha_2{\rm I}\,, \quad \forall \, (x,z)  \in \Omega_{1}\,,
 $$
 in the sense of symmetric matrices;
\item $B_h$ is invertible and $B_h^{-1}$ belongs to $H^1_\sharp((0,L);H^s(0, 1)) ,$ for any $s\geq 0$, with norms  dominated by a function of  $R_0$ only.
\end{itemize}
With the second point at-hand, we build a lifting operator for the divergence. Namely, for any $\chi\in L^2_{\sharp, 0}(\Omega_1)$, there exists a function $w\in H^1_{\sharp}(\Omega_1)$, with $w_{|_{z=1}}=w_{|_{z=0}}=0$, such that 
\begin{equation}
\label{est:w}
{\div } (B_h^{\top} w)=\chi, \quad \|w: H^1_\sharp(\Omega_1)\|\leq K \|\chi; L^2_\sharp(\Omega_1)\|,
\end{equation}
where $K$ depends only on  $R_0$.
 Indeed, as $\chi$ has zero average on $\Omega_1,$ there exists $v\in H^1_{\sharp}(\Omega_1)$, with $v_{|_{z=1}}=v_{|_{z=0}}=0$, such that 
$$
{\div } (v)=\chi, \quad \|v: H^1_\sharp(\Omega_1)\|\leq C\|\chi; L^2_\sharp(\Omega_1)\|.
$$
See, for instance, \cite[Lemma III.3.1]{Galdi}. We set then $w= B_h^{-\top}v.$ As $B_h^{-\top}$ is a multiplier of $H^{1}$ with norm  bounded by a function of  $R_0$ we obtain \eqref{est:w}.

\medskip

Then, to solve \eqref{Stokes:newf}-\eqref{Stokes:DH2}  we first lift the divergence source term $\bar{g}$ by applying the previous construction. We then
solve the Stokes-like system  \eqref{Stokes:newf}-\eqref{Stokes:newg} by reproducing the classical arguments for the Stokes system. 
As  $A_h$ satisfies the first point, we first construct a weak solution $\bar u \in H^1_\sharp(\Omega_1)$ depending continuously on $(\overline{f},\overline{g})$. Then, 
as $B_h^{-1}$ satisfies the second point, we obtain also the pressure  $\hat p_0 \in L^2_{\sharp, 0}(\Omega_1)$ which completes  \eqref{eq_stokes:weak}.
\medskip

\paragraph{\em Fourth step: proof of Lemma \ref{lem_ellest}, $H^2/H^1$-regularity}
To complete the proof of Lemma \ref{lem_ellest}, it remains to obtain an estimate on the second order derivatives of $\bar{u}$ and the first order derivatives of $\hat p_0.$
We obtain that 
\begin{equation}
\label{eq_stokes:strong}
\|{\bar u }; H^2_{\sharp}(\Omega_1)\| + \| {\hat p }_0; H^1_{\sharp}(\Omega_1)\| \leq { K }\left( \|{\bar f };  L^2_{\sharp}(\Omega_1)\| +\|{\bar g} ; H^1_{\sharp}(\Omega_1)\| \right)\,,
\end{equation}
where $K$ depends only on  $R_0$.
We follow the method introduced in \cite{tartar} and already applied in \cite{Grandmontpp} in our subgraph framework. 
Thanks to a classical regularization argument, we assume in what follows that  $h \in C^\infty_{\sharp}(0, L)$.  In this case, classical elliptic estimates ensure that $({\bar u },   \hat p_0)  \in H^2_{\sharp}(\Omega_1)\times H^1_{\sharp}(\Omega_1)$. Nervertheless the standard elliptic estimates involve norms of the deformation in $W^{2, \infty}(0, L)$ (see for instance \cite{Bello}).  Consequenlty, we aim to show that the constant only involves $R_0$. 
\medskip

First we obtain estimate on $\bar{u}_x :=  \partial_x \bar u$ and $\hat{p}_x := \partial_x \hat p_0$.  For this purpose, we differentiate 
the equations \eqref{Stokes:newf}, \eqref{Stokes:newg} satisfied by $(\bar u, \hat p_0)$ w.r.t. $x.$ We obtain that $(\bar{u}_x,\hat{p}_x) \in H^1_{\sharp}(\Omega_1) \times L^2_{\sharp, 0}(\Omega_1)$ is the solution of
\begin{eqnarray*}
-{\rm div} [(A_h \nabla) \bar{u}_x ] +(B_{h} \nabla) \hat{p}_x &=& \bar f_x\,, \quad  \text{on $\Omega_{1}$}\,, \\
\textrm{div }(B_{h}^{\top}  \bar{u}_x )&=& \partial_x\bar g- \partial_x B_h^{\top} : \nabla \bar{u} \,, \quad  \text{on $\Omega_{1}$}\,,  
\end{eqnarray*}
where 
$\bar f_x = \partial_x \bar{f} + {\rm div} [(\partial_x A_h \nabla) \bar{u}] - (\partial_x B_{h} \nabla) \hat{p}_0\,,$ completed with periodic boundary
conditions on lateral boundaries of $\Omega_1$ and homogeneous boundary conditions on $y=1$ and $y=0$ (we recall that we consider the case 
$\dot{\eta}= 0$). 

\medskip

We note that 
$$
 \partial_x B_h^{\top} : \nabla \bar{u} = {\rm div} ( \partial_x B_h^{\top}\bar{u}),
$$
which implies
$$
\int_{\Omega _1} \partial_x B_h^{\top} : \nabla \bar{u}  = 0\,.
$$
Consequently, taking into account that $\bar{g}$ is $L$-periodic  w.r.t. $x$, we obtain that $\bar g_x=\partial_x\bar g- \partial_x B_h^{\top} : \nabla \bar{u}$ has a zero average on $\Omega_1$. Due to the regularity of $(\bar u,\hat p_0)$, the right hand side $(\bar f_x, \bar g_x)$ belongs to $H^{-1}_\sharp(\Omega_1)\times L^2_\sharp(\Omega_1),$ but we need sharp estimate to show our main result.  
As we stated  previously  $(\partial_x A_h,\partial_x B_h) \in L^2((0,L) ,; H^s(0,1)),$ for arbitrary $s \geq 0,$ (with norms bounded by a function of $R_0$) and  
$H^1_{\sharp}((0,L) \times (0,1)) \subset L^{\infty}_{\sharp}((0,L) ; L^2(0,1)).$ Hence, in the spirit of  \cite[Lemma 6]{Grandmontpp}, we obtain
\begin{equation}
\label{second:membre1}
\|\textrm{div }((\partial_x A_h \nabla ) \bar u); H^{-1}_\sharp(\Omega_1)\|\leq \|(\partial_x A_h \nabla ) \bar u; L^2_\sharp(\Omega_1)\|\leq K \|\bar u; H^1_\sharp(\Omega_1)\|^{1/2}\|\bar u_x; H^1_\sharp(\Omega_1)\|^{1/2},
\end{equation}
and 
\begin{equation}
\label{second:membre2}
\|\partial_x B_h: \nabla  \bar u; L^2_\sharp(\Omega_1)\|\leq K \|\bar u; H^1_\sharp(\Omega_1)\|^{1/2}\|\bar u_x; H^1_\sharp(\Omega_1)\|^{1/2},
\end{equation}
where $K$ depends on $R_0.$
Next we have to estimate $(\partial_x B_h\nabla) \hat p_0$ in $H^{-1}_\sharp(\Omega_1)$.
Thanks to the Piola identity and the fact that $B_h$ is the cofactor matrix of the gradient of $\chi_h$, we obtain, for any $w\in H^1_\sharp(\Omega_1)$ such that $w_{|_{z=0}}=w_{|_{z=1}}=0$
$$
\int_{\Omega_1} (\partial_x B_h\nabla) \hat p_0 w = - \int_{\Omega_1}  \hat p_0 \partial_xB_h^{\top}:\nabla w.
$$
Consequently, as in the computations of the latter bounds, we obtain:
\begin{equation}
\label{second:membre3}
\|(\partial_x B_h\nabla) \hat p_0; H^{-1}_\sharp(\Omega_1)\|\leq K \|\hat p_0; L^2_\sharp(\Omega_1)\|^{1/2}\|\hat p_x; L^2_\sharp(\Omega_1)\|^{1/2}.
\end{equation}
We can now apply the result obtained at the previous  step to $(\bar{u}_x,\hat{p}_x) $. Combining with \eqref{second:membre1}, \eqref{second:membre2}, \eqref{second:membre3}, this leads to
\begin{eqnarray*}
\|\bar u_x; H^1_\sharp(\Omega_1)\|+\|\bar p_x; L^2_\sharp(\Omega_1)\|\leq K \left( \|\bar f; L^2_\sharp(\Omega_1)\|+\|\bar g; H^1_\sharp(\Omega_1)\|+
 \|\bar u; H^1_\sharp(\Omega_1)\|^{1/2}\|\bar u_x; H^1_\sharp(\Omega_1)\|^{1/2}  \right.\nonumber\\\left.+\|\hat p_0; L^2_\sharp(\Omega_1)\|^{1/2}\|\hat p_x; L^2_\sharp(\Omega_1)\|^{1/2}|\right).
\end{eqnarray*}
and finally: 
\begin{eqnarray}
\|\bar u_x; H^1_\sharp(\Omega_1)\|+\|\bar p_x; L^2_\sharp(\Omega_1)\|\leq K \left( \|\bar f; L^2_\sharp(\Omega_1)\|+\|\bar g; H^1_\sharp(\Omega_1)\|\right).\label{est:dx}
\end{eqnarray}

\medskip

 To obtain a similar estimate on the full second order gradient of $\bar u$ (resp. on the full gradient of $\hat p_0$), we have to bound $\partial_{zz} \bar{u}$ (resp. $\partial_z \hat p_0$).
 To this end, we note that, differentiating \eqref{Stokes:newg} w.r.t $z$ and applying \eqref{est:dx}, we have 
 $$
 \| - z h'\partial_{zz} \bar{u}_1 +  \partial_{zz} \bar{u}_2 ; L^2_{\sharp}(\Omega_1)\| \leq K \left( \|\bar{f};L^2_{\sharp}(\Omega_1)\| +\|\bar{g};L^2_{\sharp}(\Omega_1)\|  \right) .
 $$
While, combining the first equation of \eqref{Stokes:newf} with the second equation of \eqref{Stokes:newf}
multiplied by $zh'$, in order to eliminate the pressure, leads to
$$
 \| {z h'} \partial_{zz} \bar{u}_2 + \partial_{zz} \bar{u}_1 ; L^2_{\sharp}(\Omega_1)\| \leq K ( \|\bar f; L^2_\sharp(\Omega_1)\|+\|\bar g; H^1_\sharp(\Omega_1)\|).
$$ 
By simple algebraic combinations,   since $1+z^2 (h')^2 >0$,  we obtain 
$$
\|\nabla^2 \bar{u} ; L^2_{\sharp}(\Omega_1)\|  \leq K \left( \|\tilde{f};L^2_{\sharp}(\Omega_1)\| + \|\bar{g};L^2_{\sharp}(\Omega_1)\| \right).
$$
A similar inequality holds for $\|\partial_z \tilde{p} ; L^2_{\sharp}(\Omega_1)\|$, using once again the first equation of \eqref{Stokes:newf}.
 Combining these inequalities,  we  finally  obtain the desired bound
$$
\| \bar{u} ; H^2_{\sharp}(\Omega_1)\|  + \|\hat {p}_0 ; H^1_{\sharp}(\Omega_1)\|  \leq K \left( \|\bar{f};L^2_{\sharp}(\Omega_1)\| +  \|\bar{g};L^2_{\sharp}(\Omega_1)\| \right).  
$$ 
This ends the proof of Lemma \ref{lem_ellest}.

\medskip

For the study of the whole coupled system, we need an estimate on the surface load applied by the fluid on the structure.
If we compute $\phi(u,p,h)$ through the change of variable \eqref{def_up} ({\em i.e.} w.r.t. $\hat{u},\hat{p}$ and $h,$) and we use, for instance, the multiplier Lemma \cite[Proposition B.1]{GrubbSolonnikov} or Proposition \ref{prop_cdv}, we can also obtain the following corollary, stated without proof:
\begin{corollary} \label{cor_phi}
Let $h \in H^2_{\sharp}(0, L)$ such that $h^{-1} \in L^{\infty}_{\sharp}(0, L)$ be given  there exists a non decreasing function $K^b: [0,\infty) \rightarrow (0,\infty)$ such that, if $\|h ; H^2_{\sharp}(0, L)\| + \|h^{-1} ; L^{\infty}_{\sharp}(0, L)\| \leq R_0$ the following propositions hold true.  

Given source terms $(f , g)\in L^2_{\sharp}(\Omega_{h})\times (H^1_\sharp(\Omega_1)\cap L^2_{{\sharp}, 0}(\Omega_{h})) $ and a boundary condition $\dot{\eta}\in H^{\frac 32}_{\sharp}(0,L) \cap L^2_{\sharp,0}(0, L)$ satisfying \eqref{eq_stokescomp},  the unique pair $(u,p_0) \in H^2_{\sharp}(\Omega_{h}) \times (H^1_{\sharp}(\Omega_h) \cap L^2_{{\sharp}, 0}(\Omega_{h}))$ solution to \eqref{eq_stokes1}-\eqref{eq_stokes2}-\eqref{eq_stokesbc1}-\eqref{eq_stokesbc2}  satisfies:
\begin{equation} \label{eq_phiest}
\|\phi(u,p_0,h) ; H^{\frac 12}_{\sharp}(0, L)\| \leq K^b(R_0) \left( \|f ; L^2_{\sharp}(\Omega_h)\| +\|g ; H^1_{\sharp}(\Omega_h)\| +\|\dot{\eta} ; H^{\frac 32}_{\sharp}(0, L)\| \right)\,.
\end{equation}
The constant $c$ defined by $c=\frac{1}{L}\int_0^L \phi(u, p_0, h) {\rm d}x$ satisfies
\begin{equation}
\label{est:const:pression}
|c|\leq K^b(R_0)\left( \|f ; L^2_{\sharp}(\Omega_h)\| +\|g ; H^1_{\sharp}(\Omega_h)\| + \|\dot{\eta} ; H^{\frac 32}_{\sharp}(0, L)\| \right)\,.
\end{equation}
\end{corollary}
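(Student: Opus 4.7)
The plan is to reduce the estimate to the cylindrical geometry via the change of variables \eqref{def_up}, where Lemma \ref{lem_ellest} together with Proposition \ref{prop_cdv} already give sharp control of $(\hat{u},\hat{p}_0)$ in $H^2_{\sharp}(\Omega_1)\times H^1_{\sharp}(\Omega_1)$. Once this is done, the statement reduces to a careful trace and multiplier argument on the interface $z=1$, with constants depending only on $R_0$.

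More precisely, I would first use the chain rule to express the fluid load $\phi(u,p_0,h)$ as
\[
\phi(u,p_0,h)(x) = F_1(h,h')(x)\,\partial_x\hat{u}(x,1) + F_2(h,h')(x)\,\partial_z\hat{u}(x,1) + F_3(h')(x)\,\hat{p}_0(x,1),
\]
where the coefficients $F_i$ are polynomial expressions in $h'$ and $1/h$. Since $h\in H^2_{\sharp}(0,L)$ and $1/h\in L^{\infty}_{\sharp}(0,L)$ with $\|h;H^2_{\sharp}\|+\|1/h;L^{\infty}_{\sharp}\|\leq R_0$, and since $H^2_{\sharp}(0,L)$ is a Banach algebra embedded in $C^{1,1/2}_{\sharp}(0,L)$, the coefficients $F_i$ belong to $H^1_{\sharp}(0,L)$ with norms bounded by a function of $R_0$ only.

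Next, I invoke the trace theorem: $\hat{u}\in H^2_{\sharp}(\Omega_1)$ gives $\nabla\hat{u}(\cdot,1)\in H^{1/2}_{\sharp}(0,L)$, and $\hat{p}_0\in H^1_{\sharp}(\Omega_1)$ gives $\hat{p}_0(\cdot,1)\in H^{1/2}_{\sharp}(0,L)$, both with continuity constants independent of $h$ in the fixed geometry $\Omega_1$. The crucial multiplier fact is that elements of $H^1_{\sharp}(0,L)$ act continuously on $H^{1/2}_{\sharp}(0,L)$ (this is the one-dimensional version of the multiplier lemma cited in \cite[Proposition B.1]{GrubbSolonnikov}). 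Combining these three ingredients with Lemma \ref{lem_ellest} and Proposition \ref{prop_cdv} yields
\[
\|\phi(u,p_0,h);H^{1/2}_{\sharp}(0,L)\| \leq K^b(R_0)\bigl( \|\hat{u};H^2_{\sharp}(\Omega_1)\| + \|\hat{p}_0;H^1_{\sharp}(\Omega_1)\|\bigr),
\]
which, combined with \eqref{eq_ellest} transported to $\Omega_1$, gives \eqref{eq_phiest}. The bound \eqref{est:const:pression} follows immediately from
\[
|c| \leq \tfrac{1}{\sqrt{L}}\|\phi(u,p_0,h);L^2_{\sharp}(0,L)\| \leq C\,\|\phi(u,p_0,h);H^{1/2}_{\sharp}(0,L)\|.
\]

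The main obstacle is not existence or even regularity (which are handed to us by Lemma \ref{lem_ellest}), but the bookkeeping of the multiplier estimates on the interface: one has to verify that every coefficient arising from differentiating the transport $(x,y)\mapsto(x,y/h(x))$ and from evaluating $\nabla u$ at $y=h(x)$ sits in a space of multipliers of $H^{1/2}_{\sharp}(0,L)$ with norm controlled by $R_0$ only. This is precisely where the hypothesis $h^{-1}\in L^{\infty}_{\sharp}(0,L)$ is used, to ensure that the factors $1/h$ and $h'/h$ remain in $H^1_{\sharp}(0,L)$ with the claimed dependence on $R_0$.
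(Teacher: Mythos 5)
Your argument is correct and follows essentially the same route the paper indicates: the paper states Corollary \ref{cor_phi} without proof, only sketching that one should express $\phi(u,p_0,h)$ through the change of variables \eqref{def_up} and invoke the multiplier lemma of \cite[Proposition B.1]{GrubbSolonnikov} or Proposition \ref{prop_cdv}, which is exactly what you carry out (trace of $\nabla\hat u$, $\hat p_0$ on $z=1$ in $H^{1/2}_{\sharp}$, coefficients in $h'$, $1/h$ acting as $H^1_{\sharp}$-multipliers with norms controlled by $R_0$, then Lemma \ref{lem_ellest}). The bound on $c$ by Cauchy--Schwarz is the intended immediate consequence.
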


\section{Proof of {Theorem \ref{thm_main}}} \label{sec_glob}
Let  $(h^0,\dot{h}^0,u^0) \in H^3_{\sharp} (0, L)\times H^1_{\sharp} (0, L) \times H^1_{\sharp}(\mathcal F^0)$ be given and 
satisfy the compatibility conditions \eqref{eq_compic1}-\eqref{eq_compic3}. We consider $(u,p,h,c)$, the associated non-extendable strong solution 
to \BFG\ completed with the initial conditions \eqref{eq_ic1}-\eqref{eq_ic3} (in the sense of Definition \ref{def_strongsolution}).  This solution is defined on some time-interval $[0,T^*),$ where $T^*>0$. We compute estimates satisfied by this solution on $[0,T]$ for arbitrary $T<T^*.$ 

\medskip

The proof of Theorem \ref{thm_main} is divided into three parts, each of them corresponding to the derivation of one estimate similar to \eqref{reg_rho1}, \eqref{reg_rho2} and \eqref{reg_rho3} respectively.  First, we recall the energy estimate satisfied by the solution, then we prove a distance estimate which ensures that the beam does not touch the bottom of the fluid cavity on the time interval $(0, T)$ and finally we derive a regularity estimate which garantees that the strong solution can be extended on any given time  interval, leading to our global-in-time existence theorem.

\subsection{Energy estimate}
We first recall the classical  estimate associated with the dissipative equations  that we consider.
We introduce $\mathcal{E}_c$  and $\mathcal H$ respectively the  total energy of the coupled system and the dissipated energy: 
\begin{eqnarray*}
\mathcal{E}_c (t) &:=& \dfrac{1}{2} \left[ \int_{0}^{L} \left( \rho_s|\partial_{t} h(x,t)|^2 + \alpha |\partial_{xx} h(x,t) |^2 + \beta |\partial_{x} h(x,t)|^2 \right){\rm d}x + \int_{\mathcal F(t)} \rho_f|u(x,y,t)|^2 {\rm d}x {\rm d}y  \right]\,,  \\
\mathcal H(t) &:=&\gamma \int_{0}^{L} |\partial_{tx} h(x,t)|^2 {\rm d}x + \mu \int_{\mathcal F(t)} |\nabla u(x,y,t)|^2 {\rm d}x {\rm d}y\,.
\end{eqnarray*}
We have then that:
\begin{proposition} \label{prop_estkin}
The following energy balance holds true
\begin{equation} \label{decay_ec}
\mathcal{E}_c(t) + \int_{0}^t \mathcal H(s){\rm d}s = \mathcal {E}_c(0)\,, \quad \forall \, t \in [0,T]\,.
\end{equation}
\end{proposition}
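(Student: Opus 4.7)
My plan is to test the Navier--Stokes system \eqref{eq_NS} against $u$ and the beam equation \eqref{eq_elasticite} against $\partial_t h$, integrate, and sum the two identities; the surface coupling terms will then cancel by construction of $\phi$ in \eqref{eq_phi}. All manipulations are legitimate under the strong-solution regularity of Definition \ref{def_strongsolution}: in particular $u \in C([0,T]; H^1_\sharp(\mathcal F(t)))$ (Remark \ref{rem:reg_sup}) and $\partial_t h \in C([0,T]; L^2_\sharp)\cap L^2(0,T; H^1_\sharp)$. For full rigour, one first derives the identity for smooth approximations pulled back to $\Omega_1$ via \eqref{def_up} (using the full regularity \eqref{reg_hatu}--\eqref{reg_h}) and then passes to the limit through Proposition \ref{prop_cdv}.

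For the inertial part of the fluid, I use the Reynolds transport formula
\[
\int_{\mathcal F(t)} \rho_f\,\partial_t u\cdot u = \frac{d}{dt}\int_{\mathcal F(t)}\frac{\rho_f|u|^2}{2} - \int_{\partial \mathcal F(t)}\frac{\rho_f|u|^2}{2}\,V\cdot n\,dS,
\]
with $V\cdot n$ the normal velocity of the boundary (zero on the fixed bottom, self-cancelling on the lateral sides by periodicity), and combine with
\[
\int_{\mathcal F(t)} \rho_f(u\cdot\nabla u)\cdot u = \int_{\partial\mathcal F(t)}\frac{\rho_f|u|^2}{2}\,u\cdot n\,dS,
\]
which uses $\mathrm{div}\,u=0$. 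On the moving top $\{y=h(x,t)\}$, the kinematic condition \eqref{eq_bc2} gives $u\cdot n = V\cdot n$, so the two boundary contributions cancel and the inertial terms reduce to $\frac{d}{dt}\int_{\mathcal F(t)}\rho_f|u|^2/2$. Integrating $-\mathrm{div}\,\sigma(u,p)\cdot u$ by parts produces $\int_{\mathcal F(t)}\sigma:\nabla u - \int_{\partial\mathcal F(t)}(\sigma n)\cdot u\,dS$; using $\mathrm{div}\,u=0$, the interior integrand equals $\mu|\nabla u|^2 + \mu\,\partial_i u_j\partial_j u_i$, and the extra cross term rewrites as $\mu\int_{\partial\mathcal F(t)}[(u\cdot\nabla)u\cdot n]\,dS$, which vanishes: on the bottom because $u=0$, laterally by periodicity, and on the top by combining $u_1\equiv 0$ there (hence $\partial_x u_1 + \partial_x h\,\partial_y u_1 = 0$) with the incompressibility relation $\partial_y u_2 = -\partial_x u_1$ on $y=h$.

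The remaining surface integral is the key coupling term: parametrising the top by $x\in(0,L)$ with $n\,dS = (-\partial_x h\,e_1 + e_2)\,dx$ and inserting $u = \partial_t h\,e_2$ there, a direct comparison with \eqref{eq_phi} yields
\[
-\int_{\partial \mathcal F(t)}(\sigma(u,p)n)\cdot u\,dS = \int_0^L \phi(u,p,h)\,\partial_t h\,dx.
\]
Independently, testing \eqref{eq_elasticite} against $\partial_t h$ and integrating by parts in $x$ under periodicity gives
\[
\frac{d}{dt}\int_0^L \frac12\bigl(\rho_s|\partial_t h|^2 + \beta|\partial_x h|^2 + \alpha|\partial_{xx} h|^2\bigr)\,dx + \gamma\int_0^L |\partial_{tx} h|^2\,dx = \int_0^L \phi(u,p,h)\,\partial_t h\,dx.
\]
Summing the fluid and beam identities, the two $\int \phi\,\partial_t h$ terms cancel with opposite signs, giving $\frac{d}{dt}\mathcal E_c(t) + \mathcal H(t) = 0$. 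The decomposition $p=p_0+c$ is harmless at this step, since replacing $p_0$ by $p_0+c$ in $\phi$ adds $c\int_0^L\partial_t h\,dx = 0$ by \eqref{eq_contrainte_dth}. Integrating in time from $0$ to $t$ yields \eqref{decay_ec}.

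The only genuine obstacle is justifying the Reynolds formula and the integrations by parts at the bare strong-solution regularity; this is handled by the standard routine of smoothing on the fixed domain $\Omega_1$ and passing to the limit via Proposition \ref{prop_cdv}, each individual term being continuous in the solution for this topology. Continuity of both $\mathcal E_c$ and of $t\mapsto \int_{\mathcal F(t)} |\nabla u|^2$ (Remark \ref{rem:reg_sup}) then ensures the identity holds for every $t\in[0,T]$.
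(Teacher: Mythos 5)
Your proof is correct and follows essentially the same route as the paper: testing the Navier--Stokes equations with $u$ and the beam equation with $\partial_t h$, using the kinematic coupling and transport of the moving domain to reduce the inertial terms to $\frac{d}{dt}\int_{\mathcal F(t)}\rho_f|u|^2/2$, and cancelling the interface terms via \eqref{eq_phi}. The only (minor) difference is that you verify the identity $2\int_{\mathcal F(t)}|D(u)|^2=\int_{\mathcal F(t)}|\nabla u|^2$ by hand, through the vanishing of the boundary term $\int_{\partial\mathcal F(t)}(u\cdot\nabla u)\cdot n$ using the tangential derivative of $u_1=0$ on the graph and incompressibility, whereas the paper simply invokes the corresponding lemma of \cite{CDEG05}.
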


\begin{proof}
Multiplying  first \eqref{eq_NS} by $u$ and integrating by parts leads to
\begin{eqnarray}
\int_0^t\int_{{\mathcal F}(s)} (\partial_t u + u \cdot \nabla u )\cdot u &=& \int_0^t\int_{{\mathcal F}(s)} {{\rm div}\sigma(u,p)} \cdot u \notag \\
											&=& \int_0^t \int_{\partial \mathcal F (s)} \sigma(u,p)n \cdot u - 2\mu \int_0^t\int_{{\mathcal F}(s)} |D(u)|^2 \notag\\
											&=&  -\int_0^t \int_{0}^{L} \phi(u,p,h) \partial_t h - 2\mu \int_0^t\int_{{\mathcal F}(s)} |D(u)|^2\,. \label{eq_flux}
\end{eqnarray}
In the last equality we have used the coupling conditions at the interface between the fluid and the structure.
Moreover  thanks to  the only vertical motion of the beam together with the divergence free constrain (see \cite[Lemma 6]{CDEG05} in the 3D case),
there holds:
\begin{eqnarray}
2 \int_{\mathcal F(s)} |D(u)|^2 &=& \int_{\mathcal F (s)} |\nabla u|^2\,. \label{eq_Korn}
\end{eqnarray}
Furthermore, since the boundaries of $\mathcal{F}(t)$ move with the velocity-field $u,$ we have
\begin{equation} \label{eq_ke1}
\int_0^t\int_{{\mathcal F}(s) }(\partial_t u + u \cdot \nabla u )\cdot u =  \dfrac{1}{2} \left[\int_{\mathcal F (s)}  |u|^2 \right]_{s=0}^{s=t}\,,
\end{equation}
Consequenlty 
\begin{eqnarray}
 \label{eq_flux2}
\dfrac{1}{2} \int_{\mathcal F (t)}  |u|^2+\mu \int_0^t\int_{\mathcal F (s)} |\nabla u|^2=\dfrac{1}{2} \int_{\mathcal F^0}  |u^0|^2 -\int_0^t \int_{0}^{L} \phi(u,p,h) \partial_t h.
\end{eqnarray}
If we now multiply the beam equation \eqref{eq_elasticite} by $\partial_t h$ we obtain, after time and space integration by parts,
\begin{multline} \label{eq_ke2}
 \dfrac{1}{2} \left[\int_{0}^{L} \left( \rho_s|\partial_{t} h|^2 + \alpha |\partial_{xx} h|^2 + \beta |\partial_{x} h|^2 \right)\right]_{s=0}^{s=t} 
+ \gamma \int_{0}^t \int_{0}^{L} |\partial_{tx} \eta|^2= \int_0^t  \int_{0}^{L} \phi(u,p,h) \partial_t h \,.
\end{multline}
By summing   \eqref{eq_flux2} and \eqref{eq_ke2}, we obtain the expected result.
\end{proof}

\subsection{Distance estimate}  \label{sec_distance} The aim of this section is to prove the following proposition
\begin{proposition} \label{prop_distest}
There exists a constant $C_{0}$ depending only on initial data  for which:
$$
\sup_{t  \in (0,T)} \Big(\gamma\| h(t,\cdot) \, ; \, H^2_{\sharp}(0,L)\|^2 + \|h^{-1}(t,\cdot) \, ; \, L^1_{\sharp}(0,L)\| \Big) +\alpha  \int_0^T\| h(t,\cdot) \, ; \, H^3_{\sharp}(0,L) \|^2 {\rm d}t \leq C_{0}(1+T)\,.
$$
\end{proposition}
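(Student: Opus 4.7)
The plan is to adapt to the full coupled system the formal argument developed for the toy model \eqref{eq_rho1}--\eqref{eq_rho3}. Testing the beam equation \eqref{eq_elasticite'} against $-\partial_{xx}h$ produces (by integration by parts in $x$, using periodicity, and since $\int_0^L\partial_{xx}h\,dx = 0$ so that $M_s$ drops out when paired with $\partial_{xx}h$) the identity
$$\frac{d}{dt}\int_0^L\Big(\frac{\gamma}{2}|\partial_{xx}h|^2 - \rho_s\partial_t h\,\partial_{xx}h\Big) dx + \int_0^L\Big(\beta|\partial_{xx}h|^2 + \alpha|\partial_{xxx}h|^2\Big) dx = \rho_s\int_0^L |\partial_{tx}h|^2 dx - \int_0^L \phi\,\partial_{xx}h\,dx.$$
To cancel the $\phi$ source, I would test \eqref{eq_NS} against a divergence-free extension $w$ of the boundary data $-\partial_{xx}h\,e_2$ on $\{y = h\}$ and $w = 0$ on $\{y=0\}$, for instance via the stream function $\psi(x,y,t) = -\partial_x h(x,t)\,\varphi(y/h(x,t))$ with $\varphi\in C^\infty([0,1])$ satisfying $\varphi(0)=0$, $\varphi(1)=1$, $\varphi'(0)=\varphi'(1)=0$ (e.g.\ $\varphi(s) = 3s^2-2s^3$), and setting $w = \nabla^\perp\psi$. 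By Proposition~\ref{prop_cdv}, $w\in H^1_\sharp(\mathcal F(t))$ with norm controlled by $\|\partial_{xx}h\|_{L^2}$ and $\|\partial_t w\|_{L^2_\sharp(\mathcal F(t))}$ by $\|\partial_{tx}h\|_{L^2}$, with constants depending only on $R_0(t) := \|h; H^2_\sharp\| + \|h^{-1}; L^\infty_\sharp\|$.

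The $\phi$ contributions then cancel upon summation: indeed, the only surviving boundary term in the stress integration by parts is located on the top of $\mathcal F(t)$, and a direct computation using the definition \eqref{eq_phi} shows it equals $+\int_0^L \phi\,\partial_{xx}h\,dx$, while the pressure drops out because $\div w = 0$. The resulting identity carries the fluid remainders $\int_{\mathcal F}\rho_f\partial_t u\cdot w$, $\int_{\mathcal F}\rho_f (u\cdot\nabla u)\cdot w$, and $2\mu\int_{\mathcal F}D(u)\!:\!\nabla w$ on the right-hand side. The heart of the argument is then to extract a $-\rho_f\frac{d}{dt}\int_0^L\frac{1}{2h}dx$ contribution from $\int_{\mathcal F}\rho_f\partial_t u\cdot w$: applying Reynolds' transport theorem and the kinematic condition $u|_{top} = \partial_t h\,e_2$, together with the explicit form of $\psi$ (which was designed precisely so that the vertical traces and the $y/h$-scaling couple $u_2|_{top} = \partial_t h$ to the $1/h$ weight), I expect this term to produce exactly the growth rate of $\int_0^L 1/h$, in perfect analogy with the toy model identity $-\int_0^L q\,\partial_{xx}b = -\tfrac{d}{dt}\int_0^L 1/(2b)$. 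Moved to the left-hand side, this yields the full analogue of \eqref{est_dist}, augmented by additional remainder terms.

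Those remaining remainders are then bounded via H\"older, Sobolev embeddings in $(0,L)$ (in particular $H^2_\sharp\hookrightarrow C^{1,1/2}_\sharp$), and the energy bounds of Proposition~\ref{prop_estkin} on $\|\nabla u\|_{L^2(\mathcal F(t))}$, $\|\partial_{tx}h\|_{L^2}$, $\|\partial_t h\|_{L^2}$, and $\|h\|_{H^2}$; the cross-term $\rho_s\partial_t h\,\partial_{xx}h$ is absorbed by $\gamma|\partial_{xx}h|^2 + \rho_s|\partial_t h|^2$ via Young's inequality. Integration on $(0,T)$ and a Gronwall argument, combined with the appendix lemma controlling $\|h^{-1}; L^\infty\|$ by $\|h; H^2\|$ and $\|h^{-1}; L^1\|$, will finish the proof. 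The main obstacle is extracting the $\frac{d}{dt}\int_0^L 1/h$ contribution from the fluid side through the precise structure of $w$, as this is the Navier--Stokes analogue of the Reynolds-equation-driven identity that drove the toy model. A secondary difficulty is that all remainder estimates depend on $R_0(t)$ itself, so closing Gronwall requires both $\alpha>0$ and $\gamma>0$, which propagate enough regularity on $h$ (in $L^\infty_t H^2_x \cap L^2_t H^3_x$) and on $\partial_t h$ (in $L^2_t H^1_x$) to keep Proposition~\ref{prop_cdv} uniformly applicable on $(0, T)$.
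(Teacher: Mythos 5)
Your setup (the multiplier $w=\nabla^\perp\psi$ with $\psi=\partial_x h\,\chi_0(y/h)$ up to sign, cancellation of the $\phi$-terms against the beam equation tested with $\partial_{xx}h$, and the vanishing of the fluid pressure since $\div w=0$) matches the paper. But the heart of your argument is misplaced: you expect the term $\rho_f\int_{\mathcal F}\partial_t u\cdot w$ to generate $-\tfrac{d}{dt}\int_0^L \tfrac{1}{2h}$. It does not, and it cannot: by the energy estimate that term is merely $O\bigl(1+[\int_0^L h^{-1}]^{1/4}\bigr)$ after a Cauchy--Schwarz argument, i.e.\ it is a small perturbation of the distance functional, not its source (this is exactly how the paper treats it, see \eqref{est2_tech1}). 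The distance term in the paper comes instead from the \emph{viscous} term $2\mu\int_{\mathcal Q_t}D(u):D(w)$: one introduces an auxiliary ``lubrication'' pressure associated with $w$, namely $q=q_s+\partial_{xy}\psi$ with $q_s(x,t)=6\bigl[|h(0,t)|^{-2}-|h(x,t)|^{-2}\bigr]$ (see \eqref{def_q}--\eqref{def_qs}), writes $2\mu\int D(u):D(w)=\int(2\mu D(w)-qI_2):D(u)$, integrates by parts, and the boundary contribution $-\mu\int_0^t\int_0^L\partial_t h\,q_s$, combined with $\int_0^L\partial_t h=0$, produces exactly $\bigl[\int_0^L 6\mu/h\bigr]_{s=0}^{s=t}$ (see \eqref{term_vis}--\eqref{term_p}). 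This mirrors the toy model, where the $1/b$ term comes from the Reynolds pressure (a viscous, thin-film object), and is reflected in the coefficient $6\mu$ — not $\rho_f$ — in \eqref{eq_testw_red}. Without this construction of $q$ your scheme has no mechanism to produce the lower bound on $h$.

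A secondary but real problem is your plan to close the estimate with a Gronwall argument while letting the remainder constants depend on $R_0(t)=\|h;H^2_\sharp\|+\|h^{-1};L^\infty_\sharp\|$: since $\|h^{-1};L^\infty\|$ is precisely what the proposition is meant to control (via Proposition \ref{prop_controlparH2}), any remainder bound invoking it is circular. The paper avoids this by estimating the stream function through Propositions \ref{prop_below1}--\ref{prop_below2} and \ref{propo:psi}, whose right-hand sides involve only energy-controlled quantities, $\int_0^T\|\partial_{xxx}h\|^2$, and $\bigl[\int_0^L h^{-1}\bigr]^{1/4}$; all such terms are then absorbed into the left-hand side by choosing the $\varepsilon$'s small, with no Gronwall lemma and no a priori lower bound on $h$ needed.
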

The remainder of this paragraph is devoted to the proof of this result. Let us consider $0\leq t\leq T$.  In all what follows $C_0$ denotes a constant depending only on the initial data but which may change between lines.
Let $w = \nabla^{\bot} \psi = (-\partial_y \psi, \partial_x \psi)$ where:
$$
\psi(x,y,t) = \partial_{x}h(x,t) \chi_0\left(\dfrac{y}{h(x,t)}\right)\,, \quad \forall \, (x,y,t) \in \mathcal Q_T\,,
$$
with: 
$$
\chi_0(z) = z^2(3-2z) \,, \quad   \forall \, z \in (0,1)\,.
$$
Combining the regularity of $h$ (which implies that $h \in C([0,T];H^3_{\sharp}(0,L)) \cap H^1(0,T;H^2_{\sharp}(0,L))$ thanks to 
\cite[Theorem 3.1]{LM72} see \eqref{reg_h2}) 
with $\chi_0 \in C^{\infty}([0,1])$ we obtain that  $w\in H^1(\mathcal Q_T)$ and $\nabla^2 w\in L^2(\mathcal Q_T).$ This regularity is enough to justify all computations below as $(u,p,h,c)$ is a strong solution.  Moreover,   $w$ is divergence free by construction and, since $\chi_0(1)=0$ and $\chi_0'(1) = \chi_0(0)=\chi_0'(0)=0$, there holds:
$$
\begin{array}{rcll}
w(x,h(t,x),t) &=&   \partial_{xx} h(t,x) e_2, & \text{ for all $x \in (0,L)$ and $t  \in (0,T),$ }     \\[4pt]
w(x,0,t) &=& 0\,,  &\text{ for all $x \in (0,L)$ and $t \in (0,T)$}\,.
\end{array}
$$

Consequently, we multiply \eqref{eq_NS} by $w$ and \eqref{eq_elasticite} by {$\partial_{xx} h $} and integrate on $\mathcal Q_t$ for arbitrary $t<T$. 
We get after integration by parts (note that the terms involving the fluid/beam interactions cancel out since the structure test function is the trace of the fluid test function on the interface):
\begin{eqnarray} 
 -\int_{\mathcal Q_t} \rho_f (\partial_{t} u + u \cdot \nabla u ) \cdot w - 2 \mu \int_{\mathcal Q _t} D(u) : D(w) 
+ \int_0^t \int_{0}^{L} \left( \beta |\partial_{xx} h|^2 + \alpha |\partial_{xxx} h|^2\right)\nonumber\\
+\left[ \int_{0}^{L} \left( \frac{\gamma}{2} |\partial_{xx} h|^2 -  \rho_s\partial_{t} h \, \partial_{xx} h \right) \right]_{s=0}^{s=t}  =\int_0^t\int_0^L  \rho_s|\partial_{tx} h|^2.\label{eq_testw}
\end{eqnarray}
We first show that this identity leads to an estimate that is comparable to \eqref{est_dist} up to remainder
terms we shall bound afterwards.  The term that will enable us to bound $h^{-1}$  is $2 \mu \int_{\mathcal Q _t} D(u) : D(w) $.
 To deal with this term, we introduce a well chosen pressure
\begin{equation}
\label{def_q}
q(x,y,t) := q_s(x,t) + \partial_{xy}\psi(x,y,t) \,, \quad q_s(x,t) := - \int_{0}^{x} \partial_{yyy} \psi(s,y,t) {\rm d}s, \quad \forall \, (x,y,t) \in \mathcal Q_T\,.
\end{equation}
An easy computation gives
$\displaystyle\partial_{yyy} \psi(s,y,t) = - 12 \frac{\partial_x h(x, t)}{(h(x,t))^3},$
so that $q_s$ satisfies
\begin{equation}
\label{def_qs}
q_s(x,t) = 12 \int_0^{x} \dfrac{\partial_x h(s,t)}{(h(s,t))^3}{\rm d}s = 6 \left[  \dfrac{1}{|h(0,t)|^2}- \dfrac{1}{|h(x,t)|^2} \right] \,, \quad \forall \, (x,t) \in (0,L) \times (0,T)\,. 
\end{equation}
In particuliar, $q_s$ does not depend on $y.$  Furthermore $\nabla q\in L^2( \mathcal Q_T)$. Applying again the fact that $w$ is divergence-free,
we obtain:
\begin{eqnarray}
2 \int_{\mathcal Q _t} D(u) : D(w) &=& \int_{\mathcal Q _t} (2 D(w) - q  I_2) : D(u) \nonumber\\ 
							&=& \int_0^t \int_{\partial \mathcal F (s)} (2D(w)  - q I_2 ) n \cdot u  - \int_{\mathcal Q _t}  ({\Delta} w - \nabla q ) \cdot u \label{forcevisqueuse}\,. 
\end{eqnarray}
Note, that all the terms make sense thanks to the regularity of $(w, q)$.
In this last identity, by  definition \eqref{def_q} of $q$, we have:
\begin{eqnarray*}
\notag \int_{\mathcal Q _t}  ({\Delta} w - \nabla q ) \cdot u &=& \int_{\mathcal Q _t} \partial_{xxx} \psi \: u_2 - 2 \partial_{yxx} \psi \: u_1 \\
\notag			&=& \int_{0}^t  \int_{\partial \mathcal F(s)}\left( n_1 \partial_{xx} \psi u_2 - 2 n_2 \partial_{xx} \psi u_1 \right){\rm d} \sigma -  \int_{\mathcal Q _t} \partial_{xx} \psi (  \partial_x u_2 -  2\partial_{y} u_1) \\
			&=& -  \int_{0}^t\!\! \int_{0}^{L}\partial_{xx} \psi(x,h(x,s),s)  \partial_{t} h(x,s) \partial_{x} h(x,s){\rm d}x{\rm d}s
			  - \int_{\mathcal Q _t} \partial_{xx} \psi (  \partial_x u_2 -  2\partial_{y} u_1)\,.
\end{eqnarray*}
Similarly, the other term of \eqref{forcevisqueuse}  writes
\begin{multline*}
 \int_0^t \int_{\partial \mathcal F (s)} (2D(w)  - q I_2 ) n \cdot u \,\\
 = \int_{0}^t \int_{0}^{L} \partial_t h(x,t) \Big( (\partial_{yy} \psi(x,h(x,s),s) - \partial_{xx} \psi(x,h(x,s),s)) \partial_{x} h(x,s) \\+  \partial_{yx} \psi(x,h(x,s),s) - q_s(x,s) \Big) {\rm d}x{\rm d}s\,.
\end{multline*}
Differentiating the identity $\partial_y \psi(x,h(x,s), s) = 0$ (that holds true since $\chi_0'(1)=0$), with respect to $x$, yields
$$
\partial_{yx} \psi(x,h(x,s),s) + \partial_x h(x,s) \partial_{yy} \psi(x,h(x,s), s) = 0. 
$$
Consequently, we simplify:
\begin{multline*}
 \int_0^t \int_{\partial \mathcal F (s)} (2D(w)  - q I_2 ) n \cdot u \,
 = - \int_{0}^t \int_{0}^{L} \partial_t h(x,t)\partial_{xx} \psi(x,h(x,s),s)) \partial_{x} h(x,s) {\rm d}x{\rm d}s  \\ -   \int_{0}^t \int_{0}^{L}  \partial_t h(x,s) q_s(x,s)  {\rm d}x{\rm d}s\,.
\end{multline*}
Combining the computations of both terms in \eqref{forcevisqueuse}, we obtain finally:
\begin{equation}
\label{term_vis}
2 \int_{\mathcal Q _t} D(u) : D(w)  = - \int_{0}^t \int_{0}^{L} \partial_t h(x,s)q_s(x,s)  {\rm d}x{\rm d}s + \int_{\mathcal Q _t} \partial_{xx} \psi (  \partial_x u_2 -  2\partial_{y} u_1).
\end{equation}
At this point, we replace $q_s$ by its explicit value (see \eqref{def_qs}) and  by remembering that the average of $\partial_t h$  is zero, we obtain 
\begin{equation}
\label{term_p}
 \int_{0}^t \int_{0}^{L} \partial_t h(x,s)q_s(x,s)  {\rm d}x{\rm d}s  = -6\int_0^t \int_{0}^{L}  \dfrac{\partial_t h(x,s)}{|h(x,s)|^2}  {\rm d}x{\rm d}s = \left[ \int_{0}^{L} \dfrac{6}{h(x,s)} {\rm d}x\right]_{s=0}^{s=t}\,.
\end{equation}
Consequently, from \eqref{term_vis}, \eqref{term_p}, the equality \eqref{eq_testw} reduces to:
\begin{multline} \label{eq_testw_red}
 \left[ \int_{0}^{L} \left( \frac{\gamma}{2} |\partial_{xx} h|^2 -  \rho_s\partial_{t} h \, \partial_{xx} h + \dfrac{6\mu}{h} \right)  \right]_{s=0}^{s=t}
  + \int_0^t \int_{0}^{L} \left( \beta |\partial_{xx} h|^2 + \alpha |\partial_{xxx} h|^2 \right) \\
= 
\int_0^t \int_{0}^{L} \rho_s |\partial_{tx} h|^2 { +  \mu \int_{\mathcal Q _t} \partial_{xx} \psi (  \partial_x u_2 -  2\partial_{y} u_1)
+ \int_{\mathcal Q _t} \rho_f(\partial_{t} u + u \cdot \nabla u ) \cdot w}
\end{multline}
We recognize in the left-hand side of this equality the quantities that we want to estimate as in \eqref{est_dist}. 
Compared to \eqref{est_dist}, we have two additional terms 
$$ 
 T_1=\mu \int_{\mathcal Q _t} \partial_{xx} \psi (  \partial_x u_2 -  2\partial_{y} u_1)\,, \quad T_2= \int_{\mathcal Q _t} \rho_f(\partial_{t} u + u \cdot \nabla u ) \cdot w\,.
 $$
To bound these terms, we need precise estimates on the stream-function $\psi$ that are gathered in Appendix \ref{app_psi}.

\medskip

First, $T_1$ is bounded by applying  Proposition \ref{propo:psi} and energy estimate \eqref{decay_ec}
\begin{eqnarray*}
 \left| \int_{\mathcal Q _t} \partial_{xx} \psi (  \partial_x u_2 -  2\partial_{y} u_1)\right| & \leq &  C\|\partial_{xx} \psi \: ;\:  L^2(\mathcal Q _t)\| \|\nabla u \:; \: L^2(\mathcal Q _T)\|\,, \\ 
&\leq&  C_0 \Bigl( \int_{0}^t \Bigl[  \|h ; L^{\infty}_{\sharp}(0, L)\| \|\partial_{xxx} h ; L^2_{\sharp}(0, L)\|^2  \\
&& +  \|\partial_{xx} h ; L^2_{\sharp}(0, L)\|^{\frac{3}{2}}\|\partial_{xxx}h ; L^2_{\sharp}(0, L)\|^{\frac{3}{2}}\Bigr]\,\Bigr)^{\frac{1}{2}}, \notag
\end{eqnarray*}
with a constant $C_0$ depending only on the initial data.  Once again, using the energy estimate \eqref{decay_ec}, we obtain
\begin{eqnarray}
\notag \left| \int_{\mathcal Q _t} \partial_{xx} \psi (  \partial_x u_2 -  2\partial_{y} u_1)\right| & \leq& C_0\Biggl[ \left( \sup_{t \in (0,T)}\| h ; L^{\infty}_{\sharp}(0,L)\|^{\frac{1}{2}}\right)  \left( \int_{0}^t \|\partial_{xxx} h ; L^2_{\sharp}(0,L)\|^2\right)^{\frac{1}{2}}  \\ 
\notag && \quad+  \left( \sup_{t \in (0,T)}\|\partial_{xx} h ; L^2_{\sharp}(0,L)\|^{\frac{3}{4}}\right)  \left( \int_{0}^t  \|\partial_{xxx} h ; L^2_{\sharp}(0, L)\|^{\frac{3}{2}}\right)^{\frac{1}{2}}  \Biggr] \\
\label{est2_tech0}&\leq& C_{0} (1+T)  + \varepsilon_1  \int_{0}^t \|\partial_{xxx} h ; L^2_{\sharp}(0, L)\|^2\,,
\end{eqnarray}
for arbitrary small $\varepsilon_1 >0$, that will be choosen later on. We note that $C_0$ depends on $\varepsilon_1$
{\em a priori} but the value of this parameter will be fixed to a universal constant afterwards. This remark is also valid when other $\varepsilon$'s that are introduced.

\medskip

Concerning $T_2$, taking into account the convection of the fluid domain by the fluid velocity, we have, 
\begin{equation}
\label{T2}
T_2=\int_{\mathcal Q _t} (\partial_{t} u + u \cdot \nabla u ) \cdot w = \left[ \int_{\mathcal F (s)} u(\cdot,s) \cdot w(\cdot,s) \right]_{s=0}^{s=t} - \int_{\mathcal Q _t} (\partial_t w + u \cdot \nabla w) \cdot u\,. 
\end{equation}
We bound the first term on the right-hand side  of \eqref{T2} using a Cauchy-Schwarz inequality.  Applying the energy estimate \eqref{decay_ec} and  the estimates \eqref{eq_dxpsi}-\eqref{eq_dypsi} on the gradient of the stream-function,  one gets
\begin{eqnarray} \notag
\left| \left[ \int_{\mathcal F (s)} u(\cdot,s) \cdot w(\cdot,s) \right]_{s=0}^{s=t}\right| &\leq& C_0 +   \|u(\cdot,t) ; L^2(\mathcal F(t)) \|  \|\nabla \psi(\cdot,t) ; L^2(\mathcal F(t)) \|\,\\
\notag & \leq & C_0 \left( 1  +  \left[ \int_{0}^{L} \dfrac {{\rm d}x}{h(x,t)}   \right]^{\frac{1}{4}} \right)\\
\label{est2_tech1}& \leq &{C_0}+ \varepsilon_2   \int_{0}^{L} \dfrac {{\rm d}x}{h(x,t)} \,, 
\end{eqnarray}
for arbitrary small $\varepsilon_2 >0.$ 

\medskip

For the second term of the right hand side of \eqref{T2}, we first integrate by parts in space. Since $\psi(x, h(x, s), s) =\partial_x h(x, s)$ and $\partial_y \psi(x,h(x,s),s) = 0,$ we have $\partial_t \psi(x,h(x,s),s) = \partial_{tx} h(x,s)$
so that:
\begin{eqnarray*}
\int_{\mathcal Q _t} u \cdot \partial_t w &=& - \int_{0}^t \int_{0}^{L} \partial_t h(x,s)  \partial_x h(x,s) \partial_t \psi(x,h(x,s),s){\rm d}x {\rm d}s + \int_{\mathcal Q _t}  (\partial_y u_1-\partial_x u_2 ) \partial_t\psi\\
&=&- \int_{0}^t \int_{0}^{L} \partial_t h(x,s) \partial_x h(x,s) \partial_{tx} h(x, s) {\rm d}x {\rm d}s + \int_{\mathcal Q _t}  (\partial_y u_1-\partial_x u_2 ) \partial_t\psi.
\end{eqnarray*}
Applying the energy estimate \eqref{decay_ec} and using the following 1D embedding inequality 
$$
\|\partial_{x} h ; L^{\infty}_{\sharp}(0, L)\| \leq C \|\partial_{xx} h ; L^{2}_{\sharp}(0, L)\|,
$$
we can estimate the boundary term
\begin{eqnarray}
\notag \left| \int_{0}^t \int_{0}^{L} \partial_t h(x,s) \partial_x h(x,s) \partial_{tx} h(x, s) {\rm d}x {\rm d}s\right|
&\leq& \int_{0}^t  \|\partial_{t} h ; L^2_{\sharp}(0, L)\| \|\partial_{x} h ; L^{\infty}_{\sharp}(0, L)\|  \|\partial_{tx} h ; L^2_{\sharp}(0,L)\| \\
\notag & \leq & C\int_{0}^t  \|\partial_{xx} h ; L^{2}_{\sharp}(0,L)\|  \|\partial_{tx} h ; L^2_{\sharp}(0,L)\|^2 \\[4pt]
\label{est2_tech2}&\leq& C_0 \,. 
\end{eqnarray}
Taking into account \eqref{eq_dtpsi} and the energy estimate \eqref{decay_ec}, we now bound the second term by
\begin{eqnarray}
\notag \left| \int_{\mathcal Q _t}  (\partial_y u_1- \partial_x u_2) \partial_t\psi\right|
&\leq &  C_0 \|\partial_t \psi ; L^2(\mathcal Q_t)\|\,\\
\notag & \leq & C_0 \left[ \int_0^t  \left( \|\partial_{tx} h ; L^2_{\sharp}(0,L)\|^2 + \|\partial_{xxx} h ; L^2_{\sharp}(0,L)\|\right)\right]^{\frac 12}\\
\label{est2_tech3}& \leq & C_{0}(1+T)+ \varepsilon_3 \int_0^t  \|\partial_{xxx} h ; L^2_{\sharp}(0,L)\|^2 \,,
\end{eqnarray}
where $\varepsilon_3>0$  will be chosen later on.

\medskip

Finally for the last term in the right hand side of \eqref{T2}, we have, after space integration by parts:
\begin{equation}
\label{dist:conv}
\int_{\mathcal Q _t} u \cdot \nabla w \cdot u 
= \int_0^t \int_{0}^{L} |\partial_t h(x,s)|^2 \partial_{xx} h(x,s) {\rm d}x {\rm d}s - \int_{\mathcal Q _t} u \cdot \nabla u \cdot w 
\,,\\
\end{equation}

Concerning the boundary integral in \eqref{dist:conv}, we apply \eqref{decay_ec} to show the following estimate
\begin{eqnarray}
\notag
 \left|\int_0^t \int_{0}^{L} |\partial_t h(x,s)|^2 \partial_{xx} h(x,s){\rm d}x {\rm d}s\right| &  \leq &  C \int_0^t \left( \|\partial_{xx} h(\cdot,s) ; L^2_{\sharp}(0,L)\| \|\partial_t h(\cdot,s) ; L^{\infty}_{\sharp}(0,L)\|^2 \right){\rm d}s
\\ &\leq& \sup_{t \in (0,T)} \|\partial_{xx} h ; L^2_{\sharp}(0,L)\| \quad \int_0^t \|\partial_{tx} h ; L^2_{\sharp}(0,L)\|^2 \leq C_0 \,.\label{boundary_T2}  
\end{eqnarray}
Moreover for the volume integral in the right hand side of \eqref{dist:conv}, we have
\begin{eqnarray*}
\left|\int_{\mathcal Q _t} u \cdot \nabla u \cdot w  \right|
& \leq & \int_0^t \int_{0}^{L} \left(\left( \int_{0}^{h(x,s)} |u|^2\right)^{\frac{1}{2}} \left(\int_{0}^{h(x,s)} |\nabla u|^2\right)^{\frac{1}{2}} \sup_{y \in (0,h(x,s))} |w(x,y,s)|\right){\rm d}x {\rm d}s.
\end{eqnarray*}
From  \eqref{eq_nablapsi}  we know that the following pointwise estimate holds true
$$
|w(x,y,s)| \leq C\left( |\partial_{xx} h(x,s)| + \dfrac{|\partial_{x} h(x,s)|}{h(x,s)}  +  \dfrac{|\partial_{x} h(x,s)|^2}{h(x,s)}\right) \,, \quad \forall \, (x,y) \in \mathcal F(s).
$$
Thus we define $$I_1=\int_0^t \int_{0}^{L} \left(\left( \int_{0}^{h(x,t)} |u|^2\right)^{\frac{1}{2}} \left(\int_{0}^{h(x,s)} |\nabla u|^2\right)^{\frac{1}{2}}   |\partial_{xx} h(x,s)|\right){\rm d}x {\rm d}s,$$ 
$$I_2=\int_0^t \int_{0}^{L} \left(\left( \int_{0}^{h(x,s)} |u|^2\right)^{\frac{1}{2}} \left(\int_{0}^{h(x,s)} |\nabla u|^2\right)^{\frac{1}{2}} \dfrac{|\partial_{x} h(x,s)|}{h(x,s)}\right){\rm d}x {\rm d}s$$
 and  
$$I_3=\int_0^t \int_{0}^{L} \left(\left( \int_{0}^{h(x,s)} |u|^2\right)^{\frac{1}{2}} \left(\int_{0}^{h(x,s)} |\nabla u|^2\right)^{\frac{1}{2}}  \dfrac{|\partial_{x} h(x,s)|^2}{h(x,t)}\right){\rm d}x {\rm d}s.$$

We now take care of each quantity.
Applying the 1D embedding inequality 
$$
\|\partial_{xx} h ; L^{\infty}_{\sharp}(0,L)\| \leq C \|\partial_{xxx} h ; L^2_{\sharp}(0,L)\|
$$
and the energy estimate \eqref{decay_ec}, we obtain:
\begin{eqnarray*}
I_1& \leq &  C\int_0^t \left( \left( \int_{\mathcal F (s)}|u|^2\right)^{\frac{1}{2}} \left(\int_{\mathcal F (s)} |\nabla u|^2\right)^{\frac{1}{2}}   \|\partial_{xx} h ; L^{\infty}_{\sharp}(0,L)\|   \right){\rm d}s \\
      & \leq & C_0 \int_0^t\left(\left(\int_{\mathcal F (s)} |\nabla u|^2\right)^{\frac{1}{2}}  \|\partial_{xxx} h ; L^{2}_{\sharp}(0,L)\|   \right){\rm d}s\\
      & \leq& C_0 \| \nabla u ; L^2_{\sharp}(\mathcal Q_T)\| \left[ \int_0^t \|\partial_{xxx} h ; L^2_{\sharp}(0,L)\|^2 {\rm d}s \right]^{\frac 12} \\
      & \leq & \dfrac{C_0}{\varepsilon_4} + \varepsilon_4  \int_0^t \|\partial_{xxx} h ; L^2_{\sharp}(0,L)\|^2 {\rm d}s\,,
\end{eqnarray*}
for arbitrary small $\varepsilon_4 >0.$ 
We now take care of $I_2$. We note then that $u$ vanishes on $y=0$ so that  a Poincar\'e inequality yields:
$$
\left( \int_{0}^{h(x,s)} |u|^2\right)^{\frac{1}{2}} \leq C  h(x,s) \left( \int_{0}^{h(x,s)} |\nabla u|^2\right)^{\frac{1}{2}}.
$$
Using this bound to estimate $I_2$ we get 
\begin{eqnarray*}
I_2 
& \leq & C  \int_0^t \| \nabla u ; L^2_{\sharp}(\mathcal F(s))\|^2  \|{\partial_x h} ; L^{\infty}_{\sharp}(0,L)\|  \, {\rm d}s \leq C_0
\end{eqnarray*}
as $\|{\partial_x h} ; L^{\infty}_{\sharp}(0,L)\| \leq C\|{\partial_{xx} h} ; L^2_{\sharp}(0,L)\|$ which remains uniformly bounded in time (see \eqref{decay_ec}).
With similar arguments, we also prove $I_3 \leq C_0.$ This yields finally 

\begin{eqnarray*}
\left|\int_{\mathcal Q _t} u \cdot \nabla u \cdot w  \right|
& \leq &  C_0 + \varepsilon_4 \int_0^t \|\partial_{xxx} h ; L^2_{\sharp}(0,L)\|^2 {\rm d}s  
\end{eqnarray*}
and thus, taking into account \eqref{boundary_T2}
\begin{equation} \label{est2_tech4}
\left| \int_{\mathcal Q _t} u \cdot \nabla w \cdot u \right| \leq  C_0 + \varepsilon_4 \int_0^t \|\partial_{xxx} h ; L^2_{\sharp}(0,L)\|^2 {\rm d}s\,.   
\end{equation}
Finally, $T_2$ can be bounded, thanks to \eqref{est2_tech1}, \eqref{est2_tech2}, \eqref{est2_tech3}, \eqref{est2_tech4}, as
\begin{equation}
\label{est_T2}
|T_2|\leq C_{0}(1+T) +\varepsilon_5 \int_0^t   \|\partial_{xxx} h ; L^2_{\sharp}(0,L)\|^2 + \varepsilon_2   \int_{0}^{L} \dfrac {{\rm d}x}{h(x,t)} ,
\end{equation}
where $\varepsilon_5$ and $\varepsilon_2$ are to be chosen small enough.

\medskip

Combining \eqref{est_T2} and \eqref{est2_tech0} to bound the right-hand side of \eqref{eq_testw_red} and taking into account
\eqref{decay_ec} to bound the remaining terms on the right-hand side depending on $h$ that might be concerned, we get, for any $t\leq T$,
\begin{multline*}
 \int_{0}^{L} \left( \frac{\gamma}{2} |\partial_{xx} h(\cdot,t)|^2 + \dfrac{6\mu}{h(t,\cdot)} \right)
  + \int_0^t \int_{0}^{L} \left( \beta |\partial_{xx} h|^2 + \alpha |\partial_{xxx} h|^2 \right) \\[8pt]
\begin{array}{rcl}
&\leq& 
C_{0 }(1+T)+ \rho_s \|\partial_t h(\cdot,t) ; L^2_{\sharp}(0, L)\| \| \partial_{xx} h(\cdot,t) ; L^2_{\sharp}(0,L)\| \\[8pt]
&&\quad 
+ \displaystyle{\rho_s\int_0^t \int_{0}^{L}} |\partial_{tx} h|^2 +\varepsilon \left( \int_{0}^{L}\dfrac{6\mu}{h(\cdot,t)}  + \int_0^t \|\partial_{xxx} h ; L^2_{\sharp}\|^2 {\rm d}s\right)\, \\[10pt]
&\leq &C_{0}(1+ T)+\varepsilon \left( \displaystyle\int_{0}^{L}\dfrac{6\mu}{h(\cdot,t)}  + \int_0^t \|\partial_{xxx} h ; L^2_{\sharp}(0, L)\|^2 {\rm d}s\right)\,.
\end{array}
\end{multline*}
for some arbitrary small $\varepsilon >0$. We conclude the proof of the distance estimate  of Proposition \ref{prop_distest} by choosing $\varepsilon$ small enough.

\begin{rem} $\phantom{23}$ \\[-10pt]
\begin{itemize}
\item The  derived distance estimate relies strongly on the fact that the beam motion is only transverse and that we control the curvature of the elastic boundary. Indeed, we need to have bounds on the deformation $h$ in $L^\infty(0, T; H^2_\sharp(0,L))$ and in $L^2(0, T; H^3_\sharp(0,L))$  to control remainder terms. Both norms are bounded because $\alpha > 0,$ even if the first one is controlled via the energy bound  while the second one is controlled simultaneously  with  $h^{-1}$. \\
\item To prove our distance estimate we need to control  $\partial_t \eta$ in $L^2(0, T; H^1_\sharp(0, L)).$ This is the reason why we assume $\gamma>0$. One may wonder whether the fluid dissipation would be sufficient.  A priori, from the $L^2(0, T; H^1_\sharp({\mathcal F}(t))$ bound of the fluid velocity we only get a control on $\partial_{t} h$ in  $L^2(0, T; H^{1/2}_\sharp(0, L))$, which is not enough.
\end{itemize}
\end{rem}

\subsection{Regularity estimate}
Combining Proposition \ref{prop_distest} with Proposition \ref{prop_controlparH2}, we obtain that $h^{-1} \in L^\infty(0, T; L^1_\sharp(0, L))$. 
More precisely, we have that there exists a non-decreasing function $R: [0,T^*) \to [0,\infty)$ bounded on all bounded subintervals of $[0,T^*)$ such that:
\begin{equation} \label{eq_Rt}
\|h^{-1}(\cdot,t) ; L^{\infty}(0, L)\| + \|h(\cdot,t) ; H^2(0,L)\| \leq R_t \quad \forall \, t \in [0,T^*).
\end{equation}
In particular, the beam never touches the bottom of the fluid cavity on bounded time intervals. This lower bound on  $h$ ensures that the elliptic regularity result derived in Section \ref{subsection:elliptique} applies and enables us to pursue further in order to prove global existence of strong solutions by choosing appropriate test functions. We obtain the following quantitative estimate:
\begin{proposition} \label{prop_estreg}
There exists a function $C^0_{\rm reg} : [0,T^*) \to [0,\infty)$ bounded on all bounded subintervals of $[0,T^*)$ such that:
\begin{equation}
\|u(\cdot,t); H^1_{\sharp}(\mathcal F(t))\|^2 + \|\partial_t h(\cdot,t) ; H^1_{\sharp}(0,L)\|^2 + \|h(\cdot,t) ; H^3_{\sharp}(0,L)\|^2  \leq  C^0_{\rm reg}(t)
\end{equation}
\end{proposition}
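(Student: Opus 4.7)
The plan is to extend the formal identity \eqref{reg_rho3} to the full coupled system \BFG\ so as to obtain, on any subinterval $[0,T]\subset [0,T^*)$, a differential inequality of the form
\begin{equation*}
\frac{{\rm d}}{{\rm d}t}\mathcal F(t) + \mathcal D(t) \leq C(R_T)\,\Psi\bigl(\mathcal F(t)\bigr),
\end{equation*}
where $\mathcal F(t)$ dominates $\|u(\cdot,t); H^1_\sharp(\mathcal F(t))\|^2 + \|\partial_t h(\cdot,t); H^1_\sharp(0,L)\|^2 + \|h(\cdot,t); H^3_\sharp(0,L)\|^2$, $\mathcal D(t)\geq 0$ collects dissipation terms, and $\Psi$ is continuous and superlinear. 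A Gronwall argument, using the \emph{a priori} bound $R_T$ provided by Proposition~\ref{prop_distest} and \eqref{eq_Rt}, then yields the announced time-dependent control $C^0_{\rm reg}(t)$.

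The cornerstone is the choice of multipliers, inspired by \cite{Cum-Tak07}. Introduce the material derivative $V:=\partial_t u + (u\cdot\nabla) u$, whose trace on the moving boundary $\{y=h(x,t)\}$ is $\partial_{tt}h\, e_2$ (and $0$ on $\{y=0\}$), by differentiation of the kinematic condition \eqref{eq_bc2}. Since $\div u=0$ one computes $\div V = \nabla u : (\nabla u)^\top$, which is nonzero in general, so $V$ itself is not an admissible test field. Following \cite{Cum-Tak07}, I set $\varphi := V - w$, where $w\in H^1_\sharp(\mathcal F(t))$ with $w|_{\partial\mathcal F(t)}=0$ is a Bogovskii-type lifting of the divergence $\nabla u:(\nabla u)^\top$; then $\varphi$ is divergence-free and has the same trace as $V$. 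Testing \eqref{eq_NS} with $\varphi$ and \eqref{eq_elasticite} with $\partial_{tt}h$, the divergence-freeness of $\varphi$ kills the pressure, and the matching boundary traces make the coupling terms $\int_0^L\phi(u,p,h)\partial_{tt}h$ cancel upon addition. Integration by parts in both space and time variables, combined with the Reynolds transport formula for the moving fluid domain, yields a differential identity whose principal part reads, schematically,
\begin{equation*}
\frac{{\rm d}}{{\rm d}t}\!\left[\mu\!\int_{\mathcal F(t)}\!|D(u)|^2 + \frac{\gamma}{2}\!\int_0^L\!|\partial_{xt}h|^2 - \beta\!\int_0^L\!\partial_{xx}h\,\partial_t h + \alpha\!\int_0^L\!\partial_{xx}h\,\partial_{xxt}h\right] + \rho_f\!\int_{\mathcal F(t)}\!|V|^2 + \rho_s\!\int_0^L\!|\partial_{tt}h|^2 = \mathcal R(t),
\end{equation*}
where $\mathcal R(t)$ gathers the remainders: moving-domain Reynolds transport terms, the $w$-cross contributions $\int V\cdot w$ and $\int D(u):D(w)$, the convective piece stemming from $\int D(u):D((u\cdot\nabla)u)$, and the sign-troublesome beam contributions $+\alpha\int|\partial_{xxt}h|^2 + \beta\int|\partial_{xt}h|^2$ produced by the $\partial_{tt}h$-test.

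The main obstacle is to control $\mathcal R(t)$ in terms of the dissipative LHS and controlled lower-order quantities, given that $h$ only enjoys $H^2_\sharp$ boundary regularity. The decisive tool is Lemma~\ref{lem_ellest}: viewing \eqref{eq_NS} at each $t$ as an inhomogeneous Stokes problem with source $-\rho_f V$ and boundary datum $\partial_t h\, e_2$, and invoking Proposition~\ref{prop_distest} together with \eqref{eq_Rt} to guarantee the hypothesis $\|h;H^2_\sharp\|+\|h^{-1};L^\infty_\sharp\|\leq R_T$, one obtains
\begin{equation*}
\|u;H^2_\sharp(\mathcal F(t))\| + \|p_0;H^1_\sharp(\mathcal F(t))\| \leq K^s(R_T)\bigl(\|V;L^2_\sharp(\mathcal F(t))\| + \|\partial_t h; H^{\frac 32}_\sharp(0,L)\|\bigr).
\end{equation*}
This in turn controls $\phi(u,p,h)$ via Corollary~\ref{cor_phi}, and the beam equation solved for its top-order term, $\alpha\partial_{xxxx}h = \phi + \beta\partial_{xx}h - \rho_s\partial_{tt}h + \gamma\partial_{xxt}h$, gives a bound on $\|h;H^4_\sharp\|$ in terms of $\|\partial_{tt}h;L^2_\sharp\|$, $\|\partial_{xxt}h;L^2_\sharp\|$ and $\|\phi;L^2_\sharp\|$. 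The $w$-cross terms and the convective remainder are estimated by the 2D Gagliardo--Nirenberg inequality $\|\nabla u;L^4\|^2\lesssim\|\nabla u;L^2\|\|u;H^2\|$, trace theorems, and the energy bound \eqref{decay_ec}, and the Reynolds transport boundary integrals are controlled similarly. The sign-troublesome beam remainders are absorbed by combining the beam equation (to trade $\partial_{xxt}h$ against $\partial_{tt}h$ and $\partial_{xxxx}h$) with Young's inequality against the dissipation $\rho_s\int|\partial_{tt}h|^2$; it is precisely at this step that the assumptions $\alpha>0$ and $\gamma>0$ become essential. A standard interpolation between the $L^2_t H^4_\sharp$ control of $h$, the $L^2_t L^2_\sharp$ control of $\partial_{tt}h$ and the $L^\infty_t H^1_\sharp$ control of $\partial_t h$ upgrades this information to the announced $L^\infty_t H^3_\sharp$ bound, and Gronwall closes the argument.
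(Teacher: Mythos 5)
Your overall scheme is the one the paper uses (a divergence-free correction of the material derivative for the fluid, $\partial_{tt}h$ for the beam, the elliptic Lemma \ref{lem_ellest} and Corollary \ref{cor_phi} to convert $\|u;H^2\|$ and $\|\phi\|$ into $\|\partial_t u+u\cdot\nabla u;L^2\|+\|\partial_t h;H^{3/2}\|$), and your Bogovskii correction of $V=\partial_t u+u\cdot\nabla u$ is a reasonable variant of the paper's multiplier $\partial_t u+\hat{\Lambda}\cdot\nabla u-u\cdot\nabla\hat{\Lambda}$ built from the lifted boundary velocity $\hat{\Lambda}$. The first genuine gap is your treatment of the bad beam term $\alpha\int_0^t\!\int_0^L|\partial_{txx}h|^2$ produced by the $\partial_{tt}h$-test. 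Your proposed absorption (``trade $\partial_{xxt}h$ against $\partial_{tt}h$ and $\partial_{xxxx}h$ via the beam equation, then Young against $\rho_s\int|\partial_{tt}h|^2$'') is circular: solving \eqref{eq_elasticite} for $\gamma\partial_{xxt}h$ brings in $\alpha\partial_{xxxx}h$, and your only way to bound $\|\partial_{xxxx}h;L^2\|$ is the beam equation again, which reintroduces $\gamma\partial_{xxt}h$ with no gain of smallness in the constants; moreover $\|\phi\|$ itself is controlled (Corollary \ref{cor_phi}) by $\|V\|+\|\partial_t h;H^{3/2}\|$ and $\|\partial_t h;H^{3/2}\|^2\lesssim\|\partial_{tx}h\|\,\|\partial_{txx}h\|$, so the quantity you are trying to absorb reappears on the right. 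What is needed is an \emph{independent} source of $\int_0^t\!\int|\partial_{txx}h|^2$ dissipation with a favorable sign: the paper obtains it by testing \eqref{eq_elasticite} with $-\partial_{txx}h$ (its Step 2), which yields $\gamma\int_0^t\!\int|\partial_{txx}h|^2$ on the left, and then adds a sufficiently large multiple of this estimate to the fluid/structure identity; this is exactly where $\gamma>0$ is used, and without it the $\varepsilon\|\partial_{txx}h\|^2$ terms generated by the elliptic estimate have nothing to be absorbed into.

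The second gap is the closure. You announce a differential inequality $\frac{{\rm d}}{{\rm d}t}\mathcal F+\mathcal D\leq C(R_T)\Psi(\mathcal F)$ with $\Psi$ \emph{superlinear} and claim Gronwall concludes; but a superlinear comparison ODE only controls $\mathcal F$ on a possibly small time interval (this is just the local-in-time theory again), and $R_T$ from Proposition \ref{prop_distest} bounds only $\|h;H^2_\sharp\|+\|h^{-1};L^\infty_\sharp\|$, not $\|\nabla u(\cdot,t);L^2\|$ pointwise in time, so it cannot linearize $\Psi$. The paper's estimate is organized so that every quartic remainder is written as $\bigl(\|\nabla u;L^2\|^2+\|\partial_{tx}h;L^2\|^2\bigr)\times\bigl(\mathcal E_{reg}+1\bigr)$, and the prefactor is integrable on $(0,T)$ by the energy estimate of Proposition \ref{prop_estkin}; the Gronwall lemma is then linear with an $L^1$-in-time coefficient, which is precisely what yields a bound on every bounded subinterval of $[0,T^*)$. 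You must restructure your remainder estimates to this form. Two further points deserve care in your variant: the cross term $\rho_f\int V\cdot w$ estimated with the plain $H^1$ Bogovskii bound gives $\|\nabla u\|\,\|V\|^2$, which is not absorbable (you need a negative-norm estimate $\|w;L^2\|\lesssim\|u\cdot\nabla u;L^2\|$ with a constant depending only on $R_0$, which has to be proved on these $H^2$-subgraph domains), and the trace of $\partial_t u$ on the moving boundary is not defined at the regularity of Definition \ref{def_strongsolution}, so the integrations by parts behind your ``principal part'' identity require the regularization/density argument of Lemma \ref{lem:vitesse}.
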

The remainder of this subsection  is devoted to the proof of this proposition.  We fix $T<T^*$ and construct the $C^0_{\rm reg}$ for $t \in [0,T].$
We split the proof into three steps. In the first one,  we aim at multiplying  the fluid equation and the structure equation by $\partial_t u$ and $\partial_{tt} h$ respectively. Yet, $\partial_t u$  is not an appropriate multipier of the fluid equation  since it does not take into account the motion of the fluid domain. 
A natural choice is then the total derivative $\partial_t u+ u \cdot \nabla u$, but this function is not divergence free.  So we  introduce a modified divergence free test function following ideas of \cite{Cum-Tak07}. This step requires to bound the fluid velocity in $H^2_\sharp({\mathcal F}(t))$ and consequently, thanks to the elliptic estimates derived in Section \ref{subsection:elliptique}, the structure velocity in $H_\sharp^{{3/2}}(0, L)$.  As we do not get estimates on this quantity in our first step, we need a second step to obtain a regularity estimate on the deformation of the beam.  This second estimate depends itself on the regularity of the applied fluid-force and thus on high-order norms of the fluid-velocity and the pressure. The final step consists in a well chosen combination of the two previous estimates in order to obtain the expected result.

\medskip 

\begin{rem}$\phantom{123}$ \\[-14pt] \label{rem_constantes}
\begin{itemize}
\item  As the function $R:[0,\infty) \mapsto (0,\infty)$ satisfies \eqref{eq_Rt} at-hand, when we apply below Proposition \ref{prop_cdv}, Lemma \ref{lem_ellest} or Corollary \ref{cor_phi}, the associated respective constants $K^e,K^s$ or $K^b$ 
define non-decreasing functions of time bounded on bounded subintervals of $[0,T^*).$\\

\item In the computations below, we denote by  $C_0$ a constant depending on inital data and by $C_0:[0,T^*) \rightarrow (0,\infty)$ a function that is bounded on all bounded subintervals of $[0,T^*).$ The value of this constant/function may vary between lines.  
\end{itemize}

\end{rem}

\medskip

\paragraph{\em Step 1 : $L^\infty(0, T; H^1)$ regularity estimate of the fluid and structure velocities }
As explained in the previous paragraph, we introduce a vector-field $\hat{\Lambda}(\cdot,t)$ which coincides with $u$ on $\partial{\mathcal F(t)}$ for all $t \in (0,T).$  and which satisfies
\begin{itemize}
\item  $\hat{\Lambda}$ is divergence free,
\item  $\nabla \hat{\Lambda} \in  L^2_{\sharp}(\mathcal Q_T)$ and $\nabla^2 \hat{\Lambda} \in  L^2_{\sharp}(\mathcal Q_T) $ with
\begin{eqnarray}
\label{eq_hatLambda0}
\| \hat{\Lambda}(\cdot,t) ; L^2_{\sharp}(\mathcal F(t))\| &\leq& C_0(t) \|\partial_{t} h(\cdot,t) ;  L^2_{\sharp}(0,L)\|\, , \\
 \label{eq_hatLambda1}
\|\nabla \hat{\Lambda}(\cdot,t) ; L^2_{\sharp}(\mathcal F(t))\| &\leq& C_0(t) \|\partial_{t} h(\cdot,t) ;  H^{1}_{\sharp}(0,L)\|\, , \\
\label{eq_hatLambda2}
 \|\nabla^2 \hat{\Lambda}(\cdot,t) ; L^2_{\sharp}(\mathcal F(t))\| &\leq&  C_0(t) 
 \|\partial_{t} h(\cdot,t) ;  H^{2}_{\sharp}(0,L)\|\,, 
 \end{eqnarray}
 for a.e. $t \in (0,T),$  
\item $\hat{\Lambda} = u$ and $\partial_2 \hat{\Lambda} = 0$ on $y = h(x, t)$, 
\item $\hat{\Lambda} = 0$ on $y=0\,.$
\end{itemize}
The construction of $\hat\Lambda$ is given in Appendix \ref{App:reg}. With the notations of this appendix, 
we have $C_0(t) = K^l(R_t)$ that is indeed a function which is bounded on all bounded subintervals of $[0,T^*).$

\medskip

Next we define $v$ as
$$
v := \partial_t u + \hat{\Lambda} \cdot \nabla u - u \cdot \nabla \hat{\Lambda}.
$$
Given $t \leq T,$ it is a  suitable multiplier for \eqref{eq_NS} on $\mathcal Q_t$ as it belongs to $L^2_{\sharp}(\mathcal Q_t)$. Indeed,  by classical Sobolev embedding, we have
\begin{eqnarray*}
\|\hat{\Lambda} \cdot \nabla u ; L^2(\mathcal Q_t)\| &\leq&  \int_0^t \|\hat{\Lambda} ; L^4 (\mathcal F(s))\| \| \nabla u ; L^4(\mathcal F(s))\| \\
& \leq & C _0(t) \int_0^t  \|\hat{\Lambda} ; H^1 (\mathcal F(s))\| \|  u ; H^2(\mathcal F(s))\|\, \\
&  \leq & C_{0}(t)  \sup_{s \in (0,t)} \|\partial_t h(\cdot,s); H^1_{\sharp}(0,L)\| \, \left( \| \nabla u ; L^2(\mathcal Q_t) \| + \|\nabla^2 u ; L^2(\mathcal Q_t)\|\right)\,.
\end{eqnarray*}
We note here that we have used the continuous embedding $L^4(\mathcal F(s)) \hookrightarrow H^1(\mathcal F(s)).$
A priori,  the constant associated with this embedding depends on the domain $\mathcal F(s)$ and thus on the deformation of the beam. However, going back in a fixed domain and interpolating the results of Proposition \ref{prop_cdv} (or extrapolating an equivalent version for the $L^4$-space that we skip for conciseness), we might prove that this constant is uniformly bounded locally in time, as  $\|h(\cdot,s) ; H^2_{\sharp}(0, L)\|$ and $\|h^{-1}(\cdot,s) ; L^{\infty}_{\sharp}(0, L)\|$ are bounded locally.  In the same way, in what follows,  we may use interpolation inequalities in $\mathcal F(s)$ for which the constant will be bounded by a locally bounded function of $\|h(\cdot,s) ; H^2_{\sharp}(0, L)\|$ and $\|h^{-1}(\cdot,s) ; L^{\infty}_{\sharp}(0, L)\|$ and thus by a locally bounded function of $R_s	$. Similarly, we obtain $u \cdot \nabla \hat{\Lambda} \in L^2(\mathcal Q_t)$. 

\medskip

So, we multiply \eqref{eq_NS} by $v$ on $\mathcal Q_t$ and obtain the following identity
\begin{equation}
\label{eq_mult_fluid}
\int_{\mathcal F(t)} \rho_f(\partial_t u + u \cdot \nabla u  ) \cdot (\partial_t u + \hat{\Lambda} \cdot \nabla u - u \cdot \nabla \hat{\Lambda} )
= 
\int_{\mathcal F(t)} {\rm div} \sigma \cdot   (\partial_t u + \hat{\Lambda} \cdot \nabla u - u \cdot \nabla \hat{\Lambda} )\,.
\end{equation}
Formally, the trace of $v$ on $y=h$ is equal to $\partial_{tt}h$. So we expect that this identity has to be combined by the structure equation multiplied
by $\partial_{tt}h.$ Again, $\partial_{tt} h$ is a suitable multiplier for \eqref{eq_elasticite} as it belongs to $L^2_{\sharp}((0,L) \times (0,t))$ 
and we obtain the following identity:
\begin{equation} \label{eq_mult_beam}
\int_{0}^{L} \rho_s|\partial_{tt} h|^2 -\int_{0}^{L} \phi(u,p,h) \partial_{tt} h  =-\alpha \int_0^L \partial_{xxxx}h \partial_{tt} h + \beta\int_0^L \partial_{xx}h \partial_{tt} h +\gamma\int_0^L \partial_{xxt}h \partial_{tt} h 
\end{equation}
To compute the right-hand sides of \eqref{eq_mult_fluid} and \eqref{eq_mult_beam}, we need more regularity than the one satisfied by the strong solution under consideration. Yet the following lemma holds true:
\begin{lemma} 
\label{lem:vitesse}
For any triplet $(w, q, b)$ satisfying the regularity assumptions of Definition \ref{def_strongsolution},  namely
$$
b \in H^2(0,T ; L^2_{\sharp}(0, L)) \cap L^2(0,T; H^4_{\sharp}(0, L)), 
$$
$$
w\in H^1_{\sharp}(\mathcal Q_T)\,,  \quad \nabla^2 w \in L^2_{\sharp}(\mathcal Q_T)\,, \quad
q \in L^2_{\sharp}(\mathcal Q_T)\,, \quad \nabla q\in L^2_{\sharp}(\mathcal Q_T)\,,
$$
and such that $w(x, h(x,t), t)= \partial_t b(x,t) \in L^2_{\sharp,0}(0,L)$ and $\div w=0$, the following identities are satisfied:
\begin{multline}
\label{egalite_vitesse_fluid}
\int_{\mathcal Q_t} {\rm div} \sigma(w, q) \cdot   (\partial_t w + \hat{\Lambda} \cdot \nabla w - w \cdot \nabla \hat{\Lambda} )=- \int_0^t\int_{0}^{L} \phi(w,q,h) \partial_{tt} b\\
-\dfrac \mu2  \int_{\mathcal F(t)} |\nabla w|^{2} +\dfrac \mu2 \int_{\mathcal F(0)} |\nabla w|^{2}(0) - 2\mu\int_{\mathcal  Q_t} D(w) : \left( [\nabla \hat{\Lambda}]^{\top} \nabla w + \nabla \hat{\Lambda} [ \nabla w ]^{\top} - D(w\cdot \nabla \hat{\Lambda}) \right),
\end{multline}
and
\begin{multline}
\label{egalite_vitesse_beam}
\alpha \int_0^t\int_0^L \partial_{xxxx}b \, \partial_{tt} b - \beta\int_0^t\int_0^L \partial_{xx}b \,\partial_{tt} b -\gamma\int_0^t\int_0^L \partial_{xxt}b\, \partial_{tt} b =\\
 \int_{0}^{L} \left(\dfrac{\gamma}{2} |\partial_{tx} b(x, t)|^2 - \beta \partial_{t} b(x,t) \partial_{xx} b(x,t) - \alpha \partial_{tx} b(x,t) \partial_{xxx} b (x,t) \right)\mathrm{d}x
\\
- \beta \int_0^t\int_{0}^{L} |\partial_{tx} b |^2 - \alpha\int_0^t \int_{0}^{L} |\partial_{txx} b |^2
\\ -\int_{0}^{L} \left(\dfrac{\gamma}{2} |\partial_{tx} b(x, 0)|^2 - \beta \partial_t b(x, 0)
\partial_{xx} b(x, 0)    - \alpha \partial_{tx}b(x,0)\partial_{xxx}  b(x,0)\right)\mathrm{d}x  .
\end{multline}
\end{lemma}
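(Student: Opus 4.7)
The plan is to derive each identity by formal integration by parts, and justify the computation by density: the stated regularity for $(w,q,b)$ is exactly that of a strong solution, so that the fields pulled back to $\Omega_1\times(0,T)$ by the change of variables of Proposition \ref{prop_cdv} can be approximated in their natural topologies by smooth functions. Since both identities are linear in the highest-order quantities $\partial_{tt}b$, $\partial_{xxxx}b$, $\nabla^2 w$ and $\nabla q$, they extend from smooth approximants by passing to the limit.

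For the beam identity \eqref{egalite_vitesse_beam} the computation is routine. Multiplying the beam operator $\alpha\partial_{xxxx}b-\beta\partial_{xx}b-\gamma\partial_{xxt}b$ by $\partial_{tt}b$ and integrating over $(0,L)\times(0,t)$, I would use $L$-periodicity to integrate by parts in $x$ (twice for the $\alpha$-term, once for the $\beta$- and $\gamma$-terms) and then in $t$, using the identities $\partial_{xtt}b=\partial_t\partial_{xt}b$ and $\partial_{xt}b\,\partial_{xtt}b=\tfrac12\partial_t|\partial_{xt}b|^2$. This produces the $\tfrac{\gamma}{2}|\partial_{xt}b|^2$ boundary contribution; the $\alpha$-term yields the boundary term $-\alpha\partial_{xxx}b\,\partial_{xt}b$ together with the dissipation $-\alpha\int|\partial_{xxt}b|^2$; and the $\beta$-term yields (after one more periodic integration by parts in $x$) the boundary term $-\beta\partial_{xx}b\,\partial_tb$ together with $-\beta\int|\partial_{xt}b|^2$, which matches the right-hand side of \eqref{egalite_vitesse_beam}.

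For the fluid identity \eqref{egalite_vitesse_fluid}, the key observations concern the multiplier $v:=\partial_tw+\hat\Lambda\cdot\nabla w-w\cdot\nabla\hat\Lambda$. First, $\div v=0$: the $\partial_tw$ contribution vanishes by $\div w=0$, and the remaining two terms produce $\partial_i\hat\Lambda_j\partial_jw_i-\partial_iw_j\partial_j\hat\Lambda_i$, which is zero after renaming summation indices (using also $\div w=\div\hat\Lambda=0$). Second, the trace of $v$ on $y=h$ is $\partial_{tt}b\,e_2$: differentiating $w(x,h(x,t),t)=\partial_tb\,e_2$ in time gives $\partial_tw=\partial_{tt}b\,e_2-\partial_th\,\partial_yw$ on the interface; the identity $\hat\Lambda|_{y=h}=\partial_th\,e_2$ gives $\hat\Lambda\cdot\nabla w=\partial_th\,\partial_yw$; and $\partial_y\hat\Lambda|_{y=h}=0$ together with $w_1|_{y=h}=0$ gives $w\cdot\nabla\hat\Lambda=0$. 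Summing, $v|_{y=h}=\partial_{tt}b\,e_2$, which is precisely the cancellation that makes $v$ the right multiplier. Integrating $\int_{\mathcal Q_t}\div\sigma(w,q)\cdot v$ by parts in space then produces the boundary contribution $-\int_0^t\!\int_0^L\phi(w,q,h)\,\partial_{tt}b$ (with the lateral sides absorbed by periodicity and the bottom by $v=0$), while the bulk term has no pressure contribution by $\div v=0$ and reduces to $-2\mu\int_{\mathcal Q_t}D(w):D(v)$.

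The main technical step is to transform $-2\mu\int_{\mathcal Q_t}D(w):D(v)$ into the stated form. Using the algebraic identity $D(\hat\Lambda\cdot\nabla w)=\hat\Lambda\cdot\nabla D(w)+\tfrac12([\nabla\hat\Lambda]^\top\nabla w+\nabla\hat\Lambda[\nabla w]^\top)$ (product rule plus symmetrization) and $D(\partial_tw)=\partial_tD(w)$, one groups the $\partial_tD(w)+\hat\Lambda\cdot\nabla D(w)$ part as a material derivative of $|D(w)|^2$ along the divergence-free field $\hat\Lambda$. Since $\hat\Lambda$ coincides with the boundary velocity $\partial_th\,e_2$ on the top of $\mathcal F(t)$ and vanishes on the bottom, an application of the divergence theorem combined with Reynolds' transport formula gives
$$
\int_{\mathcal F(s)}\bigl(\partial_t|D(w)|^2+\hat\Lambda\cdot\nabla|D(w)|^2\bigr)=\tfrac{d}{ds}\int_{\mathcal F(s)}|D(w)|^2,
$$
the moving-boundary flux being cancelled exactly by the $\hat\Lambda\cdot\nabla$ contribution. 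Combining with the Korn-type identity \eqref{eq_Korn} (valid here thanks to the vertical motion of the interface and $\div w=0$), the contribution reduces to $-\tfrac{\mu}{2}\int_{\mathcal F(t)}|\nabla w|^2+\tfrac{\mu}{2}\int_{\mathcal F(0)}|\nabla w|^2$, and the residual bilinear terms from the decomposition are precisely the remainder displayed in \eqref{egalite_vitesse_fluid}. The main obstacle is this Reynolds-plus-Korn cancellation, which crucially uses the defining property $\hat\Lambda|_{\partial\mathcal F(t)}=u|_{\partial\mathcal F(t)}$ built into the construction of the lifting in Appendix \ref{App:reg}.
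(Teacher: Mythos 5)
Your formal computation coincides with the paper's: the same three observations about the multiplier $v=\partial_t w+\hat\Lambda\cdot\nabla w-w\cdot\nabla\hat\Lambda$ (that $\mathrm{div}\, v=0$, that $v=\partial_{tt}b\,e_2$ on $y=h(x,t)$ using $\partial_2\hat\Lambda=0$ and $w_1=0$ there, and that the pressure therefore drops out), the identification of the boundary term with $-\int_0^t\int_0^L\phi(w,q,h)\,\partial_{tt}b$, the transport of $|D(w)|^2$ along the divergence-free field $\hat\Lambda$ whose normal trace matches the boundary velocity, and the Korn-type identity reducing $\int|D(w)|^2$ to $\tfrac12\int|\nabla w|^2$; the beam identity is the same chain of periodic integrations by parts. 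So the computational core is correct and is the paper's.

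The gap is in the justification step, which is precisely what the lemma is for. At the stated regularity one only has $\partial_t w\in L^2_\sharp(\mathcal Q_T)$, so the trace of $\partial_t w$ (hence of $v$) on the moving interface $y=h(x,t)$ is not defined and the integration by parts you perform is not yet meaningful. Your one-line density argument does not repair this: the identity you want to pass to the limit in only holds for approximants that themselves satisfy the structural constraints $\mathrm{div}\, w_n=0$ and $w_n(x,h(x,t),t)=\partial_t b_n(x,t)\,e_2$ (otherwise extra pressure and boundary terms appear), and arbitrary smooth approximation of the pulled-back fields does not preserve them; moreover the change of variables of Proposition \ref{prop_cdv}, $\hat f(x,z)=f(x,h(x)z)$, does not preserve the divergence-free condition, so "approximating the pull-backs by smooth functions" is not compatible with your own computation. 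What is needed (and what the paper does) is a constraint-compatible approximation: use the cofactor (Piola-type) correspondence $w(x,y,s)=B_h^{-\top}\hat w(x,y/h(x,s),s)$, which is a homeomorphism onto $\mathcal U=H^1_\sharp(\Omega_1\times(0,T))\cap L^2(0,T;H^2_\sharp(\Omega_1))$, decompose $\hat w=\hat w_0+\tilde U[\,\cdot\,]$ with the lifting of Proposition \ref{prop_uetoile} carrying the interface datum, and approximate $b$ by eigenmodes of the beam operator and $\hat w_0$ by eigenmodes of the Stokes operator; for such approximants $\nabla\partial_t w_n\in L^2(\mathcal Q_t)$, so the computation is licit, and convergence in $\mathcal U$ gives convergence of $t\mapsto\int_{\mathcal F(t)}|\nabla w_n|^2$ in $C([0,T])$, which is also needed to pass to the limit in the instantaneous terms of \eqref{egalite_vitesse_fluid}. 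Without this construction your density claim does not get off the ground, even though every term of the final identity makes sense at the stated regularity.
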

 The proof of this lemma relies on regularization arguments and is postponed to Appendix \ref{App:reg}.
 
\medskip

Thus, we apply  \eqref{egalite_vitesse_fluid} for $w=u$ and $b=h$.  Then identity \eqref{eq_mult_fluid} becomes 
\begin{multline}
\label{eq_mult_fluid2}
\int_{\mathcal Q_t} \rho_f(\partial_t u + u \cdot \nabla u  ) \cdot (\partial_t u + \hat{\Lambda} \cdot \nabla u - u \cdot \nabla \hat{\Lambda} )
= - \int_0^t\int_{0}^{L} \phi(u,p,h) \partial_{tt} h\\
-\dfrac \mu2 \int_{\mathcal F(t)} |\nabla u|^{2} +\dfrac \mu2  \int_{\mathcal F(0)} |\nabla u_0|^{2} -2\mu\int_{\mathcal  Q_t} D(u) :  \left( [\nabla \hat{\Lambda}]^{\top} \nabla u + \nabla \hat{\Lambda} [ \nabla u ]^{\top} - D(u\cdot \nabla \hat{\Lambda}) \right),
\end{multline}
For the left-hand side of \eqref{eq_mult_fluid2}, denoted LHS, we have:
\begin{eqnarray*}
LHS &= &\int_{\mathcal Q_t} \rho_f|\partial_t u + u\cdot \nabla u |^2  - \int_{\mathcal Q_t }\rho_f (\partial_t u + u \cdot \nabla u ) \cdot  (u \cdot \nabla u ) \\
&&+ \int_{\mathcal Q_t} \rho_f\left( \partial_t u + u \cdot \nabla u \right) \cdot (\hat{\Lambda} \cdot \nabla u - u \cdot \nabla \hat{\Lambda})\,,
\\
&\geq&
\dfrac{1}{2}  \int_{\mathcal  Q_t} \rho_f |\partial_t u + u\cdot \nabla u |^2  - \frac{1}{2}  \int_{\mathcal Q_t} \rho_f |u\cdot \nabla u |^2 + \int_{\mathcal Q_t} \rho_f \left( \partial_t u + u \cdot \nabla u \right) \cdot (\hat{\Lambda} \cdot \nabla u - u \cdot \nabla \hat{\Lambda})\,,
\end{eqnarray*}
which yields
\begin{eqnarray} \label{eq_estDubase}
&&\dfrac \mu2  \int_{\mathcal F (t)}|\nabla u|^2+ \dfrac 12  \int_{\mathcal  Q_t} \rho_f|\partial_t u + u \cdot \nabla u|^2  + \int_0^t\int_{0}^{L} \phi(u,p,h) \partial_{tt} h 
\\
&& \leq \dfrac 12 \int_{\mathcal Q_t}\rho_f |u\cdot \nabla u|^2- \int_{\mathcal Q_t} \rho_f(\partial_t u + u \cdot \nabla u) (\hat{\Lambda} \nabla u - u \cdot \nabla \hat{\Lambda}) \notag \\ 
&& \quad
- 2\mu  \int_{\mathcal Q_t} D(u) :  \left(  [\nabla \hat{\Lambda}]^{\top} \nabla u + \nabla \hat{\Lambda} [ \nabla u ]^{\top} - D(u\cdot \nabla \hat{\Lambda})\right) +\dfrac \mu2 \int_{\mathcal F(0)} |\nabla u_0|^{2}\,. \notag
\end{eqnarray}
We split the right-hand side of this inequality into six integrals denoted $I_1,\ldots,I_6$ that we bound independently.
Applying interpolation inequalities for estimating the $L^4$-norm, we have
\begin{eqnarray*}
I_1  &:=&  \int_{\mathcal Q_t} \rho_f | u \cdot \nabla u|^2  \\[4pt]
	&\leq &  C_0(t)\int_0^t \|u ; L_{\sharp}^4(\mathcal F(s))\|^2 \|\nabla u ; L_{\sharp}^4(\mathcal F(s))\|^2 {\mathrm d}s\\[6pt]
	&  \leq &  C_{0}(t) \,\int_0^t \| u ; L_{\sharp}^2(\mathcal F(s))\| \|\nabla u ; L_{\sharp}^2(\mathcal F(s))\|^2 \| u ; H^2(\mathcal F(s))\|{\mathrm d}s.
\end{eqnarray*}
Next we use the elliptic estimates derived in section \ref{subsection:elliptique} to bound $\| u ; H^2(\mathcal F(s))\|$. 
\begin{equation}
\label{inegalite:ellip}
\| u ; H^2_{\sharp}(\mathcal F(s))\| \leq K^s(R_t) ( \| \partial_t u + u \cdot \nabla u ; L^2_{\sharp}(\mathcal F(s))\| + \|\partial_{t} h ; H^{\frac 32}_{\sharp}(0,L)\|) .
\end{equation}
Following Remark \ref{rem_constantes}, the  function of time $t \mapsto K^s(R_t)$ is (non-decreasing) and bounded on all bounded subintervals of $[0,T^*).$  Thus, there holds: 
$$
I_1  \leq   C_{0}(t) \,\int_0^t \| u ; L_{\sharp}^2(\mathcal F(s))\| \|\nabla u ; L_{\sharp}^2(\mathcal F(s))\|^2\left( \| \partial_t u + u \cdot \nabla u ; L^2_{\sharp}(\mathcal F(s))\| + \|\partial_{t} h ; H^{\frac 32}_{\sharp}(0,L)\|\right) {\mathrm d}s.
$$
Recalling the energy estimate proven in Proposition \ref{prop_estkin}  we obtain,  for arbitrary $\varepsilon >0,$ 
\begin{multline} \label{eq_estI1}
I_1 \leq  C_0(t) \int_0^t \|\nabla u ; L^2_{\sharp}(\mathcal F(s))\|^4 {\mathrm d}s \\ + \varepsilon \int_0^t\left(\| \partial_t u + u \cdot \nabla u ; L^2_{\sharp}(\mathcal F(s))\|^2+ \|\partial_{txx} h ; L^2_{\sharp}(0,L)\|^2\right){\mathrm d}s\,.
\end{multline}
Here and in what follows, we skip the dependance of $C_0(t)$ on $\varepsilon$ as this parameter will be fixed universally later on.
The second term reads:
$$
I_2 := \int_{\mathcal Q_t} \rho_f(\partial_t u + u \cdot \nabla u)  \cdot (\hat{\Lambda} \cdot \nabla u) \,,
$$
that we estimate as follows
\begin{eqnarray*}
|I_2| &\leq& \int_0^t\| \partial _t u  + u \cdot \nabla u ; L^2_{\sharp}(\mathcal F(s))\|  \|\hat{\Lambda} ; L^4_{\sharp}(\mathcal F(s))\| \|\nabla u ; L^4_{\sharp}(\mathcal F(s))\|{\mathrm d}s\, \\
 &\leq&  C_0(t) \int_0^t\| \partial _t u  + u \cdot \nabla u  ; L^2_{\sharp}(\mathcal F(s))\| \|\nabla u ; L^2_{\sharp}(\mathcal F(s))\|^{\frac 12} \ldots  \\ 
 				&& \qquad \ldots \|u ; H^2_{\sharp}(\mathcal F(s))\|^{\frac 12} \|\hat{\Lambda} ; L^2_{\sharp}(\mathcal F(s))\|^{\frac 12} \|\hat{\Lambda} ; H^1_{\sharp}(\mathcal F(s))\|^{\frac 12}{\mathrm d}s \, .
\end{eqnarray*}
Thanks to estimates \eqref{eq_hatLambda0},  \eqref{eq_hatLambda1} satisfied by $\hat \Lambda$ and using Proposition \ref{prop_estkin}  which enables us to bound $\| u ;L^2_{\sharp}(\mathcal F(s))\|$ and $\|\partial_t h ; L^2_{\sharp}(0,L)\|$, and the elliptic estimate \eqref{inegalite:ellip}, we get, for arbitrary $\varepsilon >0$
to be fixed later on:
\begin{multline} \label{eq_estI2}
|I_2|  \leq C_0(t)  \int_0^t\left(  \|\nabla u ; L^2_{\sharp}(\mathcal F(s))\|^4 + \|\partial_{tx} h ; L^2_{\sharp}(0,L)\|^4 \right) {\mathrm d}s\\
  + \varepsilon \int_0^t\left( \| \partial _t u  + u \cdot \nabla u  ; L^2_{\sharp}(\mathcal F(s))\|^2 +  \| \partial_{txx} h; L^2_{\sharp}(0, L)\|^2  \right) {\mathrm d}s\,.
\end{multline}
The third term  reads:
$$
I_3 := \int_{\mathcal Q_t}\rho_f (\partial_t u + u \cdot \nabla u) \cdot (u \cdot \nabla \hat{\Lambda})\,.
$$
We estimate it as follows
\begin{eqnarray*}
|I_3| & \leq & \int_0^t \|\partial_t u + u \cdot \nabla u ; L^2_{\sharp}(\mathcal F(s))\| \| u ; L^4_{\sharp}(\mathcal F(s))\|
\|\nabla \hat{\Lambda} ; L^4_{\sharp}(\mathcal F(s))\|{\mathrm d}s\, \\
& \leq&  C_{0}(t)  \int_0^t\|\partial_t u + u \cdot \nabla u ; L^2_{\sharp}(\mathcal F(s))\| \| u ;L^2_{\sharp}(\mathcal{F}(s))\|^{\frac 12} \ldots \\ 
&& \qquad \ldots \|\nabla u ;L^2_{\sharp}(\mathcal F(s))\|^{\frac 12} \|\hat{\Lambda} ; H^1_{\sharp}(\mathcal F(s))\|^{\frac 12}
\|\hat{\Lambda} ; H^2_{\sharp}(\mathcal F(s))\|^{\frac 12}{\mathrm d}s\,,
\end{eqnarray*}
Applying estimates \eqref{eq_hatLambda1}-\eqref{eq_hatLambda2} satisfied by $\hat \Lambda$,  together with interpolation  inequalities and Proposition \ref{prop_estkin}, we obtain
$$
|I_3|  \leq C_{0}(t)  \int_0^t\|\partial_t u + u \cdot \nabla u ; L^2_{\sharp}(\mathcal F(s))\| 
 \|\nabla u ;L^2_{\sharp}(\mathcal F(s))\|^{\frac 12} \|\partial_{tx} h;L^2_{\sharp}(0,L)\|^{\frac 12} 
  \|\partial_{txx} h ; L^2_{\sharp}(0, L)\|^{\frac 12}{\mathrm d}s.
 \,.
$$
Hence, for arbitrary $\varepsilon >0,$ there holds:
\begin{multline}  \label{eq_estI3}
|I_3| \leq C_0(t) \int_0^t\left(  \|\nabla u ; L^2_{\sharp}(\mathcal F(s))\|^4 + \|\partial_{tx} h ; L^2_{\sharp}(0,L)\|^4\right) {\mathrm d}s  \\ + \varepsilon\int_0^t \left( \| \partial _t u  + u \cdot \nabla u  ; L^2_{\sharp}(\mathcal F(s))\|^2 + \| \partial_{txx} h ; L^2_{\sharp}(0,L)\|^2  \right){\mathrm d}s   \,.
\end{multline}
We proceed with 
$$
I_4 := 2\mu\int_{\mathcal Q_t} D(u) : ([\nabla \hat{\Lambda}]^{\top} \nabla u + \nabla \hat{\Lambda} [ \nabla u ]^{\top}),
$$
that we bound as follows ($C$ is a constant depending only on $\mu$):
\begin{eqnarray*}
|I_4|  &\leq&  C \int_0^t\|\nabla u ; L^4_{\sharp}(\mathcal F(s))\|^2 \|\nabla \hat{\Lambda} ; L^2_{\sharp}(\mathcal F(s))\|{\mathrm d}s  \\
		&\leq&  C_{0}(t) \int_0^t\|\nabla u ;L^2_{\sharp}(\mathcal F(s))\| \| u ; H^2_{\sharp}(\mathcal F(s))\| \|\hat{\Lambda} ; H^1_{\sharp}(\mathcal F(s))\|{\mathrm d}s \,.
\end{eqnarray*}
Similarly as above, using \eqref{eq_hatLambda1} and \eqref{inegalite:ellip}, we obtain, for arbitrary $\varepsilon >0$:
\begin{multline}
|I_4| \leq \displaystyle C_0(t) \int_0^t\left( \|\partial_{tx} h ; L^2_{\sharp}(0,L)\|^4 + \|\nabla u ; L^2(\mathcal F(s))\|^4 \right){\mathrm d}s \\
\displaystyle+ \varepsilon \int_0^t\left( \| \partial _t u  + u \cdot \nabla u  ; L^2_{\sharp}(\mathcal F(s))\|^2 + \| \partial_{txx} h ; L^2_{\sharp}(0,L)\|^2  \right){\mathrm d}s \,.\label{eq_estI4}
\end{multline} 
Finally, the fifth term is defined by
$$
I_5 := 2\mu\int_{\mathcal Q_t} D(u) : D(u \cdot \nabla \hat{\Lambda})\,.
$$
Expanding $D(u \cdot \nabla \hat{\Lambda})$, we obtain
$$
|I_5|  \leq C  \left( \int_{\mathcal Q_t} |\nabla u|^2 |\nabla \hat{\Lambda}| + \int_{\mathcal Q_t} |\nabla u| |u ||\nabla^2 \hat{\Lambda}|\right)\,.
$$
The first term on the right-hand side is bounded as $I_4$ (see \eqref{eq_estI4}). As for the second term of the right-hand side, we have
\begin{multline*}
 \int_{\mathcal Q_t} |\nabla u| |u| |\nabla^2 \hat{\Lambda}|
  \leq C_{0}(t) \displaystyle\int_0^t \|\hat{\Lambda} ; H^2_{\sharp}(\mathcal F(s))\| \|u ; L^2_{\sharp}(\mathcal F(s))\|^{\frac 12} \|\nabla u ;L^2_{\sharp}(\mathcal F(s))\| \|u ; H^2_{\sharp}(\mathcal F(s))\|^{\frac 12}{\mathrm d}s\, \, \\[8pt]
  \leq  C_{0}(t) \displaystyle\int_0^t   \|\partial_{txx} h ; L^2_{\sharp}(0,L)\|      \|\nabla u ;L^2_{\sharp}(\mathcal F(s))\| \|u ; H^2_{\sharp}(\mathcal F(s))\|^{\frac 12}\,.
\end{multline*}
Consequently,  we obtain:
\begin{multline*}
 \int_{\mathcal Q_t} |\nabla u| |u| |\nabla^2 \hat{\Lambda}|
 \leq C_0(t)\int_0^t \|\nabla u ; L^2_{\sharp}(\mathcal F(t))\|^4  \\+\varepsilon \int_0^t \left( \| \partial _t u  + u \cdot \nabla u  ; L^2_{\sharp}(\mathcal F(s))\|^2 + \|\partial_{txx} h ; L^2_{\sharp}(0,L)\|^2\right)\,.
\end{multline*}
Finally, for arbitrary small $\varepsilon >0,$ we have:
\begin{multline} \label{eq_estI5}
|I_5| \leq C_0(t) \int_0^t \left( \|\partial_{tx} h ; L^2_{\sharp}(0,L)\|^4 + \|\nabla u ; L^2_{\sharp}(\mathcal F(s))\|^4  \right){\mathrm d}s \\
+\varepsilon \int_0^t  \left( \| \partial _t u  + u \cdot \nabla u  ; L^2_{\sharp}(\mathcal F(s))\|^2 + \|\partial_{txx} h ; L^2_{\sharp}(0,L)\|^2\right){\mathrm d}s\,.
\end{multline}
Introducing \eqref{eq_estI1}-\eqref{eq_estI2}-\eqref{eq_estI3}-\eqref{eq_estI4}-\eqref{eq_estI5} into
\eqref{eq_estDubase}, we obtain that, for arbitrary $\varepsilon >0,$ 
there holds:
\begin{multline} \label{eq_estDu2}
\dfrac \mu2   \int_{\mathcal F (t)} |\nabla u|^2+ \dfrac 14 \int_{\mathcal Q_t}\rho_f |\partial_t u + u \cdot \nabla u|^2 + \int_0^t\int_{0}^{L} \phi(u,p,h) \partial_{tt} h 
 \\
\leq C_0(t) \int_0^t\left( \|\nabla u ; L^2_{\sharp}(\mathcal F(s))\|^4 +  \|\partial_{tx} h ; L^2_{\sharp}(0,L)\|^4  \right)
\\
+\varepsilon  \int_0^t\left( \| \partial _t u  + u \cdot \nabla u  ; L^2_{\sharp}(\mathcal F(s))\|^2 + \|\partial_{txx} h ; L^2_{\sharp}(0,L)\|^2\right)+\dfrac \mu2   \int_{\mathcal F (0)} |\nabla u_0|^2\,.
\end{multline} 

We now take care of the elastic part: we apply  \eqref{egalite_vitesse_beam} of Lemma \ref{lem:vitesse} for $b=h$ and rewrite \eqref{eq_mult_beam} as 
\begin{multline} \label{eq_lastidentity}
\int_0^t\int_{0}^{L} \rho_s|\partial_{tt} h|^2 + \int_{0}^{L} \left(\dfrac{\gamma}{2} |\partial_{tx} h(x, t)|^2 - \beta \partial_{t} h(x,t) \partial_{xx} h(x,t)  - \alpha \partial_{tx} h(x,t) \partial_{xxx} h (x,t)\right) \mathrm{d}x
\\
- \beta \int_0^t\int_{0}^{L} |\partial_{tx} h |^2 - \alpha\int_0^t \int_{0}^{L} |\partial_{txx} h |^2 =\int_0^t\int_{0}^{L} \phi(u,p,h) \partial_{tt} h + C_0
\end{multline}
We add \eqref{eq_estDu2} and \eqref{eq_lastidentity}, restricted to $\varepsilon < \min (\rho_f / 8,1)$ and bound all possible terms by energy estimate (Proposition \ref{prop_estkin}). This yields: 
\begin{multline} \label{eq_estDu22}
\dfrac \mu2   \int_{\mathcal F (t)} |\nabla u|^2+ \dfrac 18  \int_{\mathcal Q_t}\rho_f |\partial_t u + u \cdot \nabla u|^2 +\int_0^t\int_{0}^{L} \rho_s|\partial_{tt} h|^2  + Rem(t) \\  \leq C_0(t)\int_0^t\left( \|\nabla u ; L^2_{\sharp}(\mathcal F(s))\|^4 +  \|\partial_{tx} h ; L^2_{\sharp}(0,L)\|^4  \right)
+ (\alpha + 1 ) \int_0^t\int_0^L |\partial_{txx} h|^2 + C_0.
\end{multline} 
where, thanks to Proposition \ref{prop_estkin}:
$$
Rem(t) = \int_{0}^{L} \left(\dfrac{\gamma}{2} |\partial_{tx} h(x, t)|^2  
- \beta \partial_{t} h(x,t) \partial_{xx} h(x,t)  - \alpha \partial_{tx} h(x,t) \partial_{xxx} h (x,t)\right) \mathrm{d}x\,, \quad \forall \, t \in [0,T].
$$
satisfies:
$$
Rem(t) \geq  - C_0 - \dfrac{\alpha}{2} \left(  \|\partial_{xxx} h(\cdot,t) ; L^2(0,L)\|^2 + \|\partial_{tx} h(\cdot,t) ; L^2(0,L)\|^2\right)\,. 
$$
Finally, we get:
\begin{multline} \label{eq_estDu3}
\dfrac \mu2   \int_{\mathcal F (t)} |\nabla u|^2  - \dfrac{\alpha}{2} \int_0^L \left(|\partial_{xxx} h(x,t)|^2 + |\partial_{tx} h(x,t)|^2\right){\rm d}x \\
+ \dfrac 18  \int_{\mathcal Q_t}\rho_f |\partial_t u + u \cdot \nabla u|^2 +\int_0^t\int_{0}^{L} \rho_s|\partial_{tt} h|^2\\  \leq C_0(t)\int_0^t\left( \|\nabla u ; L^2_{\sharp}(\mathcal F(s))\|^4 +  \|\partial_{tx} h ; L^2_{\sharp}(0,L)\|^4  \right)
+ (\alpha + 1 ) \int_0^t\int_0^L |\partial_{txx} h|^2 + C_0.
\end{multline} 

To go further, we thus need to obtain an estimate for 
$$
\int_0^t\int_0^L |\partial_{txx} h|^2 \,, \qquad \int_0^L \left(|\partial_{xxx} h(x,t)|^2 + |\partial_{tx} h(x,t)|^2\right){\rm d}x
$$ 
 The next step is thus an independant regularity estimate for the beam equation.

\medskip

\paragraph{\em Step 2: Regularity estimate for structure equation}
First, $- \partial_{txx} h \in L^2_{\sharp}((0,L) \times (0,T))$ so  it is a suitable multiplier for \eqref{eq_elasticite}.
We obtain after integration:
\begin{multline*}
-\int_0^t \int_{0}^{L} \rho_s\partial_{tt} h \partial_{txx} h + \beta \int_0^t \int_{0}^{L} \partial_{xx} h \partial_{txx} h - \alpha \int_0^t \int_{0}^{L} \partial_{xxxx} h \partial_{txx} h
+ \gamma \int_0^t \int_{0}^{L} |\partial_{txx} h|^2\\
= - \int_0^t \int_{0}^{L} \phi(u,p,h) \partial_{txx} h\,.
\end{multline*}
We can further integrate by parts
\begin{eqnarray} \label{eq_estdtxhv1}
\dfrac{1}{2} \int_{0}^{L} \left[\rho_s|\partial_{tx} h(x,t)|^2 + \alpha |\partial_{xxx} h(x,t)|^2 + \beta |\partial_{xx} h(x,t)|^2 \right] {\mathrm d}x + \gamma\int_0^t \int_{0}^{L} |\partial_{txx} h|^2 =  \\ 
 \nonumber
-\int_0^t \int_{0}^{L} \phi(u,p,h) \partial_{txx} h{\rm d}x+ \dfrac{1}{2} \int_{0}^{L} \left[\rho_s|\partial_{x} \dot h_0(x)|^2 + \alpha |\partial_{xxx} h_0(x)|^2 + \beta |\partial_{xx} h_0(x)|^2 \right] {\mathrm d}x.
\end{eqnarray}
Note that all terms make sense due to the regularity of the strong solution $h$.
We control the first term of the right-hand side with the help of {Corollary \ref{cor_phi}}, recalling that constant $K^b(R_t)$ defines a function of time that remains bounded on bounded subintervals of $[0,T^*)$. Consequently, we obtain:
\begin{eqnarray}\notag
\left|  \int_{0}^{L} \phi(u,p,h) \partial_{txx} h \right| &\leq& \|\phi(u,p,h) ; H^{\frac 12}_{\sharp}(0,L)\| \|\partial_{txx} h ; H^{-\frac 12}_{\sharp}(0,L)\|\, \\\notag
&\leq& C_{0}(t) \left( \|\mu {\Delta} u - \nabla p ; L^2_{\sharp}(\mathcal F(t))\|  +\|\partial_t h ; H^{\frac 32}_{\sharp}(0,L)\|  \right) \|\partial_{t} h ; H^{\frac 32}_{\sharp}(0,L) \|\, .
\end{eqnarray}
Since $\mu {\Delta} u - \nabla p= \partial_{t} u + u \cdot \nabla u$ we have then
\begin{eqnarray}\notag
\left|  \int_{0}^{L} \phi(u,p,h) \partial_{txx} h \right| \leq C_{0}(t) \left( \|\partial_t u+u\cdot \nabla u ; L^2_{\sharp}(\mathcal F(t))\|  +\|\partial_t h ; H^{\frac 32}_{\sharp}(0,L)\|  \right) \|\partial_{t} h ; H^{\frac 32}_{\sharp}(0,L) \|\, .
\end{eqnarray}
Moreover, we apply the following interpolation inequality 
\begin{eqnarray*}
\|\partial_{t} h ; H^{\frac 32}_{\sharp}(0,L)\| &\leq&  C \|\partial_{tx} h ; L^2_{\sharp}(0,L)\|^{\frac 12} \|\partial_{txx} h ; L^2_{\sharp}(0,L)\|^{\frac 12}\,.
\end{eqnarray*}
this yields:
\begin{multline} \label{eq_estphih}
\left|  \int_{0}^{L} \phi(u,p,h) \partial_{txx} h \right| \\
\leq
C_0(t)  \|\partial_{tx} h ; L^2_{\sharp}(0,L)\|^2  + \varepsilon \left(  \|\partial_t u+u\cdot \nabla u ; L^2_{\sharp}(\mathcal F(t))\|^2 + \|\partial_{txx} h ; L^2_{\sharp}(0,L)\|^2 \right)\,.  
\end{multline}
Introducing \eqref{eq_estphih} into \eqref{eq_estdtxhv1}, we obtain that, for arbitrary $\varepsilon>0$
sufficiently small, there holds:
\begin{multline} \label{eq_estdtxh}
\dfrac{1}{2} \int_{0}^{L} \left[ \rho_s|\partial_{tx} h(x,t)|^2 + \alpha|\partial_{xxx} h(x,t)|^2 \right] {\mathrm d}x+ \dfrac{\gamma}{2} \int_0^t\int_{0}^{L} |\partial_{txx} h|^2  \\
\leq 
C_0(t) \int_0^t\|\partial_{tx} h ; L^2_{\sharp}(0,L)\|^2  + \varepsilon \int_0^t \|\partial_t u+u\cdot \nabla u ; L^2_{\sharp}(\mathcal F(s))\|^2 + C_0
 \end{multline}

\paragraph{\em Step 3 : Conclusion}
We multiply \eqref{eq_estdtxh} by a sufficently large constant, typically:
$$
\max\left(\dfrac \alpha\rho_s, 4, \dfrac{4(\alpha+1)}{\gamma}\right)
$$ 
and add \eqref{eq_estDu3}, we obtain, choosing $\varepsilon$ sufficiently small:
\begin{multline} \label{eq_Dufinal}
\dfrac \mu2 \int_{\mathcal F (t)} |\nabla u|^2 
 + 
   \int_{\mathcal Q_t} |\partial_t u + u \cdot \nabla u|^2\\ +\delta\left(\dfrac{1}{2} \int_{0}^{L} \left[ \rho_s|\partial_{tx} h(x,t)|^2 + \alpha|\partial_{xxx} h(x,t)|^2 \right] {\mathrm d}x+ \dfrac{\gamma}{2} \int_0^t\int_{0}^{L} |\partial_{txx} h|^2\right)
 \\\leq
C_{0}(t)
\int_0^t\left( 1 +  \|\nabla u ; L^2(\mathcal F(s))\|^2 +  \|\partial_{tx} h ; L^2_{\sharp}(0,L)\|^2  \right) ^2  +C_0  \,,
\end{multline} 
where $\delta >0$. Setting 
$$
\mathcal E_{reg} := \dfrac \mu2 \int_{\mathcal F (t)} |\nabla u|^2 
 +\delta\left(\dfrac{1}{2} \int_{0}^{L} \left[ \rho_s|\partial_{tx} h(x,t)|^2 + \alpha|\partial_{xxx} h(x,t)|^2 \right] {\mathrm d}x\right).
$$
we have then for all $t \in [0,T]$:
$$
\|u(\cdot,t) ; H^1(\mathcal F(t))\|^2  + \|\partial_t h(\cdot,t) ; H^1(0,L)\|^2 + \|h ; H^3(0,L)\| \leq C_0 + C \mathcal E_{reg}(t).
$$ 
with a constant $C$ depending only on $\mu,\rho_s,\alpha$ and $\delta,$ and
$$
 \mathcal E_{reg}(t) \leq C_{0}(t)\int_0^t\left( 1 +  \|\nabla u ; L^2(\mathcal F(s))\|^2 +  \|\partial_{tx} h ; L^2_{\sharp}(0,L)\|^2  \right)  \left(\mathcal E_{reg}(s) +1\right){\mathrm d}s +C_0
$$
Proposition \ref{prop_estkin} implies then that:
$$
 \int_{0}^t \left( 1 +  \|\nabla u ; L^2(\mathcal F(t))\|^2 +  \|\partial_{tx} h ; L^2_{\sharp}(0,L)\|^2  \right)  \leq C_{0}(t)
$$
We thus complete the proof  of Proposition \ref{prop_estreg} by applying a Gronwall lemma and remembering that $t\mapsto C_0(t)$ is bounded on any bounded interval.

\appendix 

\section{Technical details for  the no-contact result} 
In this appendix we collect  technical lemmas that are used throughout Section \ref{sec_glob}.

\subsection{Estimating positivity of scalar functions} \label{app_inequalities}
We prove first functional inequalities which enable to bound from
below a positive function. 
\begin{proposition} \label{prop_below1}
Let $\alpha \in (0,1/2) \setminus \{1/4\}.$ 
Given a non-negative function $\eta \in H^2_{\sharp}(0,L),$  there holds:
$$
\int_{0}^{L} \dfrac{|\partial_x \eta(x)|^4}{|\eta(x)|^{4\alpha}} {\rm d}x  \leq C \|\partial_{xx} \eta ; L^2_{\sharp}(0,L)\|^2 \|\eta ; L^{\infty}_{\sharp}(0,L)\|^{2(1-2\alpha)}\,.
$$ 
\end{proposition}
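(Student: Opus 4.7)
My plan is to reduce the estimate to an integration-by-parts identity made possible by the condition $\alpha \neq 1/4$, and then to absorb the resulting integral back into the left-hand side. Concretely, denote
\[
I := \int_{0}^{L} \dfrac{|\partial_x \eta(x)|^4}{|\eta(x)|^{4\alpha}}\,{\rm d}x.
\]
Since $\alpha \neq 1/4$, I would use the algebraic identity
\[
\dfrac{\partial_x \eta}{\eta^{4\alpha}} = \dfrac{1}{1-4\alpha}\,\partial_x\bigl(\eta^{1-4\alpha}\bigr),
\]
to rewrite $I$ as $\frac{1}{1-4\alpha}\int_0^L (\partial_x\eta)^3\,\partial_x(\eta^{1-4\alpha})\,{\rm d}x$. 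Integrating by parts (the boundary terms vanish by $L$-periodicity) yields
\[
I = -\dfrac{3}{1-4\alpha}\int_{0}^{L} (\partial_x\eta)^2\,\partial_{xx}\eta\,\eta^{1-4\alpha}\,{\rm d}x.
\]

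Next I would apply Cauchy--Schwarz to split off $\|\partial_{xx}\eta\,;\,L^2_\sharp(0,L)\|$, obtaining
\[
I \leq \dfrac{3}{|1-4\alpha|}\,\|\partial_{xx}\eta\,;\,L^2_\sharp(0,L)\|\,\left(\int_0^L (\partial_x\eta)^4\,\eta^{2(1-4\alpha)}\,{\rm d}x\right)^{1/2}.
\]
The key observation is that the remaining integrand can be written as $\frac{(\partial_x\eta)^4}{\eta^{4\alpha}}\cdot\eta^{2-4\alpha}$; since $\alpha<1/2$ guarantees $2-4\alpha>0$, I factor out $\|\eta\,;\,L^\infty_\sharp(0,L)\|^{2-4\alpha}$ and recognize $I$ again. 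This yields
\[
I \leq \dfrac{3}{|1-4\alpha|}\,\|\partial_{xx}\eta\,;\,L^2_\sharp(0,L)\|\,\|\eta\,;\,L^\infty_\sharp(0,L)\|^{1-2\alpha}\,I^{1/2},
\]
and dividing through by $I^{1/2}$ gives the claimed bound with $C = 9/(1-4\alpha)^2$.

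The only subtle point is that $\eta$ need only be a non-negative $H^2_\sharp$ function, so $\eta$ may vanish and the expression $\eta^{-4\alpha}$ is singular there. To justify the integration by parts and the division by $I^{1/2}$, the plan is to carry out the computation with $\eta$ replaced by $\eta+\varepsilon$ for $\varepsilon>0$ (the right-hand side passes through by monotone convergence since $(\eta+\varepsilon)^{2(1-2\alpha)}\to\eta^{2(1-2\alpha)}$ and $(\eta+\varepsilon)^{-4\alpha}\uparrow\eta^{-4\alpha}$ on the set where $\eta>0$), and then to invoke Fatou on the left to pass $\varepsilon\to 0^+$. This regularization is the only technical step; the rest is a one-line integration by parts made possible by the hypothesis $\alpha\neq 1/4$, while $\alpha<1/2$ is what allows the power of $\eta$ left after Cauchy--Schwarz to be controlled by $\|\eta\|_{L^\infty}$.
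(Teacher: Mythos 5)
Your proof is correct and follows essentially the same route as the paper: the paper packages the single integration by parts through the substitution $b=\eta^{1-\alpha}$ (the factor $1-4\alpha$ emerging after expanding the identity $\int_0^L|\partial_x b|^4+3\int_0^L b\,\partial_{xx}b\,|\partial_x b|^2=0$), whereas you integrate by parts directly against $\partial_x\bigl(\eta^{1-4\alpha}\bigr)$, and both conclude with Cauchy--Schwarz, factoring out $\|\eta;L^\infty_\sharp(0,L)\|$, and absorbing $I^{1/2}$. Your $\eta+\varepsilon$ regularization handles the degeneracy at zeros of $\eta$ at least as carefully as the paper's ``assume $\eta$ smooth'' reduction, so there is no gap.
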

\begin{proof}
Without restriction, we assume that $\eta$ is smooth and we set 
$$ 
b(x) = [\eta(x)]^{1-\alpha} \quad \forall \, x \in (0,L)\,.
$$
A straightforward integration by parts together with the L-periodicity, yields 
$$
\int_{0}^{L} |\partial_x b(x)|^4 {\rm d}x  + 3 \int_{0}^{L} b(x) \partial_{xx} b(x) |\partial_x b (x)|^2 {\rm d}x= 0\,.
$$
Replacing $b$ by its value, we obtain:
$$
(1-\alpha)^3 (  1- 4 \alpha) \int_{0}^{L} \dfrac{|\partial_x \eta(x)|^4}{|\eta(x)|^{4\alpha}}{\rm d}x + 3 (1-\alpha)^3 \int_{0}^{L} \left[ |\eta(x)|^{(1- 2\alpha) } \partial_{xx} \eta(x)\right] \dfrac{|\partial_x \eta(x)|^2}{|\eta(x)|^{2\alpha}} {\rm d}x = 0\,.
$$
When $\alpha \neq 1/4$ this yields 
$$
 \int_{0}^{L} \dfrac{|\partial_x \eta(x)|^4}{|\eta(x)|^{4\alpha}}{\rm d}x   \leq C \int_{0}^{L} \left[ |\eta(x)|^{(1- 2\alpha) } \partial_{xx} \eta(x)\right] \dfrac{|\partial_x \eta(x)|^2}{|\eta(x)|^{2\alpha}} {\rm d}x.
$$
We conclude by applying Cauchy-Schwarz inequality, which yields, since $\alpha < 1/2$
\begin{eqnarray*}
 \int_{0}^{L} \dfrac{|\partial_x \eta(x)|^4}{|\eta(x)|^{4\alpha}}{\rm d}x  &\leq& C \int_{0}^{L}  |\eta(x)|^{2(1- 2\alpha) } |\partial_{xx} \eta(x)|^2{\rm d}x\\
 													& \leq  &C \|\eta ; L^{\infty}_{\sharp}(0, L)\|^{2(1-2\alpha)} \|\partial_{xx} \eta ; L^2_{\sharp}(0,L)\|^2\,.
\end{eqnarray*}

\end{proof}

\begin{proposition} \label{prop_below2}
Given a non-negative function $\eta \in H^3_{\sharp}(0,L),$  the following pointwise estimates hold true:
\begin{eqnarray}
\label{esteta1}|\partial_x \eta(x)|^2  \leq C  \|\partial_{xx}\eta ; L^2_{\sharp}(0,L)\|^{\frac{1}{2}} \|\partial_{xxx} \eta ; L^2_{\sharp}(0,L)\|^{\frac{1}{2}}   \, |\eta(x)|\,, && \forall \, x \in (0,L)\,.\\
\label{esteta2}|\partial_x \eta(x)|^2  \leq C \|\partial_{xxx} \eta ; L^2_{\sharp}(0,L)\| \, |\eta(x)|\,, && \forall \, x \in (0,L)\,.
\end{eqnarray}
\end{proposition}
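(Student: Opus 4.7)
The plan is to combine two ingredients: a pointwise inequality for non-negative functions relating the first derivative to the function and the $L^\infty$-norm of the second derivative, and a one-dimensional interpolation estimate exploiting the fact that $\partial_{xx}\eta$ has zero mean by periodicity.

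First I would establish the following pointwise bound: for any non-negative $f\in C^2_\sharp(0,L)$ and any $x\in(0,L)$,
\begin{equation}\label{eq_ptwise}
|\partial_x f(x)|^2 \leq 2\, \|\partial_{xx} f ; L^\infty_\sharp(0,L)\|\, f(x).
\end{equation}
This is a standard fact: Taylor's formula with integral remainder gives, for any $h\in\mathbb{R}$,
$$
0 \leq f(x+h) \leq f(x) + h\, \partial_x f(x) + \tfrac{h^2}{2}\|\partial_{xx} f ; L^\infty_\sharp(0,L)\|,
$$
and minimizing the right-hand side in $h$ yields \eqref{eq_ptwise}. By density, the inequality extends to any non-negative $f\in H^3_\sharp(0,L)$.

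Next I would control $\|\partial_{xx}\eta ; L^\infty_\sharp(0,L)\|$. Since $\eta$ is $L$-periodic, $\partial_{xx}\eta$ has zero mean on $(0,L)$, so there exists $x_0\in [0,L]$ with $\partial_{xx}\eta(x_0)=0$. For any $x\in(0,L)$, writing
$$
|\partial_{xx}\eta(x)|^2 = 2\int_{x_0}^{x} \partial_{xx}\eta(s)\, \partial_{xxx}\eta(s)\,{\rm d}s
$$
and applying Cauchy--Schwarz gives the Agmon-type inequality
\begin{equation}\label{eq_agmon}
\|\partial_{xx}\eta ; L^\infty_\sharp(0,L)\| \leq \sqrt{2}\, \|\partial_{xx}\eta ; L^2_\sharp(0,L)\|^{1/2}\, \|\partial_{xxx}\eta ; L^2_\sharp(0,L)\|^{1/2}.
\end{equation}
Plugging \eqref{eq_agmon} into \eqref{eq_ptwise} applied to $f=\eta$ yields \eqref{esteta1}.

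Finally, \eqref{esteta2} follows from \eqref{esteta1} by noting that, since $\partial_{xx}\eta$ has zero mean, the Poincar\'e--Wirtinger inequality in the periodic setting gives
$$
\|\partial_{xx}\eta ; L^2_\sharp(0,L)\| \leq C\, \|\partial_{xxx}\eta ; L^2_\sharp(0,L)\|,
$$
so that $\|\partial_{xx}\eta ; L^2_\sharp(0,L)\|^{1/2}\|\partial_{xxx}\eta ; L^2_\sharp(0,L)\|^{1/2} \leq C \|\partial_{xxx}\eta ; L^2_\sharp(0,L)\|$. Alternatively, one can argue directly by integrating $|\partial_{xxx}\eta|$ from the vanishing point $x_0$ and using Cauchy--Schwarz. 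The only mild subtlety is the use of $L$-periodicity to guarantee that $\partial_{xx}\eta$ has zero mean, which is essential for both \eqref{eq_agmon} and the Poincar\'e step; everything else is a straightforward application of Taylor's formula and one-dimensional interpolation.
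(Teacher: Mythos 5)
Your proof is correct, and the skeleton is the same as the paper's (first a pointwise bound $|\partial_x\eta(x)|^2\leq C\,\|\partial_{xx}\eta;L^\infty_\sharp(0,L)\|\,\eta(x)$, then control of $\|\partial_{xx}\eta;L^\infty_\sharp\|$ using that $\partial_{xx}\eta$ has zero mean), but you reach the key pointwise step by a genuinely different argument. The paper substitutes $z=\sqrt{\eta}$ and uses an extremum argument: at a point where $\partial_x z$ is extremal one has $\partial_{xx}z=0$, whence $|\partial_x z|^2\leq \||\partial_x z|^2+z\partial_{xx}z;L^\infty_\sharp\|=\tfrac12\|\partial_{xx}\eta;L^\infty_\sharp\|$, and then multiplies back by $\eta$. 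You instead prove the classical Glaeser--Landau inequality $|\partial_x f(x)|^2\leq 2\|\partial_{xx}f;L^\infty\|\,f(x)$ directly by Taylor expansion and nonnegativity of the discriminant, applied to $f=\eta$ itself. Your route is more elementary and has a real advantage: it never manipulates $\sqrt{\eta}$, whereas the paper's claim that $\sqrt{\eta}\in H^3_\sharp(0,L)$ is delicate precisely at the zeros of $\eta$ (which is the regime of interest); your argument only uses $\eta\in C^2$, which follows from $H^3_\sharp(0,L)\hookrightarrow C^2$, so the density remark in your write-up is in fact superfluous. For the second stage you derive the Agmon-type bound $\|\partial_{xx}\eta;L^\infty_\sharp\|\leq \sqrt2\,\|\partial_{xx}\eta;L^2_\sharp\|^{1/2}\|\partial_{xxx}\eta;L^2_\sharp\|^{1/2}$ by hand from the vanishing of $\partial_{xx}\eta$ at some point, and get \eqref{esteta2} via Poincar\'e--Wirtinger; the paper quotes the same interpolation inequality and the embedding $H^1_\sharp\cap L^2_{\sharp,0}\hookrightarrow L^\infty_\sharp$, so this part is equivalent (note your constant in \eqref{esteta2}, like the paper's, depends on $L$ through Poincar\'e, which is harmless).
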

\begin{proof}
Let us denote $z\in H^3_{\sharp}(0,L)$  a non-negative function. 
\begin{equation} \label{prmax_z}
|\partial_x z(x)| \leq \||\partial_x z|^2 + z\partial_{xx} z ; L^{\infty}_{\sharp}(0,L)\|^{\frac{1}{2}}\,, \quad \forall \, x \in (0,L).
\end{equation}
Indeed, as $z$ is $L$-periodic, $\partial_x z$ reaches its maximal and minimal values. 
Then, if $\partial_x z$ is maximal at $x_0 \in (0,L)$, then  $\partial_{xx} z(x_0) = 0$ and consequently
$$
|\partial_x z(x_0)| \leq \sqrt{|\partial_x z(x_0)|^2 + z\partial_{xx} z(x_0)} \leq  \||\partial_x z|^2 + z\partial_{xx} z ; L^{\infty}_{\sharp}(0,L)\|^{\frac{1}{2}}\,.
$$
We get a similar inequality where $\partial_x z$ is minimal and obtain \eqref{prmax_z}. 
 
 \medskip
 
We apply now the previous estimate with
$$
z(x) = \sqrt{\eta(x)} \,, \quad \forall \, x \in  (0,L).
$$
Of course $z$ is a non-negative $L$-periodic function which belongs to $H^3_\sharp(0,L)$ so  it satisfies \eqref{prmax_z}.
Replacing $z$ by $\sqrt{\eta}$  leads to
$$
|\partial_x \eta(x)|  \leq C \sqrt{\|\partial_{xx} \eta ; L^{\infty}_{\sharp}(0,L)\|\, \eta(x)} \,, \quad \forall \, x \in (0,L).$$ 
Thanks to the continuous embedding $H^1_{\sharp}(0,L) \cap L^{2}_{\sharp,0}(0,L)\,$ in $L^{\infty}_{\sharp}(0,L)$, we obtain \eqref{esteta2} and the interpolation inequality $\|\partial_{xx} \eta ; L^{\infty}_{\sharp}(0,L)\|\leq \|\partial_{xx} \eta ; L^{2}_{\sharp}(0,L)\|^{1/2}\|\partial_{xxx} \eta ; L^{2}_{\sharp}(0,L)\|^{1/2}$ implies that \eqref{esteta1} is satisfied.

\end{proof}

\begin{proposition} \label{prop_controlparH2}
There exists a continuous function $D_{min} : \mathbb (0,\infty) \times \mathbb R^+ \to (0,\infty)$
such that, given a non-negative function $\eta \in H^2_{\sharp}(0,L)$ there holds 
\begin{equation} \label{eq_controlparH2}
\|\eta^{-1}  \, ; \,  L^{\infty}_{\sharp}(0,L)\| \leq D_{min} \left( \|\eta^{-1} \, ; \, L^1_{\sharp} (0,L)\|, \|\eta, H^2_{\sharp}(0,L)\| \right)\,.
\end{equation}
\end{proposition}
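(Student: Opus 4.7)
The plan is to exploit the fact that in one dimension $H^2_{\sharp}(0,L)\hookrightarrow C^{1,1/2}_{\sharp}(0,L)$. Roughly speaking, if the minimum $m$ of $\eta$ is very small, the $C^{1,1/2}$ regularity prevents $\eta$ from growing away from $m$ too quickly, which forces $1/\eta$ to have a large $L^1$-norm on a neighborhood of the minimum, contradicting the assumption.

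More precisely, I would proceed as follows. Let $m := \min_{[0,L]}\eta$, which is attained at some $x_0$ by continuity and $L$-periodicity; up to a translation we may assume $x_0\in(0,L)$ so that $\partial_x\eta(x_0)=0$. Writing
\[
\partial_x\eta(x) = \int_{x_0}^{x}\partial_{xx}\eta(s)\,\mathrm{d}s,
\]
the Cauchy-Schwarz inequality yields $|\partial_x\eta(x)|\leq \|\partial_{xx}\eta;L^2_\sharp(0,L)\|\,|x-x_0|^{1/2}$ for $x$ in a neighborhood of $x_0$. Integrating once more gives the pointwise upper bound
\[
\eta(x)\leq m + \tfrac{2}{3}\|\partial_{xx}\eta;L^2_\sharp(0,L)\|\,|x-x_0|^{3/2}.
\]
Set $A:=\|\partial_{xx}\eta;L^2_\sharp(0,L)\|$. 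The threshold $\delta>0$ at which the second term equals $m$ is given by $\delta = (3m/(2A))^{2/3}$ (with $A>0$; the case $A=0$ is trivial since then $\eta$ is constant by periodicity, whence $m=\eta\equiv L/\|\eta^{-1};L^1_\sharp(0,L)\|$).

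Two cases then occur. If $\delta\leq L/2$, the interval $I:=(x_0-\delta,x_0+\delta)$ (mod $L$) lies in $(0,L)$ after a suitable translation, and on $I$ we have $\eta(x)\leq 2m$, whence
\[
\|\eta^{-1};L^1_\sharp(0,L)\|\geq \int_I\frac{\mathrm{d}x}{\eta(x)}\geq \frac{2\delta}{2m} = \frac{\delta}{m} = \Bigl(\tfrac{3}{2A}\Bigr)^{2/3}\frac{1}{m^{1/3}},
\]
which rearranges to $1/m\leq \bigl(\tfrac{2}{3}\bigr)^2 A^{2}\,\|\eta^{-1};L^1_\sharp(0,L)\|^{3}$. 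If instead $\delta>L/2$, then by definition of $\delta$ we already have the trivial lower bound $m\geq \tfrac{2}{3}A\,(L/2)^{3/2}$, which gives an upper bound on $1/m$ purely in terms of $A$. In both cases, using $A\leq \|\eta;H^2_\sharp(0,L)\|$, we obtain an estimate of the form
\[
\|\eta^{-1};L^\infty_\sharp(0,L)\|\leq D_{min}\bigl(\|\eta^{-1};L^1_\sharp(0,L)\|,\|\eta;H^2_\sharp(0,L)\|\bigr),
\]
with $D_{min}$ the maximum of the two continuous expressions just derived, which is itself continuous.

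No step is genuinely delicate; the only points that require some care are (i) using the $L$-periodicity to ensure $\partial_x\eta$ vanishes at the minimum and that the neighborhood $I$ of $x_0$ fits inside a fundamental period when $\delta\leq L/2$, and (ii) combining the two regimes $\delta\leq L/2$ and $\delta>L/2$ into a single continuous function $D_{min}$ of the two norms.
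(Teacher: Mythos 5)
Your core mechanism --- the Cauchy--Schwarz bound $\eta(x)\le m+\tfrac23 A\,|x-x_0|^{3/2}$ at the minimizer (with $A=\|\partial_{xx}\eta ; L^2_{\sharp}(0,L)\|$) and the resulting lower bound on $\int\eta^{-1}$ near $x_0$ --- is exactly the mechanism of the paper's proof, and your case $\delta\le L/2$ is correct. The gap is in the regime $\delta>L/2$ and in the final assembly of $D_{min}$. There you retain the bound $1/m\le \tfrac{3}{2A}(2/L)^{3/2}$. First, this cannot be converted into a bound by a function of $\|\eta ; H^2_{\sharp}(0,L)\|$: the expression is \emph{decreasing} in $A$, so the substitution $A\le\|\eta;H^2_{\sharp}(0,L)\|$ goes the wrong way (and $A$ can indeed be arbitrarily small while the $H^2$-norm stays large, e.g.\ $\eta$ close to a large constant). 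Second, even at face value the expression blows up as $A\to0^+$, so the ``maximum of the two continuous expressions'' is neither finite nor continuous when the second argument tends to $0$, and it does not reconcile with the constant case $1/m=\|\eta^{-1};L^1_{\sharp}(0,L)\|/L$ that you treat separately; hence the continuous $D_{min}$ on $(0,\infty)\times\mathbb R^+$ asserted in the statement is not actually produced by your construction.

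The repair is immediate and keeps your structure: when $\delta>L/2$ the growth bound gives $\eta(x)\le 2m$ for all $|x-x_0|\le L/2$, i.e.\ on a full period, whence $\|\eta^{-1};L^1_{\sharp}(0,L)\|\ge L/(2m)$ and $1/m\le 2\|\eta^{-1};L^1_{\sharp}(0,L)\|/L$, a bound independent of $A$. Then $D_{min}(\alpha,\beta)=\max\bigl(\tfrac49\beta^2\alpha^3,\,2\alpha/L\bigr)$ is continuous on $(0,\infty)\times\mathbb R^+$ and covers all cases, including $A=0$. With this modification your proof becomes a slightly more elementary variant of the paper's argument: the paper does not split into cases but integrates $1/(\eta_m+M|s|^{3/2})$ exactly over one period, encodes the result in an auxiliary function $\phi$, closes the resulting implicit inequality using $\|\eta^{-1};L^1_{\sharp}(0,L)\|\le L/\eta_m$, and obtains continuity at vanishing second argument from the asymptotics of $\phi$; your threshold-interval argument avoids $\phi$ at the price of the case distinction, which is precisely where the care is needed.
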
 
\begin{proof}
Without further restriction, we assume $\eta$ to be smooth. We note that $\|\eta^{-1}  \, ; \,  L^{\infty}_{\sharp}(0,L)\|$ is achieved  for some
$x_0 \in [0,L].$ Then, as $\partial_x \eta(x_0) =0$ we have:
$$
\eta(x) = \eta(x_0) + \int_{x_0}^{x} (s-x_0)\partial_{xx} \eta(s) {\rm d}s\,,
$$
At this point, we consider two cases. 
First, if $\|\partial_{xx} \eta ; H^2_{\sharp}(0,L)\| =0,$ we obtain
\begin{equation} \label{eq_case1}
\|\eta^{-1} ; L^1_{\sharp}(0,L)\| = L\|\eta^{-1} ; L^{\infty}_{\sharp}(0,L)\|.
\end{equation}
Second, if  $\|\partial_{xx} \eta ; H^2_{\sharp}(0,L)\| > 0,$
we have the bound:
$$
\eta(x) \leq \eta(x_0) +\dfrac{1}{\sqrt{3}} \|\eta; H^2_{\sharp}\| |x-x_0|^{\frac{3}{2}}\,, \quad \forall \, x \in (x_0 - L, x_0+L).
$$
For simplicity, let denote $\eta_m := \eta(x_0)$ and $M = \|\eta; H^2_{\sharp}(0,L)\|/3.$ We have
then 
\begin{eqnarray*}
\|\eta^{-1};L^1_{\sharp}(0,L)\|& =&  \int_{x_0}^{x_0+L} \dfrac{{\rm d}s}{\eta(s)} 
			\geq  \int_{0}^{L} \dfrac{{\rm d}s}{ \eta_{m} + M |s|^{\frac{3}{2}}} 
			 \geq \dfrac{1}{M^{\frac{2}{3}}\eta_m^{\frac{1}{3}}} \phi\left( \left[\dfrac{M}{\eta_m}\right]^{\frac{2}{3}}\right)\,,
\end{eqnarray*}
where we applied a straightforward change of variable to introduce 
$$
\phi(\sigma) := \int_{0}^{L \sigma} \dfrac{{\rm d}z}{(1+|z|^{\frac{3}{2}})}\,. 
$$
We remark that  $\phi : \mathbb R^+ \to \mathbb R$ is an increasing continuous function such that
$$
\phi(\sigma) \sim 
\left\{
\begin{array}{ll}
L\sigma & \text{ for $\sigma << 1$}\,, \\[8pt]
\displaystyle\int_{\mathbb R} \dfrac{{\rm d}z}{{1+|z|^{\frac{3}{2}}}} & \text{ for $\sigma>>1.$}
\end{array}
\right.
$$
Applying furthermore that $\|\eta^{-1} ; L^1_{\sharp}(0,L)\| \leq  L \|\eta^{-1} ; L^{\infty}_{\sharp}(0,L)\| = L \eta_m$, 
we obtain
$$
\|\eta^{-1} ; L^{1}_{\sharp}(0,L)\| \geq \dfrac{1}{M^{\frac 23}\eta_m^{\frac 13}} \phi\left( \left[\frac{{M} \|\eta^{-1} ; L^{1}_{\sharp}(0,L)\| }{L}\right]^{\frac{2}{3}}\right)
$$
Finally, if $\|\eta ; H^2_{\sharp}(0,L)\| >0,$ we have :
\begin{equation} \label{eq_case2}
\|\eta^{-1} ; L^{\infty}_{\sharp}(0,L)\| \leq  \|\eta ; H^2_{\sharp}(0,L)\|^{2}\|\eta^{-1} ; L^{1}_{\sharp}(0,L)\|^3  {\left[ \phi\left( \left[\dfrac{ \|\eta ; H^2_{\sharp}(0,L)\| \|\eta^{-1} ; L^{1}_{\sharp}(0,L)\|}{L} \right]^{\frac{2}{3}}\right) \right]^{-3}},
\end{equation}
The above expansion of $\phi(\sigma)$ for small values of $\sigma$ yields that, when $\|\eta ; H^2_{\sharp}(0,L)\| << 1$ with $\|\eta^{-1} ; L^{1}_{\sharp}(0,L)\|$ bounded, there holds 
$$
  \|\eta ; H^2_{\sharp}(0,L)\|^{2}\|\eta^{-1} ; L^{1}_{\sharp}(0,L)\|^3 { 
 \left[ 
 \phi\left( \left[\dfrac{ \|\eta ; H^2_{\sharp}(0,L)\| \|\eta^{-1} ; L^{1}_{\sharp}(0,L)\|}{L} \right]^{\frac{2}{3}}\right)
 \right]^{-3}
 } 
  \sim  \dfrac{1}{L} \|\eta^{-1} ; L^1_{\sharp}(0,L)\|\,.
$$
Hence, we set 
$$
D_{min} (\alpha, \beta) = 
\dfrac{\alpha}{L}\,, \text{ if $\beta = 0,$}
\qquad
D_{min} (\alpha, \beta) = 
\dfrac{\beta^{2}\alpha^3}{9} {\left[\phi\left( \left[\dfrac{ \beta\alpha}{L} \right]^{\frac{2}{3}}\right)\right]^{-3}}\,,  \text{ else}\,.
$$
Because of the previous arguments, this is a continuous function on $(0,\infty) \times [0,\infty)$
which satisfies \eqref{eq_controlparH2}.
\end{proof}

\subsection{Estimates on the stream-function $\psi$} \label{app_psi}
In this appendix, we gather technical estimates regarding the stream-function
$$
\psi(x,y,t) = \partial_{x} h(x,t) \chi_0\left( \dfrac{y}{h(x,t)}\right) \quad \forall \, (x,y,t ) \in \mathcal Q_T,
$$
where $h \in H^{2}(0,T; L^2_{\sharp}(0,L)) \cap L^2(0,T;H^4_{\sharp}(0,L))$ is given and remains strictly positive on $(0,T).$
We recall that 
$$
\chi_0(z) = z^2(3-2z) \quad \forall \, z \in (0,1)\,,
$$
that  $\mathcal{F}(t)$ stands for the fluid domain at time $t$ and $\mathcal Q_T$ is the time-space
fluid domain.  With these notations, we prove:
\begin{proposition}
\label{propo:psi}
There exists a universal constant $C<\infty$ for which:
\begin{itemize}
\item for all $t \in (0,T)$ 
\begin{eqnarray} \label{eq_nablapsi}
& |\nabla \psi(x,y,t)| \leq C\Bigg[ |\partial_{xx} h(x,t)| + \dfrac{|\partial_{x} h(x,t)|}{h(x,t)}  +  \dfrac{|\partial_{x} h(x,t)|^2}{h(x,t)}\Bigg] \,, \quad \forall \, (x,y) \in \mathcal F(t)\,,&
\\\label{eq_dxpsi}
&\|\partial_y \psi (\cdot,t) ; L^2(\mathcal F (t))\| \leq C   \|h ; L^{\infty}_{\sharp}(0,L)\|^{\frac{1}{4}} \|\partial_{xx} h ; L^2_{\sharp}(0,L)\|^{\frac{1}{2}} \left[ \displaystyle{\int_{0}^{L}} \dfrac {{\rm d}x}{h(x,t)}   \right]^{\frac{1}{4}}\,,& \\[8pt]\label{eq_dypsi}
&\|\partial_x \psi (\cdot,t) ; L^2(\mathcal F (t))\| \leq C \|h ; L^{\infty}_{\sharp}(0,L)\|^{\frac{1}{2}}\|\partial_{xx} h ; L^2_{\sharp}(0,L)\|\,;&
\end{eqnarray}
\item on the whole time-space domain 
\begin{eqnarray}\label{eq_dtpsi}
\|\partial_{t} \psi ; L^2(\mathcal Q _T)\| &\leq& C\Bigg[ \int_0^T  \Big( \|h ; L^{\infty}_{\sharp}(0,L)\| \|\partial_{xt} h ; L^2_{\sharp}(0,L)\|^2\\&&\qquad\nonumber + \|\partial_t h ;L^2_{\sharp}(0,L)\|^2 \|\partial_{xxx} h ;L^2_{\sharp}(0,L)\|\Big) \Bigg]^{\frac{1}{2}}\,,\\ \label{eq_dxxpsi}
\|\partial_{xx} \psi ; L^2(\mathcal Q _T)\| &\leq& C \Bigg[\int_{0}^T \Big(  \|h ; L^{\infty}_{\sharp}(0,L)\| \|\partial_{xxx} h ; L^2_{\sharp}(0,L)\|^2 \\&&\qquad\nonumber +  \|\partial_{xx} h ; L^2_{\sharp}(0,L)\|^{\frac{3}{2}}\|\partial_{xxx}h ; L^2_{\sharp}(0,L)\|^{\frac{3}{2}}\Big) \Bigg]^{\frac 12}\,. 
\end{eqnarray}
\end{itemize}

\end{proposition}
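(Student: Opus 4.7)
}

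The function $\psi$ is a product of a function of $x,t$ (namely $\partial_x h$) and the smooth bounded profile $\chi_0$ evaluated at $z := y/h(x,t) \in [0,1]$. My plan is to differentiate explicitly and then invoke the functional inequalities of Appendix \ref{app_inequalities}. Writing everywhere $z=y/h$, direct computation gives
\begin{equation*}
\partial_y\psi = \frac{\partial_x h}{h}\,\chi_0'(z),
\qquad
\partial_x\psi = \partial_{xx}h\,\chi_0(z) - \frac{(\partial_x h)^2}{h}\, z\chi_0'(z),
\end{equation*}
and, expanding carefully, an expression for $\partial_{xx}\psi$ which is a linear combination (with coefficients polynomial in $\chi_0,\chi_0',\chi_0''$ evaluated at $z\in[0,1]$) of $\partial_{xxx}h$, $(\partial_x h)(\partial_{xx}h)/h$ and $(\partial_x h)^3/h^2$. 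Since $\chi_0$ and all its derivatives are uniformly bounded on $[0,1]$ and $0\leq z \leq 1$, the pointwise bound \eqref{eq_nablapsi} is immediate from the formulas for $\partial_x \psi$ and $\partial_y \psi$.

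To prove \eqref{eq_dxpsi} and \eqref{eq_dypsi}, I would integrate the squared pointwise bounds in $y$ over $(0,h(x))$, producing one-dimensional integrals of the form $\int h(\partial_{xx}h)^2$, $\int (\partial_x h)^2/h$, $\int (\partial_x h)^4/h$ and $\int (\partial_x h)^4/h^2$. The terms linear in $(\partial_{xx}h)^2$ are directly controlled by $\|h\|_{L^\infty}\|\partial_{xx}h\|_{L^2}^2$. For the remaining nonlinear terms the key tool is Proposition \ref{prop_below1}. Since the bound we need on $\int (\partial_x h)^4/h$ corresponds to the critical value $\alpha=1/4$ that is excluded from Proposition \ref{prop_below1}, I would interpolate via H\"older between two admissible exponents $\alpha_1<1/4<\alpha_2$ satisfying $\alpha_1\theta+\alpha_2(1-\theta)=1/4$; this yields $\int (\partial_x h)^4/h \leq C\|\partial_{xx}h\|_{L^2}^2\|h\|_{L^\infty}$. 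The estimate \eqref{eq_dypsi} then follows by combining the linear bound with this nonlinear bound. For \eqref{eq_dxpsi} I would apply Cauchy--Schwarz,
\begin{equation*}
\int_0^L \frac{(\partial_x h)^2}{h}\,{\rm d}x \leq \Bigl(\int_0^L \frac{(\partial_x h)^4}{h}\Bigr)^{1/2}\Bigl(\int_0^L \frac{1}{h}\Bigr)^{1/2},
\end{equation*}
and plug in the interpolation bound to produce exactly the exponents stated.

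For the space-time estimate \eqref{eq_dtpsi}, the formula $\partial_t\psi = \partial_{xt}h\,\chi_0(z) - (\partial_x h)(\partial_t h)z\chi_0'(z)/h$ leads, after integrating in $y$ over $(0,h)$, to the two one-dimensional integrands $h(\partial_{xt}h)^2$ and $(\partial_x h)^2(\partial_t h)^2/h$. The first integrates to $\|h\|_{L^\infty}\|\partial_{xt}h\|_{L^2}^2$. For the second I would apply the pointwise estimate \eqref{esteta2} from Proposition \ref{prop_below2}, which yields $(\partial_x h)^2/h \leq C\|\partial_{xxx}h\|_{L^2}$, so that the integrand is dominated by $\|\partial_{xxx}h\|_{L^2}\|\partial_t h\|_{L^2}^2$. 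A further time integration gives \eqref{eq_dtpsi}.

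The estimate \eqref{eq_dxxpsi} is the most technical. After squaring the explicit expansion of $\partial_{xx}\psi$ and integrating in $y$, three types of integrands in $x$ remain: $h(\partial_{xxx}h)^2$, $(\partial_x h)^2(\partial_{xx}h)^2/h$ and $(\partial_x h)^6/h^3$. The first is dominated directly by $\|h\|_{L^\infty}\|\partial_{xxx}h\|_{L^2}^2$. For the other two I would use the sharper pointwise bound \eqref{esteta1}, i.e.\ $(\partial_x h)^2\leq C\|\partial_{xx}h\|_{L^2}^{1/2}\|\partial_{xxx}h\|_{L^2}^{1/2}\, h$, to obtain
\begin{equation*}
\int_0^L \frac{(\partial_x h)^2(\partial_{xx}h)^2}{h} \leq C\|\partial_{xx}h\|_{L^2}^{5/2}\|\partial_{xxx}h\|_{L^2}^{1/2},
\qquad
\int_0^L \frac{(\partial_x h)^6}{h^3} \leq CL\,\|\partial_{xx}h\|_{L^2}^{3/2}\|\partial_{xxx}h\|_{L^2}^{3/2},
\end{equation*}
and then the periodic Poincar\'e-type inequality $\|\partial_{xx}h\|_{L^2}\leq C\|\partial_{xxx}h\|_{L^2}$ (valid since $\partial_{xx}h$ has zero mean) absorbs the first bound into the form $\|\partial_{xx}h\|_{L^2}^{3/2}\|\partial_{xxx}h\|_{L^2}^{3/2}$. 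Integrating in time and taking square roots yields \eqref{eq_dxxpsi}. The main obstacle is purely computational: carefully expanding $\partial_{xx}\psi$ and \emph{choosing which} of Propositions \ref{prop_below1}, \ref{prop_below2} or the Cauchy--Schwarz splitting to apply so that exactly the advertised exponents of $\|h\|_{L^\infty}$, $\|\partial_{xx}h\|_{L^2}$, $\|\partial_{xxx}h\|_{L^2}$ and $\int 1/h$ appear.
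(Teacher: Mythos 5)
Your proposal is correct and follows essentially the same route as the paper: explicit differentiation of $\psi$, reduction to one-dimensional integrals in $x$ after integrating out $y$, and then Proposition \ref{prop_below1} for the $\int |\partial_x h|^4/h^{4\alpha}$-type terms and Proposition \ref{prop_below2} for the pointwise bounds. The only cosmetic differences are that the paper handles $\int |\partial_x h|^4/h$ by pulling out $\|h;L^\infty_\sharp\|^{1/2}$ and applying Proposition \ref{prop_below1} with $\alpha=3/8$ instead of your H\"older interpolation, and for the mixed term in $\partial_{xx}\psi$ it uses \eqref{esteta2} (getting $\|\partial_{xx}h\|^2\|\partial_{xxx}h\|$) rather than \eqref{esteta1}, with the same zero-mean Poincar\'e absorption you make explicit.
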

\begin{proof}
We start by the time-dependant estimate. We compute the partial derivative of the stream-function with respect to the space variables:
$$
\left\{
\begin{array}{rcl}
\partial_x \psi(x,y,t)& =& \partial_{xx} h(x,t)  \chi_0\left( \dfrac{y}{h(x,t)}\right)  - \dfrac{|\partial_x h(x,t)|^2y}{|h(x,t)|^2}  \chi'_0\left( \dfrac{y}{h(x,t)}\right)\,,\\[6pt]
\partial_y \psi(x,y,t) &= &\dfrac{\partial_x h(x,t)}{h(x,t)}  \chi'_0\left( \dfrac{y}{h(x,t)}\right)\,,
\end{array}
\right.
 \quad \forall \, (x,y,t) \in \mathcal Q_T.
$$
Thus \eqref{eq_nablapsi} is satisfied. Furthermore, we have, for all $t \in (0,T)$, recalling that $\chi_0 \in C^{\infty}([0,1])$: 
\begin{eqnarray*}
\|\partial_{y} \psi ; L^2(\mathcal F(t))\|^2 &=& \int_{0}^{L}\int_0^{h(x,t)} \dfrac{|\partial_x h(x,t)|^2}{|h(x,t)|^2} \left| \chi'_0\left( \dfrac{y}{h(x,t)}\right)\right|^2 {\rm d}y {\rm d}x \\
&=&   \int_{0}^{L}\int_0^1 \dfrac{|\partial_x h(x,t)|^2}{|h(x,t)|} \left| \chi'_0(z) \right|^2 {\rm d}y {\rm d}z. \\
\end{eqnarray*}


Then applying  Proposition \ref{prop_below1} with $\alpha=3/8$, we have:
\begin{eqnarray*}
\|\partial_{y} \psi ; L^2(\mathcal F(t))\|^2 
& \leq C & \left[ \int_{0}^{L} \dfrac{|\partial_x h(x,t)|^4}{|h(x,t)|} {\rm d}x\right]^{\frac{1}{2}} \left[ \int_{0}^{L} \dfrac {{\rm d}x}{h(x,t)}   \right]^{\frac{1}{2}}\,\\
& \leq C & \|h ; L^{\infty}_{\sharp}(0,L)\|^{\frac{1}{4}}\left[ \int_{0}^{L} \dfrac{|\partial_x h(x,t)|^4}{|h(x,t)|^{\frac32}} {\rm d}x\right]^{\frac{1}{2}} \left[ \int_{0}^{L} \dfrac {{\rm d}x}{h(x,t)}   \right]^{\frac{1}{2}}\\
& \leq C & \|h ; L^{\infty}_{\sharp}(0,L)\|^{\frac{1}{2}} \|\partial_{xx} h ; L^2_{\sharp}(0,L)\| \left[ \int_{0}^{L} \dfrac {{\rm d}x}{h(x,t)}   \right]^{\frac{1}{2}}.
\end{eqnarray*}
With similar arguments, we easily obtain
$$
\|\partial_{x} \psi ; L^2(\mathcal F(t))\|^2   \leq C \left[ \|h ; L^{\infty}_{\sharp}(0,L)\|\|\partial_{xx} h ; L^2_{\sharp}(0,L)\|^2 + \int_{0}^{L} \dfrac{|\partial_x h(x,t)|^4}{h(x,t)} {\rm d}x\right].
$$
Applying again Proposition \ref{prop_below1} with $\alpha=3/8$, the following estimate holds true in a similar  way as above
$$
\|\partial_{x} \psi ; L^2(\mathcal F(t))\|^2   \leq C  \|h ; L^{\infty}_{\sharp}(0,L)\|\|\partial_{xx} h ; L^2_{\sharp}(0,L)\|^2 \,.
$$
This ends the proof of \eqref{eq_dxpsi}-\eqref{eq_dypsi}.
\medskip

We  now take care of the two other estimates. A simple calculation gives
$$
\partial_t \psi(x,y,t) =   \partial_{xt} h(x,t)  \chi_0\left( \dfrac{y}{h(x,t)}\right)  - \dfrac{ \partial_x h(x,t) \partial_t h(x,t) y}{|h(x,t)|^2}  \chi'_0\left( \dfrac{y}{h(x,t)}\right)\,,\quad 
\forall \, (x,y,t) \in \mathcal Q_T.
$$
Consequently, we have 
$$
\|\partial_{t} \psi ; L^2(\mathcal Q_T)\|^2   \leq C \int_0^T \left( \|h ; L^{\infty}_{\sharp}(0,L)\| \|\partial_{xt} h ; L^2_{\sharp}(0,L)\|^2 + \int_{0}^{L} \dfrac{ |\partial_x h(x,t)|^2 |\partial_t h(x,t)|^2}{h(x,t)} {\rm d}x \right)\,.
$$
With Proposition \ref{prop_below2} we bound the ratio ${|\partial_x h(x,t)|^2}/{h(t, x)}$ in the right-hand side, and we obtain
$$
\|\partial_{t} \psi ; L^2(\mathcal Q_T)\|^2  \leq C \int_0^T  \left( \|h ; L^{\infty}_{\sharp}(0,L)\| \|\partial_{xt} h ; L^2_{\sharp}(0,L)\|^2 + \|\partial_t h ;L^2_{\sharp}(0,L)\|^2 \|\partial_{xxx} h ;L^2_{\sharp}(0,L)\|\right)\,.
$$
Finally, we estimate $\partial_{xx} \psi$ which is equal to
$$
\partial_{xx} \psi (x,y, t)= \partial_{xxx} h(x,t) \, \chi_0 (\frac{y}{h(x,t)} )- \dfrac{3 y \partial_{xx} h(x,t)  \partial_x h(x,t) }{(h(x,t))^2}\chi_0' (\frac{y}{h(x,t)} )+ \dfrac{(\partial_x h(x,t))^3 y^2}{(h(x,t))^4}\chi_0^{''}(\frac{y}{h(x,t)} )\,,
$$
Consequently, the following estimate holds true
\begin{eqnarray*}
\|\partial_{xx} \psi ; L^2(\mathcal Q_T)\|^2  &\leq &C \int_{0}^T \left[ \phantom{\int} \|h ; L^{\infty}_{\sharp}(0,L)\| \|\partial_{xxx} h ; L^2_{\sharp}(0,L)\|^2\right. \\
&&\quad+\left.\int_{0}^{L} \left(\dfrac{ |\partial_{xx} h(x,t)|^2 |\partial_x h(x,t)|^2}{h(x,t)} + \dfrac{|\partial_{x} h(x,t)|^6}{|h(x,t)|^3}{\rm d}x \right)\right]\,.
\end{eqnarray*}
By estimates \eqref{esteta1} and \eqref{esteta2}  of Proposition \ref{prop_below2}, we obtain 
\begin{eqnarray*}
\|\partial_{xx} \psi ; L^2(\mathcal Q_T)\|^2 &\leq& C\int_{0}^T \Bigl[  \|h ; L^{\infty}_{\sharp}(0,L)\| \|\partial_{xxx} h ; L^2_{\sharp}(0,L)\|^2 \\
&&\quad + \|\partial_{xx} h ;L^2_{\sharp}(0,L)\|^2 \|\partial_{xxx} h; L^2_{\sharp}(0,L)\| +  \|\partial_{xx} h ; L^2_{\sharp}(0,L)\|^{\frac{3}{2}}\|\partial_{xxx}h ; L^2_{\sharp}(0,L)\|^{\frac{3}{2}}\Bigr]\,.
\end{eqnarray*}
Finally \eqref{eq_dxxpsi} is satisfied. 
\end{proof}

\medskip

\section{Technical lemmas for the regularity estimates}\label{App:reg}
In this second Appendix, we collect technical results that are applied in the proof of {Proposition \ref{prop_estreg}}.
We first construct the lifting velocity-field $\hat{\Lambda}$ and prove then two identities that are central in the proof of our regularity
estimates.

\subsection{Construction of $\hat \Lambda$}
We construct a time-frozen operator $U_h$ which satisfies the equivalent properties to the one we require for $\hat{\Lambda}.$ We shall then set $\hat{\Lambda} = U_{h}[\partial_t h].$ The construction 
of $U_h$ is the content of the following proposition:  
\begin{proposition} \label{prop_uetoile}
Let  $h \in H^2_{\sharp}(0,L)$ such that $h^{-1} \in L^{\infty}_{\sharp}(0,L),$ there exists a continuous linear mapping  $U_{h}  : H^{ 1}_{\sharp}(0,L) \cap L^{2}_{\sharp,0}(0,L)\to  H^1_{\sharp}(\Omega_h)$ s.t. :
\begin{itemize}
\item for all $\dot{\eta} \in H^1_{\sharp}(0,L) \cap  L^{2}_{\sharp,0}(0,L)$ we have 
\begin{align*}
& U_h[\dot{\eta}](x,0) = 0\,, \quad U_h[\dot{\eta}](x,h(x)) = \dot{\eta}(x) e_2 \,, \quad \forall \, x \in (0,L)\,,\\
& {\rm div} \, U_h[\dot{\eta}] = 0 \qquad \text{on $\Omega_h$}
\end{align*}
\item for all $\dot{\eta} \in H^2_{\sharp}(0,L) \cap  L^{2}_{\sharp,0}(0,L) $ there holds $U_h[\dot{\eta}] \in H^2_{\sharp}(\Omega_h)\,.$
\end{itemize}
Furthermore we construct $U_h$ such that: 
\begin{itemize}
\item 
$
\partial_2 U_h[\dot{\eta}](x,h(x)) = 0\,.
$
\item there exists a constant $K^l$ depending increasingly on $\|h ; H^2_{\sharp}(0,L)\| + \|h^{-1} ; L^{\infty}_{\sharp}(0,L)\|$ for which
\begin{eqnarray}
\label{est:Uh:L2}\|U_h[\dot{\eta}] ; L^2_{\sharp}(\Omega_h)\| &\leq& K^l \|\dot{\eta} ; L^2(0,L)\|\,,\\
\label{est:Uh:H1}\|U_h[\dot{\eta}] ; H^1_{\sharp}(\Omega_h)\| &\leq& K^l \|\dot{\eta} ; H^1(0,L)\|\,, \\
\label{est:Uh:H2}\|U_h[\dot{\eta}] ; H^2_{\sharp}(\Omega_h)\| &\leq& K^l \|\dot{\eta} ; H^2(0,L)\|\,, \quad \forall \, \dot{\eta} \in H^{2}_{\sharp}(0,L) \cap L^2_{\sharp,0}(0,L)\,.
\end{eqnarray} 
\end{itemize}
\end{proposition}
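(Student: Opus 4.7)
The plan is to build $U_h[\dot\eta]$ from an explicit stream-function construction on $\Omega_h$ and to transfer the estimates, when needed, to the reference rectangle $\Omega_1$ via the change of variable of Proposition \ref{prop_cdv}. Given $\dot\eta \in L^2_{\sharp,0}(0,L)$, set $\mathcal D(x) := \int_0^x \dot\eta(s)\,ds$, which is $L$-periodic thanks to the zero-mean hypothesis and satisfies $|\mathcal D(x)| \leq \sqrt{L}\,\|\dot\eta; L^2(0,L)\|$. Fix once for all a function $\theta \in C^\infty([0,1])$ satisfying $\theta(0) = \theta'(0) = 0$, $\theta(1) = 1$, $\theta'(1) = \theta''(1) = 0$ (the quartic $\theta(z) = 6z^2 - 8z^3 + 3z^4$ does the job), and define $\psi(x,y) := \mathcal D(x)\,\theta(y/h(x))$ on $\Omega_h$ and $U_h[\dot\eta] := \nabla^\perp \psi$. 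The five prescribed values of $\theta$ at $z=0$ and $z=1$ give, by direct substitution, the boundary traces $U_h(x,0) = 0$, $U_h(x,h(x)) = \dot\eta\, e_2$, and $\partial_2 U_h(x,h(x)) = 0$, while the curl structure provides the divergence-free property automatically.

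For the $L^2$ and $H^1$ estimates, I would expand $\nabla \psi$ and $\nabla^2 \psi$, perform the substitution $z = y/h(x)$ to reduce each integral to one over $(0,L)$ whose integrand is $|\dot\eta|^2$ or $|\mathcal D|^2$ times an explicit function of $h$, $h'$, $1/h$, and then estimate using the bound on $\mathcal D$ above together with the one-dimensional embedding $H^1(0,L) \hookrightarrow L^\infty(0,L)$ which yields $\|h'; L^\infty(0,L)\| \leq C\,\|h; H^2(0,L)\|$. All resulting constants depend only on $R_0 := \|h; H^2(0,L)\| + \|h^{-1}; L^\infty(0,L)\|$, giving \eqref{est:Uh:L2}--\eqref{est:Uh:H1}.

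The $H^2$ estimate \eqref{est:Uh:H2} is the main obstacle: differentiating $\psi$ three times in $x$ formally produces a term proportional to $h'''$, which fails to be square-integrable when $h \in H^2$ only. The plan is to replace the direct construction above by an equivalent one on $\Omega_1$: by Proposition \ref{prop_cdv} it suffices to produce $\hat U \in H^2(\Omega_1)$ satisfying the reference boundary conditions $\hat U(x,0) = 0$, $\hat U(x,1) = \dot\eta\, e_2$, $\partial_z \hat U(x,1) = 0$, the weighted incompressibility $\mathrm{div}(B_h^\top \hat U) = 0$ (which, through $U_h(x,y) = \hat U(x, y/h(x))$, is equivalent to $\mathrm{div}\,U_h = 0$ in $\Omega_h$), and the bound $\|\hat U; H^2(\Omega_1)\| \leq K(R_0)\,\|\dot\eta; H^2(0,L)\|$. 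I would decompose $\hat U = \hat V_0 + \hat W$, where $\hat V_0(x,z) := \dot\eta(x)\theta(z)\,e_2$ realizes the boundary conditions (with $\partial_z \hat V_0(x,1) = 0$ thanks to $\theta'(1) = 0$) but not the weighted incompressibility, and the corrector is built as $\hat W := B_h^{-\top}v$, with $v \in H^2(\Omega_1)$ a Bogovski-type solution of $\mathrm{div}\,v = -\dot\eta\,\theta'(z) = -\mathrm{div}(B_h^\top \hat V_0)$ vanishing on top and bottom and satisfying also $\partial_z v(x,1) = 0$; the conditions $\theta'(1) = \theta''(1) = 0$ ensure the decay of the right-hand side at $z=1$ required to produce such a refined lifting. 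The hard part will be the construction of $v$ with this extra Neumann-type condition at $z=1$, together with the verification that $B_h^{-\top}$ acts as a multiplier of $H^2(\Omega_1)$ with norm depending only on $R_0$, along the same lines as the multiplier estimates used in the proof of Lemma \ref{lem_ellest}.
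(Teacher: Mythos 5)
Your stream-function construction $\psi=\mathcal D(x)\,\theta(y/h(x))$ and the resulting boundary conditions and $L^2$/$H^1$ bounds are fine, and you correctly identify that the whole difficulty is the $H^2$ bound \eqref{est:Uh:H2} with $h$ merely in $H^2_\sharp(0,L)$. But your proposed fix does not remove this obstruction, it only relocates it: the corrector $\hat W=B_h^{-\top}v$ is in general \emph{not} in $H^2_\sharp(\Omega_1)$ under the standing hypotheses. Indeed, by \eqref{matrice:h} one has $B_h^{-\top}=\frac1h\begin{pmatrix}1&0\\ h'z& h\end{pmatrix}$, so the second component of $\hat W$ contains the factor $\frac{h'z}{h}\,v_1$, and two horizontal derivatives of this product produce a term $\frac{h'''}{h}\,z\,v_1$, which is meaningless for $h\in H^2_\sharp(0,L)$. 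Matrices of the class $H^1_\sharp((0,L);H^s(0,1))$, to which $B_h^{\pm1}$ belong, are multipliers of $H^1_\sharp(\Omega_1)$ --- that is exactly how they are used in the proof of Lemma \ref{lem_ellest} --- but they are \emph{not} multipliers of $H^2_\sharp(\Omega_1)$, so the verification you defer to ``the same lines as the multiplier estimates of Lemma \ref{lem_ellest}'' cannot close the argument. The paper itself flags this trap in the remark following Proposition \ref{prop_uetoile}: the alternative construction through $B_h^{-\top}$ ``requires more regularity of $h$''. Adding the Neumann-type condition $\partial_z v(x,1)=0$ addresses only the trace requirements, not this loss of one derivative of $h$, so the plan fails precisely at the step it was designed to repair.

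The missing idea is to perform the divergence-free lifting in a region where $h$ does not appear at all. Since $h\ge 1/R_0$, fix $\lambda=1/(2R_0)$ and set $U_h[\dot\eta]=\dot\eta\,e_2$ on $\Omega_h\cap\{y>\lambda\}$: this field is divergence free, equals the data on $y=h(x)$, has vanishing vertical derivative there, and lies in $H^2$ with norm $\le C(R_0)\|\dot\eta;H^2_\sharp(0,L)\|$ because no composition with $y/h(x)$ and no factor $h'$ is involved. Then carry out the entire lifting inside the fixed rectangle $(0,L)\times(0,\lambda)$ by an $h$-independent divergence-free field (the paper uses a Fourier-series stream function with profiles $Q(\min(|n|z,1))$, which even gains half a derivative), matching the value $\dot\eta\,e_2$ and a vanishing normal derivative at $y=\lambda$ so that the glued field is $H^2_\sharp(\Omega_h)$. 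With such a two-region construction, every quantity that gets differentiated twice depends on $h$ only through the constant $\lambda$, and \eqref{est:Uh:H2} follows with $K^l=K^l(R_0)$.
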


\begin{proof}

We consider $h \in H^2_{\sharp}(0,L)$ which does not vanish on $(0, L)$ and assume that 
$$
\|h ; H^2_{\sharp}(0,L)\| + \|h^{-1} ; L^{\infty}_{\sharp}(0,L)\| \leq R_0.
$$
We construct $U_h$ in order that \eqref{est:Uh:L2}-\eqref{est:Uh:H1}-\eqref{est:Uh:H2} holds with a constant $K^l$ depending on $R_0$ only.  

\medskip

Fix $\lambda = 1/(2R_0)$ so that $\min_{x\in [0,L]} h(x) \geq 2 \lambda$. Assume that $\dot{\eta} \in H^1_{\sharp}(0, L).$
We  define $U_h$ as
\begin{equation}
\label{def:Uh}
U_h[\dot\eta]=\left\{
\begin{array}{ll}
\dot \eta e_2, \mbox{ in } \Omega_h\setminus \overline{ (0,L)\times (0, \lambda)},\\[2ex]
U_\lambda[\dot\eta] , \mbox{ in } (0,L)\times (0, \lambda).
\end{array}
\right.
\end{equation}
Note that the restriction of $U_h$ to $\Omega_h\setminus \overline{ (0,L)\times (0, \lambda)}$ obviously satisfies continuity estimates similar to   \eqref{est:Uh:L2}-\eqref{est:Uh:H1}-\eqref{est:Uh:H2}.
With this step, we reduce the construction to a rectangular box. In order to change the boundary on which we have to match the data, we define $U_\lambda[\dot{\eta}]$ by:
$$
U_\lambda[\dot{\eta}](x,y) = \begin{pmatrix} -\frac 1\lambda  & 0 \\ 0 & 1\end{pmatrix}\tilde{U}[\dot{\eta}]\left(x, \dfrac{\lambda - y}{\lambda} \right)\,, \quad \forall \, (x,y) \in  (0,L)\times (0, \lambda)\,.
$$

Let us define now $\tilde{U}[\dot{\eta}]$. We expand $\dot{\eta}\in H^2_\sharp(0, L)$ in Fourier
series as
$$
\dot{\eta}(x) = \sum_{n \in \mathbb Z} \dot{\eta}_{n} \exp (in  {\pi x\over L})\,.
$$ 
By assumption, we have $\dot{\eta}_0 =0\,.$ Let $k \in \mathbb N.$ Then, choosing $Q$ a polynomial function such that:
\begin{eqnarray}
Q(0) &=& 1  \quad Q'(0) = Q^{''}(0) = 0 \, \\
Q^{(l)}(1) &=& 0 \quad \forall \, l \leq k+1\,.   
\end{eqnarray}
and, for all $n\in \mathbb Z$: 
$$ 
P_n (z) := Q(\min(|n|z,1)) \quad \forall \, z \in [0,\infty)\,,
$$
we obtain that $P_n \in C^{k+1}([0,1])\,.$ Consequently, we define 
$$
\tilde{U} [\dot{\eta}]= \nabla^{\bot} \Psi, \quad \mbox{ with }\Psi(x,z) := \sum_{n\in \mathbb Z} \dfrac{L\dot{\eta}_n}{in\pi} P_{n}(z) \exp(in{\pi x\over L})\,.
$$
With this definition we have 
\begin{eqnarray}
\label{eq_Ut1}&& \text{$\tilde{U}[\dot{\eta}](x,0) = \dot{\eta}(x)e_2 $ and $U[\dot{\eta}](x,1) = 0$ for all $x \in (0,L)$,}\\[4pt]
\label{eq_Ut2}&& \partial_2 \tilde{U}[\dot{\eta}](x,0) = 0\,, \\[4pt]
\label{eq_Ut3} && \text{${\rm div} \, \tilde{U}[\dot{\eta}] =0$ on $\Omega_1$}.
\end{eqnarray}
Consequenlty thanks to \eqref{eq_Ut2}, we deduce classically that $U_h[\dot\eta]$ defined by \eqref{def:Uh} belongs to $H^2(\Omega_h)$.
Moreover, for all $(j,l) \in \mathbb N^2$  s.t. $\max(j,l) \leq k+1,$ we have, because of the $x$-orthogonality of the basis $x \mapsto \displaystyle\exp(in{\pi x\over L}),$ that
\begin{eqnarray*}
\|\partial^{j}_x\partial_z^{l} \Psi ; L^2_{\sharp}((0,L)\times (0,1))\|^2 &\leq& C
\sum_{n \in \mathbb Z}  \int_0^{\frac{1}{n}} |n|^{2(j+l-1)} |\dot{\eta}_n|^{2} |Q^{(l)} (|n|z)|^2 {\rm d}z\\
& \leq & C
\left( \sum_{n \in \mathbb Z}  |n|^{2(j+l)-3} |\dot{\eta}_n|^{2} \right) \int_{0}^1 |Q^{(l)} (z)|^2 {\rm d}z\\
& \leq & C \|\dot{\eta} ; H_{\sharp}^{j+l- \frac 32}(0,L )\|^2 \|Q ; H^{l}(0,1)\|^2\,.
\end{eqnarray*}
Consequently, when $\dot{\eta} \in H^{k -1/2}_{\sharp}(0,L),$ we get that $\tilde{U} [\dot{\eta}]\in H^{k}_{\sharp}(\Omega_1)$ and that  the following estimate is satisfied
\begin{equation} \label{eq_tildeU}
\|\tilde{U}[\dot{\eta}] ; H^{k}_{\sharp}(\Omega_1)\| \leq \|Q ; H^{k+1}(0,1)\| \|\dot{\eta} ; H^{k-\frac 12}_{\sharp}(0,L)\|.  
\end{equation}
Up to the change of variable which depends only on $\lambda$ and is thus bounded by a constant which depends only on $R_0,$ we obtain that $U_{\lambda}$ also satisfies continuity estimates similar to   \eqref{est:Uh:L2}-\eqref{est:Uh:H1}-\eqref{est:Uh:H2}. This ends the proof.
\end{proof}

\begin{rem}
An alternative construction for $U_h$ reads:
$$
U_h[\dot{\eta}] =B_h^{-\top}\begin{pmatrix} -1 & 0 \\ 0 & 1\end{pmatrix}\tilde{U}[\dot{\eta}]\left(x, \dfrac{ h(x)-y}{h(x)} \right)
$$
where $B_h$ is defined by \eqref{matrice:h}. With this construction, we may exploit the better continuity estimates for $\dot{\eta} \mapsto U_h[\dot{\eta}]$ (gain of 1/2 derivative, see \eqref{eq_tildeU}). Yet, this construction requires more regularity of $h$ than the construction given in Propostion \ref{prop_uetoile}.
\end{rem}
 \subsection{Regularity identities}
\begin{lemma} Let $h\in H^2(0, T; L^2(0, L)\cap L^2(0, T, H^4(0, L))$, such that $h^{-1}\in L^\infty((0, T)\times(0,L))$. For any triplet $(w, q, b)$ satisfying the regularity assumptions of Definition \ref{def_strongsolution},  namely
\begin{equation} \label{eq_regulariteb}
b \in H^2(0,T ; L^2_{\sharp}(0, L)) \cap L^2(0,T; H^4_{\sharp}(0, L)), 
\end{equation}
\begin{equation} \label{eq_regularitewq}
w\in H^1_{\sharp}(\mathcal Q_T)\,,  \quad \nabla^2 w \in L^2_{\sharp}(\mathcal Q_T)\,, \quad
q \in L^2_{\sharp}(\mathcal Q_T)\,, \quad \nabla q\in L^2_{\sharp}(\mathcal Q_T)\,,
\end{equation}
and such that $w(x, h(x,t), t)= \partial_t b(x,t) \in L^2_{\sharp,0}(0,L)$ and $\div w=0$, the following identities are satisfied:
\begin{multline}
\label{egalite_vitesse_fluid:app}
\int_{\mathcal Q_t} {\rm div} \sigma(w, q) \cdot   (\partial_t w + \hat{\Lambda} \cdot \nabla w - w \cdot \nabla \hat{\Lambda} )=- \int_0^t\int_{0}^{L} \phi(w,q,h) \partial_{tt} b\\
+\mu  \int_{\mathcal F(t)} |\nabla w|^{2} (t) -\mu  \int_{\mathcal F(0)} |\nabla w|^{2}(0) + 2\mu\int_{\mathcal  Q_t} D(w) :  \left([\nabla \hat{\Lambda}]^{\top} \nabla w + \nabla \hat{\Lambda} [ \nabla w ]^{\top} - D(w\cdot \nabla \hat{\Lambda}) \right),
\end{multline}
and
\begin{multline}
\label{egalite_vitesse_beam:app}
\alpha \int_0^t\int_0^L \partial_{xxxx}b \, \partial_{tt} b - \beta\int_0^t\int_0^L \partial_{xx}b \,\partial_{tt} b -\gamma\int_0^t\int_0^L \partial_{xxt}b\, \partial_{tt} b =\\
 \int_{0}^{L} \dfrac{\gamma}{2} |\partial_{tx} b(x, t)|^2\mathrm{d}x - \beta \partial_{t} b(x,t) \partial_{xx} b(x,t) \mathrm{d}x - \alpha \partial_{tx} b(x,t) \partial_{xxx} b (x,t) \mathrm{d}x
\\
- \beta \int_0^t\int_{0}^{L} |\partial_{tx} b |^2 - \alpha\int_0^t \int_{0}^{L} |\partial_{txx} b |^2
\\ -\int_{0}^{L} \dfrac{\gamma}{2} |\partial_{tx} b(x, 0)|^2 \mathrm{d}x + \beta \partial_t b(x, 0)
\partial_{xx} b(x, 0) \mathrm{d}x   - \alpha \partial_{tx}b(x,0)\partial_{xxx}  b(x,0)\mathrm{d}x  .
\end{multline}
\end{lemma}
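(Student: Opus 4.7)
The overall strategy is to regularize the triplet $(w,q,b)$, perform the integrations by parts on the smooth approximations, and pass to the limit using that every bilinear form appearing in \eqref{egalite_vitesse_fluid:app}--\eqref{egalite_vitesse_beam:app} is continuous for the norms available under \eqref{eq_regulariteb}--\eqref{eq_regularitewq}. The cleanest regularization proceeds through the reference-domain pullback of Section \ref{sec_loc}: transport $(w,q)$ to $\Omega_1$ via the change of variables $(x,z)\mapsto(x,zh(x,t))$, mollify the transported fields and $b$ in the time variable, and pull back to $\mathcal{Q}_T$. Since time-mollification commutes with the trace at $z=1$, the kinematic condition $w(x,h(x,t),t)=\partial_t b(x,t)$ survives the approximation. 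The time-boundary traces entering \eqref{egalite_vitesse_beam:app}, such as $\partial_{tx}b(\cdot,t)\,\partial_{xxx}b(\cdot,t)$ and $\partial_{t}b(\cdot,t)\,\partial_{xx}b(\cdot,t)$, are well defined in the limit thanks to the embeddings $b\in C([0,T];H^3_\sharp)$ and $\partial_t b\in C([0,T];H^1_\sharp)$, which follow from $b\in L^2(0,T;H^4_\sharp)\cap H^2(0,T;L^2_\sharp)$ by Lions--Magenes interpolation.

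For \eqref{egalite_vitesse_beam:app} the derivation is purely one-dimensional: once $b$ is smooth, three successive integrations by parts (in $x$, then in $t$, then in $x$ again) transform $\alpha\int_0^t\int_0^L\partial_{xxxx}b\,\partial_{tt}b$ into the boundary contribution $-\alpha\bigl[\int_0^L\partial_{xxx}b\,\partial_{tx}b\bigr]_0^t$ together with the volumic term $-\alpha\int_0^t\int_0^L|\partial_{txx}b|^2$. The $\beta$-term is handled identically, while the $\gamma$-term collapses to a perfect time-derivative via $-\gamma\partial_{xxt}b\,\partial_{tt}b = \gamma\partial_{xt}b\,\partial_{xtt}b = \frac{\gamma}{2}\partial_t|\partial_{xt}b|^2$ after a single integration by parts in $x$. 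Summing the three pieces yields exactly the right-hand side of \eqref{egalite_vitesse_beam:app}.

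The fluid identity rests on two geometric properties of the multiplier $v:=\partial_t w+\hat\Lambda\cdot\nabla w-w\cdot\nabla\hat\Lambda$. First, $v$ is divergence-free: using $\div w=\div\hat\Lambda=0$ one checks $\div(\hat\Lambda\cdot\nabla w)=[\nabla\hat\Lambda]^\top:\nabla w=\div(w\cdot\nabla\hat\Lambda)$, so the two convective terms contribute opposite divergences that cancel. Second, $v$ vanishes at $y=0$ and equals $\partial_{tt}b\,e_2$ at $y=h$: the latter follows from $\hat\Lambda=w=\partial_t b\,e_2$ and the condition $\partial_2\hat\Lambda=0$ on $y=h$, combined with time-differentiation of the kinematic condition. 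The standard Stokes integration by parts on $\mathcal F(s)$ therefore yields, after integration in time,
\[
\int_{\mathcal{Q}_t}\div\sigma(w,q)\cdot v = -\int_0^t\!\int_0^L\phi(w,q,h)\,\partial_{tt}b - 2\mu\int_{\mathcal{Q}_t}D(w):D(v).
\]
Expanding $D(v)=D(\partial_t w)+D(\hat\Lambda\cdot\nabla w)-D(w\cdot\nabla\hat\Lambda)$ and invoking the Korn identity $2|D(w)|^2=|\nabla w|^2$ from \eqref{eq_Korn} reduces the matter to rewriting $2\mu\int_{\mathcal{Q}_t}D(w):D(\partial_t w)$ plus the transport piece of $2\mu\int D(w):D(\hat\Lambda\cdot\nabla w)$ as $\mu\int_{\mathcal F(t)}|\nabla w|^2-\mu\int_{\mathcal F(0)}|\nabla w(0)|^2$.

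The main obstacle is precisely this last step, which requires a Reynolds transport argument on the moving set $\mathcal F(t)$. The boundary flux produced by the transport formula must cancel exactly against the divergence-like contribution $\hat\Lambda\cdot\nabla|D(w)|^2$ arising from $D(\hat\Lambda\cdot\nabla w)$; this cancellation is the very \emph{raison d'\^etre} of the multiplier $v$, in analogy with the rigid-body computation of \cite{Cum-Tak07}. Verifying the identity carefully, while tracking that $\hat\Lambda\cdot n$ on $y=h$ coincides with the normal velocity of $\partial\mathcal F(t)$ (a direct consequence of $\hat\Lambda=\partial_tb\,e_2=w$ there), is where the bulk of the technical work lies; the remaining cross terms $D(w):\bigl([\nabla\hat\Lambda]^\top\nabla w+\nabla\hat\Lambda[\nabla w]^\top\bigr)$ and $-D(w):D(w\cdot\nabla\hat\Lambda)$ are simply left on the right-hand side, producing the stated form.
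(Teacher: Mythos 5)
Your overall architecture---prove both identities for smooth fields, then conclude by approximation---is the same as the paper's, and your smooth-case computations (the one-dimensional integrations by parts for \eqref{egalite_vitesse_beam:app}, the divergence-free property of the multiplier $v$, the boundary algebra giving $v=\partial_{tt}b\,e_2$ and $w\cdot\nabla\hat\Lambda=0$ on $y=h$, the Reynolds transport cancellation, and the use of $b \in C([0,T];H^3_\sharp)$, $\partial_t b \in C([0,T];H^1_\sharp)$ to make sense of the time traces) coincide with the paper's. The genuine gap is in the regularization step. You pull $(w,q)$ back to $\Omega_1$ by the plain composition $(x,z)\mapsto(x,zh(x,t))$ and mollify in time. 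This preserves the kinematic trace condition, as you note, but it destroys the constraint $\div w=0$: under this change of variables incompressibility reads $\div (B_h^{\top}\hat w)=0$ with $B_h$ depending on $t$ through $h$, so time-mollification does not commute with it. The pulled-back approximants $w_n$ are therefore not divergence-free in $\mathcal F(t)$, and the identity you want to pass to the limit does not hold for them: the pressure no longer drops out (one picks up $\int_{\mathcal Q_t} q_n\, \div v_n$ with $\div v_n=\partial_t(\div w_n)+\hat\Lambda\cdot\nabla(\div w_n)$, where $\partial_t(\div w_n)$ costs a factor of the inverse mollification parameter and cannot be integrated by parts against $q$, which has no time regularity), and both the Korn identity $2\int |D(w_n)|^2=\int|\nabla w_n|^2$ and the cancellation $\div(\hat\Lambda\cdot\nabla w_n-w_n\cdot\nabla\hat\Lambda)=0$ fail. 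Since the whole point of the density argument is to approximate within the constrained class (divergence-free, coupled to a beam velocity on the top boundary), this is a real obstruction, not a technicality.

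The paper resolves precisely this point by transporting with the Piola-type map $w(x,y,s)=B_h^{-\top}(x,y/h(x,s))\,\hat w(x,y/h(x,s),s)$: by the Piola identity, $\div w=0$ in $\mathcal F(s)$ is equivalent to the time-independent constraint $\div \hat w=0$ in $\Omega_1$, and the trace coupling at $z=1$ is unchanged since $B_h^{\top}e_2=e_2$ there; it then approximates within this class (decomposition $\hat w=\hat w_0+\tilde U[\cdot]$, Stokes eigenmodes for $\hat w_0$, beam eigenmodes for $b$), which yields approximants with $\nabla\partial_t w_n\in L^2(\mathcal Q_t)$, the extra regularity under which the smooth-case computation is rigorous. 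Your scheme can be repaired by replacing the plain pullback with this cofactor transform (time-mollification then preserves both constraints), but as written the limiting step fails. Two smaller points: for a general triplet $(w,q,b)$ the trace of $\hat\Lambda$ on $y=h$ is $\partial_t h\, e_2$, not $\partial_t b\, e_2$ (they coincide only in the application $b=h$), and it is $\partial_t h$ that makes both the boundary identity and the transport theorem work; also, the transport computation produces $\frac\mu2\bigl[\int_{\mathcal F(s)}|\nabla w|^2\bigr]_{s=0}^{s=t}$ via $|D(w)|^2=\frac12|\nabla w|^2$, i.e.\ the signs and factors of the main-text identity \eqref{egalite_vitesse_fluid} rather than the constants you quote.
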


\begin{proof}
The second identity, involving $b$ only, is straightforward  since 
applying classical interpolation arguments (see in particular \cite[Theorem 3.1, p. 19]{LM72}) the regularity \eqref{eq_regulariteb} of $b$ implies
that 
\begin{equation} \label{eq_regularitebsupp}
b \in C^0(0, T; H_\sharp^3(0,L))\cap H^1(0,T; H^2_{\sharp}(0,L)) \cap C^1(0, T; H_\sharp^1(0,L))
\end{equation}
We now explain how one can obtain the first identity. Assuming that $(w, b)$ are regular enough we have
\begin{multline*}
\int_{\mathcal Q_t} {\rm div} \sigma(w, q) \cdot   (\partial_t w + \hat{\Lambda} \cdot \nabla w - w \cdot \nabla \hat{\Lambda} )=\\ \int_0^t\int_{y=h(x,s)} \sigma(w,q) n \cdot  (\partial_t w+ \hat{\Lambda} \cdot \nabla w - w \cdot \nabla \hat{\Lambda}) {\rm d}l {\rm d}t- \int_0^t\int_{\mathcal F(s)} \sigma(w,q) : \nabla (\partial_t w + \hat{\Lambda} \cdot \nabla w - w \cdot \nabla \hat{\Lambda})\,.
\end{multline*}
But by the assumption satisfied by $(w, b)$ on the boundary $y=h(x,t)$ and by construction of $\hat{\Lambda}$ 
$$
\begin{array}{rcl}
w \cdot \nabla \hat{\Lambda} = w_2 \partial_2 \Lambda & =& 0 \,, \\
\partial_t w +\hat{\Lambda} \cdot \nabla w - w \cdot \nabla \hat{\Lambda} &= &\partial_{tt} b\,,
\end{array}\quad \text{ on $y=h(x,t)$}\,.
$$
Combining this with the definition \eqref{eq_phi} of $\phi$, we have 
$$
\int_0^t\int_{y=h(x,s)} \sigma (w, q) n \cdot  (\partial_t w + \hat{\Lambda} \cdot \nabla w - w \cdot \nabla \hat{\Lambda}) {\rm d}l{\rm d}t  = - \int_0^t\int_{0}^{L} \phi(w,q,h) \partial_{tt} b \,.
$$

\medskip

On the other hand, we have by construction since $\div {w}=\div \hat \Lambda =0$ 
$$
{\rm div}  \left( \partial_t w + \hat{\Lambda} \cdot \nabla w-w \cdot \nabla \hat{\Lambda}\right) = 0\,.
$$
Consequently, recalling that $\mathcal F(t)$ moves along the characteristics of $\hat{\Lambda},$
we  obtain the following equality
\begin{eqnarray*}
&&\int_{\mathcal Q_t}  \sigma (w, q) : \nabla (\partial_t w + \hat{\Lambda} \cdot \nabla w - w\cdot \nabla \hat{\Lambda})
= 2\mu \int_{\mathcal Q_t} D(w) : D(\partial_t w + \hat{\Lambda} \cdot \nabla w - w\cdot \nabla \hat{\Lambda})) \\
&&= \mu  \int_{\mathcal F(t)} |D w|^{2}  -\mu  \int_{\mathcal F(0)} |D w|^{2}(0) + 2\mu
\int_{\mathcal  Q_t} D(w) : \left(  [\nabla \hat{\Lambda}]^{\top} \nabla w + \nabla \hat{\Lambda} [ \nabla w ]^{\top} - D(w\cdot \nabla \hat{\Lambda})\right)\,.
\end{eqnarray*}
As already noted, thanks to the transverse motion on the fluid domain, to the fact that the trace of $w$ on the moving boundary is colinear to $e_2$  and to the divergence free property of $w$, we have
$$
\int_{\mathcal F (t)} |D w|^{2} = \dfrac{1}{2}\int_{\mathcal F (t)} |\nabla w|^2\,.
$$

Consequently, we deduce that \eqref{egalite_vitesse_fluid:app} holds true for any regular enough couple $(w, b)$.  Even though all the terms in the final identity are well-defined for vector-fields $w$ satisfying \eqref{eq_regularitewq} (we already noticed in Remark \ref{rem:reg_sup} that with the regularity \eqref{eq_regularitewq} we have that $ t \mapsto \int_{\mathcal F(t)}|\nabla w |^2(\cdot,t)$ is continuous) we need here to get more precise on the meaning of boundary terms in integration by parts. Note in particular that, with \eqref{eq_regularitewq}, we only have 
$\partial_t w  \in L^2(\mathcal Q_t) $  so that its trace on $y=h(x,t)$ is not well-defined (though it is divergence-free).

\medskip

There are several ways to overcome this difficulty.
For instance, one may note that the computations above are completely rigorous if we assume further that  $\nabla \partial_t w \in L^2(\mathcal Q_t)$.  We may then end the proof by a density argument.
Indeed, reproducing computations in the proof of {Proposition \ref{prop_cdv}} we have that the regularity \eqref{eq_regulariteb}-\eqref{eq_regularitebsupp}, also satisfied by $h,$ ensures that
the transformation  $w \mapsto \hat{w}$ defined by:
$$
{w}(x,y,s)= B_h^{-\top}(x, \frac{ y}{ h(x,s)}) \, \hat{w}(x,  \frac{y}{ h(x,s)},s)  \quad (x, y,s)\in \mathcal Q_t,
$$
realizes an homeomorphism between the set of $w$ with regularity \eqref{eq_regularitewq}
and the set:
$$
\mathcal U := H^1_{\sharp}(\Omega_1 \times (0,T)) \cap L^2((0,T) ; H^2_{\sharp}(\Omega_1)).
$$
Furthermore this mapping exchanges the subset of $w$ and $\hat{w}$ whose (space)-gradients are square integrable on the time/space domain. As $\mathcal U \subset C([0,T] ; H^1_{\sharp}(\Omega_1)),$ we note also that if $\hat{w}_n$ converges to $\hat{w}$ in  $\mathcal U$ then $t \mapsto \int_{\mathcal{F}(t)} |\nabla w_n(\cdot,t)|^2$ converges to  $t \mapsto \int_{\mathcal{F}(t)} |\nabla w(\cdot,t)|^2$ in $C([0,T]).$
Consequently, we can pass to the limit in the identity \eqref{egalite_vitesse_fluid:app} for a sequence of $w_n$ such that the associated $\hat{w}_n$ converges in $\mathcal U$ to $\hat{w}.$

\medskip

We can then decompose any $\hat{w} \in \mathcal U$ 
into 
$$
\hat{w} = \hat{w}_0 + \tilde{U}[b] \qquad \text{ ( with $\tilde{U}$ defined in the proof of Proposition \ref{prop_uetoile} )}
$$
and approximate $b$ by considering its ($L^{2}$-)orthogonal projection on the first eigenmodes of the beam operator ($\partial_{xxxx}$) and $\hat{w}_0$ by considering its  ($L^2$-)orthogonal projection on the first eigenmodes of the Stokes operator with homogeneous Dirichlet boundary conditions on $z=0$ and $z=1$. 

\end{proof}


\begin{thebibliography}{100}

\bibitem{Vei04}  H.  Beir$\tilde{\hbox{a}}$o da Veiga, \textit{On the existence of strong solutions to a coupled fluid-structure
 evolution problem.}
 J. Math. Fluid Mech.  \textbf{6}, (2004),  no. 1, 21--52.

\bibitem{Bello} J.~A. Bello. \textit{$L^r$regularity for the Stokes and Navier-Stokes problems.} Ann. Mat. Pura Appl. \textbf{4}, (1996), 170:187--206.



\bibitem{Bou07} M.  Boulakia,  \textit{Existence of weak solutions for the three-dimensional motion of an
 elastic structure in an incompressible fluid.}
 J. Math. Fluid Mech.  \textbf{9 } (2007),  no. 2, 262--294.


%


\bibitem{BST12} M. Boulakia, E.  Schwindt, and T. Takahashi, \textit{Existence of strong solutions for the motion of an elastic structure in
 an incompressible viscous fluid.}
 Interfaces Free Bound. \textbf{ 14 } (2012),  no. 3, 273--306.

%
%


 \bibitem{CDEG05}  A. Chambolle, B. Desjardins,    M.  J. Esteban and C. Grandmont,  \textit{Existence of weak solutions for the unsteady interaction of a viscous
 fluid with an elastic plate.}
 J. Math. Fluid Mech. \textbf{ 7 } (2005),  no. 3, 368--404.







\bibitem{CSMT00}C. Conca,  J.~A. San Martin and M. Tucsnak, \textit{Existence of solutions for the equations modelling the motion of a
 rigid body in a viscous fluid.}
 Comm. Partial Differential Equations  \textbf{25 } (2000),  no. 5-6, 1019--1042.



\bibitem{Cou-Shk06}  D.  Coutand and S. Shkoller,  \textit{ The interaction between quasilinear elastodynamics and the
 Navier-Stokes equations.}
 Arch. Ration. Mech. Anal.  \textbf{179 } (2006),  no. 3, 303--352.


\bibitem{Cou-Shk05} D. Coutand  and S. Shkoller,  \textit{Motion of an elastic solid inside an incompressible viscous fluid.}
 Arch. Ration. Mech. Anal.  \textbf{ 176 } (2005),  no. 1, 25--102.



 \bibitem{Cum-Tak07} 
P. Cumsille and  T.  Takahashi,  \textit{Wellposedness for the system modelling the motion of a rigid body of
 arbitrary form in an incompressible viscous fluid.}
 Bol. Soc. Esp. Mat. Apl. SeMA  No. 41  (2007), 117--126.

\bibitem{De-Es00} B. Desjardins and  M. J.  Esteban,  \textit{On weak solutions for fluid-rigid structure interaction: compressible
 and incompressible models.}
 Comm. Partial Differential Equations  \textbf{25}  (2000),  no. 7-8, 1399--1413.


\bibitem{De-Es99} B. Desjardins and  M. J.  Esteban,  \textit{Existence of weak solutions for the motion of rigid bodies in a viscous
 fluid.}
 Arch. Ration. Mech. Anal. \textbf{ 146  }(1999),  no. 1, 59--71.

 \bibitem{DEGLT01} B. Desjardins,  M. J Esteban,  C. Grandmont, P. Le Tallec,  \textit{ Weak solutions for a fluid-elastic structure interaction model.}
 Rev. Mat. Complut.  14  (2001),  no. 2, 523--538.




%
%
%
%

\bibitem{FGNQ01} L.~Formaggia, J.-F.~Gerbeau,  F.~ Nobile, A. ~Quarteroni,
\newblock {\em On the coupling of 3D and
1D Navier–Stokes equations for flow problems in compliant vessels}.
\newblock{ Comput. Methods
Appl. Mech. Eng. }191(6--7), 561--582, 2001.

\bibitem{Feireisl03}
E.~Feireisl.
\newblock {\em On the motion of rigid bodies in a viscous incompressible fluid.}
\newblock {J. Evol. Equ.}, 3(3):419--441, 2003.
\newblock Dedicated to Philippe B\'enilan.


\bibitem{Galdi}
G.~P. Galdi.
\newblock {\em An introduction to the mathematical theory of the
  {N}avier-{S}tokes equations}.
\newblock Springer Monographs in Mathematics. Springer, New York, second
  edition, 2011.
\newblock Steady-state problems.

\bibitem{Ga-K09}  G. P.  Galdi and  M. Kyed,  \textit{ Steady flow of a Navier-Stokes liquid past an elastic body.
 Arch. Ration. Mech. Anal. } \textbf{ 194 } (2009),  no. 3, 849--875.


 \bibitem{GVH10} D. G\'erard-Varet and M. Hillairet. \textit{Regularity issues in the problem of fluid structure.}  Arch. Ration. Mech. Anal.  \textbf{195}  (2010),  no. 2, 375--407.

\bibitem{GVH12} M. G\'erard-Varet and M. Hillairet Computation of the drag force on a sphere close to a wall: the roughness issue.  ESAIM Math. Model. Numer. Anal.  46  (2012),  no. 5, 1201-1224.

\bibitem{GVH} D. G\'erard-Varet and M. Hillairet. \textit{Existence of weak solutions up to collision for viscous fluid-solid systems with slip}, preprint.


\bibitem{GVHW} D. G\'erard-Varet, M. Hillairet and C. Wang \textit{The influence of boundary conditions on the contact problem in a 3D Navier-Stokes Flow}, preprint.


 \bibitem{GGCC09} G. Guidoboni,  R. Glowinski,   N. Cavallini and S. \v{C}ani\'{c}, \textit{ Stable loosely-coupled-type algorithm for fluid-structure interaction
 in blood flow.} J. Comput. Phys.  \textbf{228 } (2009),  no. 18, 6916--6937.


\bibitem{Grandmontpp}
C.~Grandmont.
\newblock On an unsteady fluid-beam interaction problem.
\newblock preprint 2004

\bibitem{G08}  C.~Grandmont, \textit{Existence of weak solutions for the unsteady interaction of a viscous
 fluid with an elastic plate.}
 SIAM J. Math. Anal. \textbf{ 40}  (2008),  no. 2, 716-737.

 \bibitem{G02} C.~Grandmont,  \textit{Existence for a three-dimensional steady state fluid--structure
 interaction problem.}
 J. Math. Fluid Mech. \textbf{ 4  }(2002),  no. 1, 76-94.

 \bibitem{G98}  C.~Grandmont,  \textit{Existence et unicit\'e de solutions d'un probl\`eme de couplage
 fluide-structure bidimensionnel stationnaire.(French)  [Existence and uniqueness for a two-dimensional steady-state
 fluid-structure interaction problem] } C. R. Acad. Sci. Paris S\'er. I Math. \textbf{  326  }(1998),  no. 5, 651-656.

\bibitem{GM00} C.~Grandmont and Y.~Maday,  \textit{Existence for an unsteady fluid-structure interaction problem.}
 M2AN Math. Model. Numer. Anal. \textbf{  34 } (2000),  no. 3, 609--636.



\bibitem{GrubbSolonnikov}
G. Grubb and V. A. Solonnikov, 
\newblock Boundary value problems for the nonstationary Navier- Stokes equations treated by pseudo-differential methods,
\newblock {\em Math. Scand.}, 69 (1991): 217--290, 1992 .


\bibitem{HES}
T.~I. Hesla.
\newblock Collision of smooth bodies in a viscous fluid: {A} mathematical investigation.
\newblock 2005.
\newblock PhD Thesis - Minnesota.



\bibitem{Hil07} M. Hillairet. \textit{Lack of collision between solid bodies in a 2D incompressible viscous flow}. Comm. Partial Differential Equations, \textbf{32} (2007), 1345--1371.

\bibitem{HT10} M.  Hillairet and  T. Takahashi, \textit{Blow up and grazing collision in viscous fluid solid interaction
 systems.}
 Ann. Inst. H. Poincar\'e Anal. Non Lin\'eaire  \textbf{27 } (2010),  no. 1, 291--313. 


\bibitem{HT09} M. Hillairet  and    T. Takahashi,  \textit{Collisions in three-dimensional fluid structure interaction
 problems.}
 SIAM J. Math. Anal.  \textbf{40}  (2009),  no. 6, 2451--2477. 

\bibitem{Ho-St99} K.-H. Hoffmann and V.~N. Starovoitov \textit{  On a motion of a solid body in a viscous fluid. Two-dimensional
 case.}
 Adv. Math. Sci. Appl. \textbf{ 9 } (1999),  no. 2, 633--648.


\bibitem{HOS1}
K.-H. Hoffmann and V.~N. Starovoitov. (MR1739269)
\newblock Zur Bewegung einer Kugel in einer z�hen Fl�ssigkeit. (German)
[On the motion of a sphere in a viscous fluid]
\newblock {\em Doc. Math.}, {\bf 5 }, 15--21, 2000.



\bibitem{Kuk-Tu12}  I. Kukavica, and A. Tuffaha, \textit{ Well-posedness for the compressible Navier-Stokes-Lam\'e system with a
 free interface.}
 Nonlinearity  \textbf{25}  (2012),  no. 11, 3111--3137.

\bibitem{Yu}
 N.~V. Judakov.(MR0464811)
\newblock The solvability of the problem of the motion of a rigid body in
a viscous incompressible fluid. (Russian)
\newblock {\em Dinamika Splo\v sn. Sredy Vyp. }
1974; {\bf 18} , 249--253.



\bibitem{Lealbook}
L.~G. Leal.
\newblock {\em Advanced transport phenomena}.
\newblock Cambridge Series in Chemical Engineering. Cambridge University Press,
  Cambridge, 2007.
\newblock Fluid mechanics and convective transport processes.

  \bibitem{Len-Ru} D. Lengeler and  M. R$\mathring{\text{u}}$\v{z}i\v{c}ka \newblock  \textit{Global weak solutions for an incompressible Newtonian fluid interacting with a linearly elastic Koiter shell}, 
\newblock Arch. Ration. Mech. Anal. 
 \textbf{211 } (2014),  no. 1, 205--255.

\bibitem{Lequeurre13} J. Lequeurre,  \textit{ Existence of Strong Solutions for a System Coupling the Navier�Stokes
 Equations and a Damped Wave Equation.}
 J. Math. Fluid Mech.  \textbf{15 } (2013),  no. 2, 249--271.


\bibitem{Lequeurre11}   J.  Lequeurre, \newblock {\textit{ Existence of strong solutions to a fluid-structure system.}}
\newblock SIAM J. Math. Anal.   \textbf{43}  (2011),  no. 1, 389--410.

\bibitem{LM72}
J.-L. Lions and E. Magenes,   \newblock \textit{ Non-homogeneous boundary value problems and applications. Vol. I.}
\newblock  Springer-Verlag, New York-Heidelberg,  1972. 

\bibitem{MC13} B. Muha and S. \v{C}ani\'c.
\newblock \textit{Existence of a weak solution to a nonlinear fluid-structure interaction problem modeling the flow of an incompressible, viscous fluid in a cylinder with deformable walls.}
\newblock Arch. Ration. Mech. Anal. \textbf{207 }(2013), no. 3, 919--968.

\bibitem{QTV00}
A.~ Quarteroni, M.~Tuveri, A.~ Veneziani
\newblock{\em Computational vascular fluid dynamics: problems, models and methods}
\newblock{Computing and Visualization in Science}, 2 (4), 163--197, 2000.


\bibitem{RV14}  J.-P. Raymond and  M. Vanninathan,
\newblock \textit{A fluid-structure model coupling the Navier-Stokes equations and the
 Lamé system.}
\newblock  J. Math. Pures Appl. (9) \textbf{ 102 } (2014),  no. 3, 546--596.


 \bibitem{SM-St-T02} J.-A. San Martin, V. Starovoitov and M.  Tucsnak,
\newblock \textit{ Global weak solutions for the two-dimensional motion of several rigid
 bodies in an incompressible viscous fluid.}
 \newblock Arch. Ration. Mech. Anal.  \textbf{161}  (2002),  no. 2, 113--147.



\bibitem{Se87} D. Serre,  \textit{Chute libre d'un solide dans un fluide visqueux incompressible.
 Existence.
(French)  [Free fall of a rigid body in an incompressible viscous fluid.
 Existence] } Japan J. Appl. Math. \textbf{ 4 } (1987),  no. 1, 99--110.


\bibitem{Sta07} V. N. Starovoitov. 
\newblock\textit{Behavior of a rigid body in an incompressible viscous fluid near a boundary.} 
\newblock In Free boundary problems (Trento, 2002), volume 147 of Internat. Ser. Numer. Math., pages 313--327. Birkhauser, Basel, 2004.


\bibitem{Ta-Tu04} T. Takahashi  and M. Tucsnak,  \newblock
\textit{Global strong solutions for the two-dimensional motion of an infinite
 cylinder in a viscous fluid.}
 \newblock J. Math. Fluid Mech.  \textbf{6 } (2004),  no. 1, 53--77.

\bibitem{Ta03} T. Takahashi,  \textit{Analysis of strong solutions for the equations modeling the motion of a
 rigid-fluid system in a bounded domain.}
 Adv. Differential Equations   \textbf{8  }(2003),  no. 12, 1499--1532.


\bibitem{tartar} L. Tartar, \textit{An abstract regularity theorem.} Note 87.9 CMU, 1989, 1970.











\end{thebibliography}
\end{document}